\numberwithin{equation}{section}
\def\XXint#1#2#3{{\setbox0=\hbox{$#1{#2#3}{\int}$}
     \vcenter{\hbox{$#2#3$}}\kern-.5\wd0}}
\newtheorem{theorem}{Theorem}[section]
\newtheorem{lemma}[theorem]{Lemma}
\newtheorem{corollary}[theorem]{Corollary}
\newtheorem{proposition}[theorem]{Proposition}
\newtheorem{definition}[theorem]{Definition}
\theoremstyle{remark}
\begin{document}
\allowdisplaybreaks

\title[Riesz Transform Characterizations of $H^1$ and {\rm BMO}]
{Riesz Transform Characterizations of $H^1$ and {\rm BMO} on Ahlfors Regular Sets with Small Oscillations}

\author{Dorina Mitrea}
\address{Dorina Mitrea
\\
Department of Mathematics
\\
Baylor University
\\
Sid Richardson Bldg., 1410 S.~4th Street
\\
Waco, TX 76706, USA} \email{Dorina\_\,Mitrea@baylor.edu}

\author{Irina Mitrea}
\address{Irina Mitrea
\\
Department of Mathematics
\\
Temple University\!
\\
1805\,N.\,Broad\,Street
\\
Philadelphia, PA 19122, USA} \email{imitrea@temple.edu}

\author{Marius Mitrea\\ }
\address{Marius Mitrea
\\
Department of Mathematics
\\
Baylor University
\\
Sid Richardson Bldg., 1410 S.~4th Street
\\
Waco, TX 76706, USA} \email{Marius\_\,Mitrea@baylor.edu}

\thanks{The first author has been supported in part by Simons Foundation grant $\#\,$958374. 
The second author has been supported in part by Simons Foundation grant $\#\,$00002820.
The third author has been supported in part by the Simons Foundation grant $\#\,$637481.}


\subjclass[2010]{Primary 42B20, 42B30, 42B35; Secondary 15A66, 42B25, 42B37.}

\keywords{Riesz transform, Hardy space, {\rm BMO}, {\rm VMO}, 
Ahlfors regular set, uniformly rectifiable set, infinitesimally flat {\rm AR} domain, 
Ahlfors regular domain, {\rm UR} domain, nontangential maximal function, Clifford algebra, Dirac operator, 
Cauchy-Clifford integral operator, boundary value problems.}

\begin{abstract}
We employ the Riesz transform as a means for describing geometric properties of sets in ${\mathbb{R}}^n$,
and study the extent to which they can be used to characterize function spaces defined on said sets.
In particular, characterizations of the end-point spaces on the Lebesgue scale $L^p$ with $1<p<\infty$, 
namely the Hardy space $H^1$ and the John-Nirenberg space {\rm BMO}, are produced in terms of the 
Riesz transforms on Ahlfors regular sets in ${\mathbb{R}}^n$ with small oscillations (quantified in terms 
of the {\rm BMO} nature of the outward unit normal). These generalize the celebrated results of 
C.~Fefferman and E.~Stein in the flat Euclidean setting. 
\end{abstract}

\maketitle

\allowdisplaybreaks

\centerline{To appear in Annales de L'Institut Fourier}

\section{Introduction}
\label{S-1}

The conventional wisdom is that the Riesz transforms are sensitive to regularity (broadly understood), 
in a retrievable manner. This feature makes the Riesz transforms useful tools both for studying the geometry 
of subsets of the Euclidean ambient and for characterizing the regularity of functions defined on said sets. 

To illustrate this philosophy, let us first elaborate on the role the Riesz transforms play in connection with 
geometric measure theory. Bring in a brand of Riesz transforms whose definition places less stringent demands on 
the underlying set. Specifically, given a closed Ahlfors regular set $\Sigma\subseteq{\mathbb{R}}^n$ along with some 
exponent $\alpha\in(0,1)$, denote by ${\mathscr{C}}_c^{\alpha}(\Sigma)$ the space of H\"older functions of order 
$\alpha$ with compact support in $\Sigma$. This is a Banach space, and we let  $\big({\mathscr{C}}_c^{\alpha}(\Sigma)\big)^\ast$ 
stand for its dual. Then, for each $j\in\{1,\dots,n\}$, one defines the $j$-th distributional Riesz transform as the operator 
\begin{equation}\label{yrf56f-RRR}
{\mathbb{R}}_j:{\mathscr{C}}_c^{\alpha}(\Sigma)\longrightarrow\big({\mathscr{C}}_c^{\alpha}(\Sigma)\big)^\ast
\end{equation}
with the property that for every $f,g\in{\mathscr{C}}_c^{\alpha}(\Sigma)$ one has
\begin{equation}\label{yrf56f-RRR.1}
\big\langle{\mathbb{R}}_jf,g\big\rangle=\frac{1}{\omega_{n-1}}\int_{\Sigma}\int_{\Sigma}
\frac{x_j-y_j}{|x-y|^{n}}\big[f(y)g(x)-f(x)g(y)\big]\,d{\mathcal{H}}^{\,n-1}(y)\,d{\mathcal{H}}^{\,n-1}(x)
\end{equation}
where, in this context, $\langle\cdot,\cdot\rangle$ stands for the natural pairing between the spaces 
$\big({\mathscr{C}}_c^{\,\alpha}(\Sigma)\big)^\ast$ and ${\mathscr{C}}_c^{\,\alpha}(\Sigma)$.

This variety of Riesz transforms turns out to encode a surprising amount of geometric information, 
and may be used to characterize the regularity of the underlying set. One example stems from the work 
of F.~Nazarov, X.~Tolsa, A.~Volberg from \cite{NTV}. In terms of the Riesz transforms \eqref{yrf56f-RRR}-\eqref{yrf56f-RRR.1}, 
the main result in \cite{NTV} may be rephrased as follows: under the background assumption that $\Sigma$ is an Ahlfors 
regular subset of ${\mathbb{R}}^n$ and with $\sigma:={\mathcal{H}}^{\,n-1}\lfloor\Sigma$, one has
\begin{equation}\label{Mabb88}
\parbox{9.40cm}{$\Sigma$ is a {\rm UR} (aka, uniformly rectifiable; cf. Definition~\ref{Def-unif.rect}) set if and only if 
${\mathbb{R}}_j1\in{\rm BMO}(\Sigma,\sigma)$ for every $j\in\{1,\dots,n\}$.}
\end{equation}
Hence, within the class of Ahlfors regular subsets of ${\mathbb{R}}^n$, the lone membership 
of the ${\mathbb{R}}_j1$'s to the John-Nirenberg space ${\rm BMO}$ characterizes uniform 
rectifiability (this refines earlier work of G.~David and S.~Semmes; cf. \cite{DS1991}).

As another example, we have the following theorem:

\begin{theorem}\label{TTYTaf.}
Let $\Omega\subseteq\mathbb{R}^n$ be an Ahlfors regular domain with compact boundary. 
Then for each $\alpha\in(0,1)$ the following claims are equivalent:
\begin{enumerate}
\item[{\rm (a)}] $\Omega$ is a domain of class ${\mathscr{C}}^{1,\alpha}$
{\rm (}aka Lyapunov domain of order $\alpha${\rm )};
\item[{\rm (b)}] the distributional Riesz transforms, defined as in 
\eqref{yrf56f-RRR}-\eqref{yrf56f-RRR.1} with $\Sigma:=\partial\Omega$, satisfy
\begin{equation}\label{eq:RIESZ33}
{\mathbb{R}}_j1\in{\mathscr{C}}^{\alpha}(\partial\Omega)\,\text{ for }\,1\leq j\leq n.
\end{equation}
\end{enumerate}
\end{theorem}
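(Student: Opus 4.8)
The plan is to establish the equivalence by treating each implication separately, with the passage from Riesz transform regularity to geometric regularity being the substantive direction.

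\medskip

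\noindent\textbf{Proof strategy for (a)$\Rightarrow$(b).}
Assume $\Omega$ is a ${\mathscr{C}}^{1,\alpha}$ domain with compact boundary. The plan is to pass from the distributional Riesz transforms to the genuine (principal-value) Riesz transforms acting on $1$. On a Lyapunov domain the boundary is locally a ${\mathscr{C}}^{1,\alpha}$ graph, so the kernel $\frac{x_j-y_j}{|x-y|^n}$ enjoys the standard cancellation estimates that make the principal-value singular integral operator
$$
\big(R_jf\big)(x)=\frac{1}{\omega_{n-1}}\lim_{\varepsilon\to0^+}\int_{\substack{y\in\partial\Omega\\ |x-y|>\varepsilon}}\frac{x_j-y_j}{|x-y|^{n}}\,f(y)\,d{\mathcal{H}}^{\,n-1}(y)
$$
well-defined and bounded on H\"older spaces; one checks that in the sense of the pairing \eqref{yrf56f-RRR.1} the distributional object ${\mathbb{R}}_j1$ coincides with $R_j1$. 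Then a direct estimate on the ${\mathscr{C}}^{1,\alpha}$ graph, splitting the integral defining $R_j1(x)-R_j1(x')$ into a near part (radius $\sim|x-x'|$, controlled using the H\"older modulus of the normal and the size of the kernel) and a far part (controlled by a gradient bound on the kernel), yields the H\"older estimate \eqref{eq:RIESZ33}. This part is technical but standard.

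\medskip

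\noindent\textbf{Proof strategy for (b)$\Rightarrow$(a).}
This is where the main obstacle lies. Assume ${\mathbb{R}}_j1\in{\mathscr{C}}^{\alpha}(\partial\Omega)$ for all $j$. Since ${\mathscr{C}}^{\alpha}(\partial\Omega)\subseteq{\rm BMO}(\partial\Omega,\sigma)$, the characterization \eqref{Mabb88} immediately gives that $\partial\Omega$ is a {\rm UR} set, so $\Omega$ is in particular a {\rm UR} domain; this is the qualitative scaffold. The upgrade from {\rm UR} to ${\mathscr{C}}^{1,\alpha}$ hinges on identifying the boundary trace of the Cauchy--Clifford integral operator and relating the Riesz transforms to the outward unit normal $\nu$. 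The key algebraic identity, valid on {\rm UR} domains, expresses $\nu$ (multiplied into the Clifford setting) in terms of a combination of the operators $R_j$ applied to the constant $1$ together with the boundary-to-boundary Cauchy--Clifford operator, via a Plemelj-type jump relation; schematically, the principal-value Riesz transforms recover $\nu_j/2$ plus a smoothing remainder, so the hypothesis ${\mathbb{R}}_j1\in{\mathscr{C}}^{\alpha}$ forces $\nu_j\in{\mathscr{C}}^{\alpha}(\partial\Omega)$ for each $j$. Once one knows the outward unit normal is H\"older continuous of order $\alpha$ and $\partial\Omega$ is already rectifiable (indeed {\rm UR}, hence a set of locally finite perimeter with well-behaved blow-ups), a standard argument representing $\partial\Omega$ locally as a graph and integrating the normal shows the graph is ${\mathscr{C}}^{1,\alpha}$, i.e. $\Omega$ is a Lyapunov domain of order $\alpha$.

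\medskip

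\noindent\textbf{Expected main difficulty.}
The crux is making rigorous the identity relating ${\mathbb{R}}_j1$ to $\nu_j$ on a merely {\rm UR} domain (before one knows any smoothness), and then bootstrapping: a priori the jump-relation remainder term is only known to be as regular as the Cauchy--Clifford operator permits on a {\rm UR} domain, so one must run a self-improvement scheme in which the gained H\"older regularity of $\nu$ feeds back into the mapping properties of the singular integral operators to close the loop. Controlling this interplay — and ensuring the geometric reconstruction of the ${\mathscr{C}}^{1,\alpha}$ graph from a H\"older-continuous unit normal is quantitatively valid with the right exponent — is the heart of the proof.
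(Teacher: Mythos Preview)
The paper does not prove this theorem from scratch: its entire proof consists of citing \cite[Theorem~1.1]{MMV} and observing, via \cite[(5.10.52), p.\,467]{GHA.I}, that the extra topological hypothesis $\partial(\overline{\Omega})=\partial\Omega$ imposed in \cite{MMV} is automatically satisfied for Ahlfors regular domains. Your proposal, by contrast, sketches the substance of what \cite{MMV} actually does --- the Clifford--Cauchy jump machinery, extraction of $\nu$ from the Riesz data, and a bootstrap from {\rm UR} to ${\mathscr{C}}^{1,\alpha}$ --- so in spirit you are reproducing the cited argument rather than the paper's one-line reduction.

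That said, one step in your outline deserves a warning. The heuristic ``the principal-value Riesz transforms recover $\nu_j/2$ plus a smoothing remainder'' is not available on a bare {\rm UR} domain: the boundary-to-boundary Cauchy--Clifford operator has no a priori smoothing there, so one cannot simply peel off $\nu_j$ from $R_j1$. What \cite{MMV} actually does is more delicate --- it uses identities of the type $\mathfrak{C}\nu$ and $\nu\odot\mathfrak{C}$ together with the reproducing property $\mathfrak{C}^2=\tfrac14 I$, plus a quantitative self-improvement scheme (via commutator estimates and flatness at small scales) to close the loop. You correctly flag this as the main difficulty, but your phrasing suggests a direct jump-formula extraction that is not available; the honest route is the iterative one. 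If you intend to present this as a self-contained proof rather than a citation, that iteration is where essentially all the work lies.
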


\begin{proof}
This has been proved in \cite[Theorem~1.1]{MMV} under the additional topological assumption that 
$\partial(\,\overline{\Omega}\,)=\partial\Omega$ which, as seen in \cite[(5.10.52), p.\,467]{GHA.I}, 
is superfluous since $\Omega$ is an Ahlfors regular domain to begin with. 
\end{proof}

It has been shown in \cite{HoMiTa07} that if $\Omega\subseteq\mathbb{R}^n$ is an Ahlfors regular domain with compact boundary, 
then $\Omega$ being a ${\mathscr{C}}^1$ domain is equivalent with $\nu\in\big[{\mathscr{C}}^0(\partial\Omega)\big]^n$
(where $\nu$ denotes the geometric measure theoretic outward unit normal to $\Omega$). This being said, the limiting case
$\alpha=0$ of the equivalence {\rm (a)}$\Leftrightarrow${\rm (b)} in Theorem~\ref{TTYTaf.} requires replacing the space of continuous
functions by the (larger) Sarason space ${\rm VMO}$, of functions of vanishing mean oscillations on $\partial\Omega$, 
viewed as a space of homogeneous type, in the sense of Coifman-Weiss, when equipped with the measure 
$\sigma:={\mathcal{H}}^{\,n-1}\lfloor\partial\Omega$ and the Euclidean distance). Specifically, we have the following result:

\begin{theorem}\label{TYTFfa.668}
Let $\Omega\subseteq\mathbb{R}^n$ be an Ahlfors regular domain with compact boundary. 
Abbreviate $\sigma:={\mathcal{H}}^{\,n-1}\lfloor\partial\Omega$ and denote by $\nu$ the 
geometric measure theoretic outward unit normal to $\Omega$. Then 
\begin{equation}\label{eq:iugf.6r4}
\nu\in\big[{\rm VMO}(\partial\Omega,\sigma)\big]^n
\Longleftrightarrow{\mathbb{R}}_j1\in{\rm VMO}(\partial\Omega,\sigma)\,\text{ for }\,1\leq j\leq n.
\end{equation}
\end{theorem}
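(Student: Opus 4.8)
The plan is to derive the equivalence in \eqref{eq:iugf.6r4} from a quantitative identity relating $\mathbb{R}_j1$ to the outward unit normal $\nu$, obtained via the Cauchy–Clifford integral operator. The key algebraic fact (available for Ahlfors regular domains with compact boundary from the Clifford-algebra machinery referenced in the keywords, e.g.\ \cite{GHA.I}) is that, at the level of principal-value boundary singular integrals, the action of the full Riesz-transform vector $\mathbb{R}=(\mathbb{R}_1,\dots,\mathbb{R}_n)$ on the constant function $1$ is governed by the boundary Cauchy–Clifford operator $C$ applied to $1$, and there is a pointwise (a.e.\ on $\partial\Omega$) relation of the schematic form $C1 = \tfrac12\,\nu + (\text{combinations of }\mathbb{R}_j1)$, with the Clifford multiplication encoding how the scalar and vector parts pair up. Concretely, I would first record that $\mathbb{R}_j1$ is a well-defined element of $\mathrm{BMO}(\partial\Omega,\sigma)$ for every $j$ — this is exactly \eqref{Mabb88}, since an Ahlfors regular domain has a UR boundary — so both sides of \eqref{eq:iugf.6r4} live inside $\mathrm{BMO}(\partial\Omega,\sigma)$ and it makes sense to ask whether they lie in the closed subspace $\mathrm{VMO}(\partial\Omega,\sigma)$.

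For the implication ``$\nu\in[\mathrm{VMO}]^n\Rightarrow\mathbb{R}_j1\in\mathrm{VMO}$'', the strategy is: when $\nu\in[\mathrm{VMO}(\partial\Omega,\sigma)]^n$ the domain has small local oscillations, which by the commutator technique of Coifman–Rochberg–Weiss (in the variable-coefficient form adapted to chord-arc/UR settings) forces the Cauchy–Clifford operator $C$, and more generally the principal-value Riesz transforms $R_j$, to be ``nearly'' bounded with small norm on oscillation scales; one then writes $\mathbb{R}_j1 = R_j^{\mathrm{t}}1$ (transpose convention matching \eqref{yrf56f-RRR.1}) and runs a Neumann-series / approximation argument showing $\mathbb{R}_j1$ can be approximated in $\mathrm{BMO}$-norm by functions in $\mathrm{VMO}$ (e.g.\ by approximating $\nu$ by continuous functions and using that on $\mathscr{C}^1$-like pieces the Riesz transforms of $1$ are continuous, via the $\alpha\downarrow 0$ reading of Theorem~\ref{TTYTaf.} together with the David–Jerison/Hofmann–Mitrea–Taylor $\mathscr{C}^1$ criterion quoted after Theorem~\ref{TTYTaf.}). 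The reverse implication ``$\mathbb{R}_j1\in\mathrm{VMO}\;\forall j\Rightarrow\nu\in[\mathrm{VMO}]^n$'' is where one uses the identity the other way: solving the schematic relation above for $\nu$ expresses each component $\nu_k$ as a finite linear combination of the $\mathbb{R}_j1$'s plus terms of the form $C$ applied to combinations of $\mathbb{R}_j1$'s; since $\mathrm{VMO}(\partial\Omega,\sigma)$ is a module over the algebra of operators preserving it (Cauchy–Clifford and the $R_j$ map $\mathrm{VMO}$ to $\mathrm{VMO}$ on UR domains, another fact from the Clifford-analysis toolbox), membership of all $\mathbb{R}_j1$ in $\mathrm{VMO}$ propagates to $\nu$.

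The step I expect to be the main obstacle is establishing the precise pointwise/operator identity linking $\{\mathbb{R}_j1\}_{j=1}^n$ to $\nu$ via the Cauchy–Clifford operator with \emph{fully controlled} error terms — i.e.\ making rigorous the schematic ``$C1=\tfrac12\nu+\sum_j e_j\,\mathbb{R}_j1$'' and, crucially, its solvability for $\nu$ modulo operators known to preserve $\mathrm{VMO}$. This requires carefully tracking the Clifford multiplication structure and the jump-relations for the Cauchy–Clifford singular integral on a merely Ahlfors regular (UR) boundary, where one does not have pointwise normal traces in the classical sense and must work with the geometric-measure-theoretic normal and $\sigma$-a.e.\ identities. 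A secondary technical point is verifying that $\mathrm{VMO}(\partial\Omega,\sigma)$ — defined for the space of homogeneous type $(\partial\Omega,|\cdot-\cdot|,\sigma)$ in the Coifman–Weiss sense — is genuinely invariant under the Cauchy–Clifford operator and the $R_j$'s; this is a qualitative strengthening of the $\mathrm{BMO}$-boundedness in \eqref{Mabb88} and is the mechanism that lets the equivalence close, so I would isolate it as a lemma (using that $\mathscr{C}_c^\infty$-type functions, or continuous functions with controlled modulus, are dense in $\mathrm{VMO}$ and that these operators send such nice inputs to continuous outputs on UR sets, again invoking the $\mathscr{C}^1 \Leftrightarrow \nu\in[\mathscr{C}^0]^n$ characterization).
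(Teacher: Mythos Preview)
The paper's own proof is not an argument at all: it simply records that the equivalence is obtained by combining \cite[Theorem~1.4]{MMV} with Theorem~\ref{6trrf.TT.ccc.WACO.222.NEW.AR}. The latter is only needed to certify that the hypothesis $\nu\in[{\rm VMO}(\partial\Omega,\sigma)]^n$ already forces $\Omega$ to be a two-sided {\rm NTA} (hence {\rm UR}) domain, which is the geometric backdrop under which the result from \cite{MMV} is stated. So there is nothing to compare at the level of ideas; your proposal is an attempt to reconstruct the content of \cite[Theorem~1.4]{MMV} rather than an alternative to anything written in this paper.

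That said, your sketch has two genuine problems. First, the sentence ``an Ahlfors regular domain has a {\rm UR} boundary'' is false, and you invoke it precisely where you cannot: \eqref{Mabb88} is the statement that uniform rectifiability is \emph{equivalent} to $\mathbb{R}_j1\in{\rm BMO}$, so you cannot assert $\mathbb{R}_j1\in{\rm BMO}$ a priori. This is easily repaired, since in each implication the hypothesis supplies {\rm UR} (either via Theorem~\ref{6trrf.TT.ccc.WACO.222.NEW.AR} when $\nu\in[{\rm VMO}]^n$, or via \eqref{Mabb88} when $\mathbb{R}_j1\in{\rm VMO}\subset{\rm BMO}$), but as written it is an error.

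Second, and more seriously, your mechanism for the implication ``$\mathbb{R}_j1\in{\rm VMO}\ \forall j\Rightarrow\nu\in[{\rm VMO}]^n$'' is circular. You propose to solve your schematic identity for $\nu$ and then use that the Cauchy--Clifford operator ${\mathfrak{C}}$ preserves ${\rm VMO}$. But on a bare {\rm UR} boundary, the $T1$-type criterion for a Calder\'on--Zygmund operator to map ${\rm VMO}$ to ${\rm VMO}$ is exactly that ${\mathfrak{C}}1$ (and its transpose on $1$) lie in ${\rm VMO}$; and ${\mathfrak{C}}1$ is built from $\nu$ via Clifford multiplication, which is what you are trying to prove belongs to ${\rm VMO}$. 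The argument in \cite{MMV} breaks this circle by exploiting the algebraic identity ${\mathfrak{C}}^{\#}{\mathfrak{C}}^{\#}=\tfrac{1}{4}I$ (valid on {\rm UR} boundaries) together with the specific way $\nu$ enters the kernel of ${\mathfrak{C}}^{\#}$, rather than by appealing to a generic ``${\rm VMO}$ is a module'' statement. Your proposal does not indicate how you would avoid this circularity, and the lemma you say you would isolate (that $R_j$ and ${\mathfrak{C}}$ send ${\rm VMO}$ to ${\rm VMO}$ on {\rm UR} sets) is in general not available without first knowing something about $\nu$.
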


\begin{proof}
This follows by combining \cite[Theorem~1.4, p.\,107]{MMV} with Theorem~\ref{6trrf.TT.ccc.WACO.222.NEW.AR}.
 \end{proof}

The class of Ahlfors regular domains with compact boundaries and whose geometric measure theoretic outward unit normals have vanishing mean oscillations
has been introduced in \cite[Definition~3.4.1, p.\,158]{GHA.V}, where the label infinitesimally flat {\rm AR} domain has been used; 
see Definition~\ref{t7aab-RFCa.WACO.1}. As remarked in \cite[(3.4.4), p.\,158]{GHA.V}, 
\begin{equation}\label{eq:sffs.WACO.D+M}
\parbox{9.50cm}{\it the class of infinitesimally flat {\rm AR} domains should be thought of as a sharp 
{\rm (}i.e., significantly more inclusive, yet exhibiting similar properties{\rm )} geometric measure 
theoretic version of the class of ${\mathscr{C}}^1$ domains with compact boundary.}
\end{equation}
In addition to bounded ${\mathscr{C}}^1$ domains, examples of infinitesimally flat {\rm AR} domains include planar chord-arc 
domains with vanishing constant and bounded ${\rm VMO}_1$ domains (cf. the discussion on \cite[pp.\,161--173]{GHA.V}).
Also, work in \cite{HoMiTa07} shows that the class of infinitesimally flat {\rm AR} domains is invariant under 
${\mathscr{C}}^1$ diffeomorphisms. 

In view of this discussion, the equivalence in \eqref{eq:iugf.6r4} gives the following result, which may be thought of 
as a version of \eqref{Mabb88} involving more regularity:

\begin{corollary}\label{YRYfuHHJH}
If $\Omega\subseteq\mathbb{R}^n$ is an open set with a compact 
Ahlfors regular boundary and if one sets $\sigma:={\mathcal{H}}^{\,n-1}\lfloor\partial\Omega$ then
\begin{equation}\label{eq:rDC}
\begin{array}{c}
\text{${\mathbb{R}}_j1\in{\rm VMO}(\partial\Omega,\sigma)$ for all $j\in\{1,\dots,n\}$}
\\[4pt]
\text{if and only if $\Omega$ is an infinitesimally flat {\rm AR} domain}.
\end{array}
\end{equation}
\end{corollary}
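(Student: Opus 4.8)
The plan is to extract both implications from Theorem~\ref{TYTFfa.668}, keeping in mind that---as recorded in the discussion preceding the corollary and codified in Definition~\ref{t7aab-RFCa.WACO.1}---$\Omega$ is an infinitesimally flat {\rm AR} domain exactly when $\Omega$ is an Ahlfors regular domain with compact boundary whose geometric measure theoretic outward unit normal $\nu$ belongs to $\big[{\rm VMO}(\partial\Omega,\sigma)\big]^n$. With this in hand, the implication ``$\Omega$ infinitesimally flat {\rm AR} domain $\Longrightarrow$ ${\mathbb{R}}_j1\in{\rm VMO}(\partial\Omega,\sigma)$ for $1\leq j\leq n$'' is immediate: such an $\Omega$ satisfies the hypotheses of Theorem~\ref{TYTFfa.668} and has $\nu\in\big[{\rm VMO}(\partial\Omega,\sigma)\big]^n$, so \eqref{eq:iugf.6r4} yields the stated membership of the ${\mathbb{R}}_j1$'s.

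The content lies in the converse, where the difficulty is that $\Omega$ is only assumed to be an open set with compact Ahlfors regular boundary---not, a priori, an Ahlfors regular domain---so that Theorem~\ref{TYTFfa.668} is not directly applicable. Starting from ${\mathbb{R}}_j1\in{\rm VMO}(\partial\Omega,\sigma)$ for all $j$, one first uses ${\rm VMO}(\partial\Omega,\sigma)\subseteq{\rm BMO}(\partial\Omega,\sigma)$ together with \eqref{Mabb88} to conclude that $\partial\Omega$ is uniformly rectifiable. The crux is then to upgrade this to the assertion that $\Omega$ is genuinely an Ahlfors regular domain: namely, that $\Omega$ is a set of locally finite perimeter and ${\mathcal{H}}^{\,n-1}$-a.e.\ point of $\partial\Omega$ is a point at which the measure theoretic outward unit normal to $\Omega$ exists (equivalently, $\sigma$ is comparable to the perimeter measure of $\Omega$, so in particular $\nu$ is well defined $\sigma$-a.e.). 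This is precisely the type of conclusion that Theorem~\ref{6trrf.TT.ccc.WACO.222.NEW.AR} (together with the basic theory of Ahlfors regular domains) is built to supply, and it is here that one must use the vanishing---rather than merely the boundedness---of the mean oscillations of the ${\mathbb{R}}_j1$'s: heuristically, the ${\rm VMO}$ hypothesis compels the Cauchy--Clifford integral operator attached to $\partial\Omega$ to be idempotent up to errors that disappear at small scales, a rigidity incompatible with $\partial\Omega$ carrying portions that lie off its reduced boundary. Once $\Omega$ is known to be an Ahlfors regular domain with compact boundary, Theorem~\ref{TYTFfa.668} applies and gives $\nu\in\big[{\rm VMO}(\partial\Omega,\sigma)\big]^n$; by Definition~\ref{t7aab-RFCa.WACO.1} this is exactly the statement that $\Omega$ is an infinitesimally flat {\rm AR} domain, completing the proof.

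The one genuine obstacle is thus the passage from ``open set with compact Ahlfors regular boundary'' to ``Ahlfors regular domain''. That it cannot be had from the ${\rm BMO}$ membership of the ${\mathbb{R}}_j1$'s alone is transparent from slit configurations such as $\Omega:=\mathbb{R}^n\setminus\overline{D}$ with $D$ a flat open $(n-1)$-dimensional disk: then $\partial\Omega=\overline{D}$ is (uniformly) rectifiable and each ${\mathbb{R}}_j1$ lies in ${\rm BMO}(\partial\Omega,\sigma)$, yet $\chi_\Omega=\chi_{\mathbb{R}^n}$ almost everywhere, so $\Omega$ has vanishing perimeter and is not an Ahlfors regular domain. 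In agreement with the corollary, for such $\Omega$ the functions ${\mathbb{R}}_j1$ develop a logarithmic singularity as one approaches the rim of $D$ and hence are \emph{not} in ${\rm VMO}(\partial\Omega,\sigma)$; so the gap that the ${\rm VMO}$ hypothesis must (and, through Theorem~\ref{6trrf.TT.ccc.WACO.222.NEW.AR}, does) close is a real one, and the proof must use that hypothesis to its full strength rather than only its ${\rm BMO}$ shadow.
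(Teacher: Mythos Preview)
You have correctly put your finger on the only nontrivial point: the corollary is stated for an open set with compact Ahlfors regular boundary, whereas Theorem~\ref{TYTFfa.668} is stated for an Ahlfors regular \emph{domain} (i.e., one additionally knows $\mathcal{H}^{n-1}(\partial\Omega\setminus\partial_\ast\Omega)=0$). Your slit-disk example shows this is a genuine distinction, and your observation that the ${\rm VMO}$ hypothesis (not merely ${\rm BMO}$) must be the ingredient that forces $\Omega$ to be an Ahlfors regular domain is exactly right.

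Where your argument breaks down is in the bridge itself. You invoke Theorem~\ref{6trrf.TT.ccc.WACO.222.NEW.AR} to pass from ``open set with compact Ahlfors regular boundary and ${\mathbb{R}}_j1\in{\rm VMO}$'' to ``Ahlfors regular domain'', but that theorem takes as its \emph{hypothesis} that $\Omega$ is an infinitesimally flat {\rm AR} domain---which already entails the Ahlfors regular domain property you are trying to establish. The invocation is therefore circular. The subsequent passage about the Cauchy--Clifford operator being ``idempotent up to errors that disappear at small scales'' is, as you say yourself, a heuristic; it is not a proof, and nothing in the present paper converts it into one.

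The paper's own treatment is simply the sentence preceding the corollary: it is declared to follow from ``this discussion'' and the equivalence \eqref{eq:iugf.6r4}. The actual work of closing the gap you identified---showing that ${\mathbb{R}}_j1\in{\rm VMO}(\partial\Omega,\sigma)$ for all $j$ forces an open set with compact Ahlfors regular boundary to be an Ahlfors regular domain---is not carried out in this paper but is imported from \cite{MMV} (specifically \cite[Theorem~1.4]{MMV}), which is formulated under hypotheses broad enough to cover the corollary. Theorem~\ref{6trrf.TT.ccc.WACO.222.NEW.AR} enters the proof of Theorem~\ref{TYTFfa.668} only in the other direction, to verify that an infinitesimally flat {\rm AR} domain has the geometric structure (two-sided {\rm NTA}, {\rm UR}, etc.) required to apply \cite{MMV}. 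So your proposal has the right diagnosis but cites the wrong cure; the missing step lives in \cite{MMV}, not in Theorem~\ref{6trrf.TT.ccc.WACO.222.NEW.AR}.
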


In addition to the ``distributional'' Riesz transforms \eqref{yrf56f-RRR}-\eqref{yrf56f-RRR.1}, one can also define
``principal-value'' Riesz transforms on a {\rm UR} set $\Sigma\subseteq{\mathbb{R}}^n$. 
These are the singular integral operators $\{R_j\}_{1\leq j\leq n}$ where, for each $j\in\{1,\dots,n\}$,
each function $f\in L^1\big(\Sigma,\frac{{\mathcal{H}}^{n-1}(x)}{1+|x|^{n-1}}\big)$, and 
for ${\mathcal{H}}^{\,n-1}$-a.e. $x\in\Sigma$,
\begin{equation}\label{Cau-RRj}
R_jf(x):=\lim_{\varepsilon\to 0^{+}}\frac{2}{\omega_{n-1}}
\int\limits_{\substack{y\in\Sigma\\ |x-y|>\varepsilon}}
\frac{x_j-y_j}{|x-y|^n}f(y)\,d{\mathcal{H}}^{\,n-1}(y).
\end{equation}
Regarding the relationship between the two brands of Riesz transforms considered above, it is instructive to note that 
(see the discussion in \cite[p.\,104]{MMV}):
\begin{equation}\label{eq:}
\parbox{10.00cm}{if  $\Sigma\subseteq{\mathbb{R}}^n$ is a compact {\rm UR} set then for each $j\in\{1,\dots,n\}$ 
the $j$-th principal-value Riesz transform $R_j$ from \eqref{Cau-RRj} and the $j$-th distributional Riesz transform
${\mathbb{R}}_j$ from \eqref{yrf56f-RRR}-\eqref{yrf56f-RRR.1} agree on ${\mathscr{C}}^{\alpha}(\partial\Omega)$ 
for each $\alpha\in(0,1)$; hence, $R_j1={\mathbb{R}}_j1$ for $1\leq j\leq n$.}
\end{equation}

In the case when $\Sigma\subseteq{\mathbb{R}}^n$ is an unbounded {\rm UR} set, one may alter the definition in \eqref{Cau-RRj} as to
allow the resulting operators to act meaningfully on bounded functions on $\Sigma$. Following \cite[(2.3.31), p.\,352]{GHA.III}, 
if $\sigma:={\mathcal{H}}^{n-1}\lfloor\Sigma$, then for each $j\in\{1,\dots,n\}$ we define the modified $j$-th Riesz transform 
$R^{{}^{\rm mod}}_j$ on $\Sigma$ as the principal-value singular integral operator acting on any given function 
$f\in L^1\big(\Sigma\,,\,\tfrac{\sigma(x)}{1+|x|^n}\big)$ at $\sigma$-a.e. point in $\Sigma$ according to 
\begin{equation}\label{eq:MOD.RZ.1}
R^{{}^{\rm mod}}_jf(x):=\lim\limits_{\varepsilon\to 0^{+}}\int_\Sigma
\big\{k_{j,\varepsilon}(x-y)-k_{j,1}(-y)\big\}f(y)\,d\sigma(y),
\end{equation}
where for each $\varepsilon>0$ we have set 
\begin{equation}\label{eq:MOD.RZ.2}
k_{j,\varepsilon}(z):=\frac{2}{\omega_{n-1}}\frac{z_j}{|z|^n}\cdot
{\mathbf{1}}_{{\mathbb{R}}^n\setminus\overline{B(0,\varepsilon)}}(z)
\,\,\text{ for each }\,\,z\in{\mathbb{R}}^n\setminus\{0\}.
\end{equation}

When $n\in{\mathbb{N}}$ with $n\geq 2$ and the set $\Sigma$ is the hyperplane ${\mathbb{R}}^{n-1}\times\{0\}$ canonically identified 
with the $(n-1)$-dimensional Euclidean space ${\mathbb{R}}^{n-1}$, the definition made in \eqref{Cau-RRj} yields 
the $j$-th Riesz transform in ${\mathbb{R}}^{n-1}$, i.e., the singular integral operator acting on any 
$f\in L^1\big({\mathbb{R}}^{n-1}\,,\,\frac{dx'}{1+|x'|^{n-1}}\big)$ according to 
\begin{align}\label{eq:Rieszdef.INT}
(R_jf)(x'):=\lim\limits_{\varepsilon\to 0^{+}}\frac{2}{\omega_{n-1}}
\int\limits_{\substack{y'\in{\mathbb{R}}^{n-1}\\ |x'-y'|>\varepsilon}}\frac{x_j-y_j}{|x'-y'|^n}f(y')\,dy'
\end{align} 
for ${\mathcal{L}}^{n-1}$-a.e. $x'\in{\mathbb{R}}^{n-1}$.
In this setting, it is well known (see \cite[(4.11), p.\,284]{GCRF85} and \cite[Corollary~1, p.\,221]{St70}; cf. also 
\cite[\S4.3, p.\,123]{Stein93}) that the standard Hardy space\footnote{in ${\mathbb{R}}^{n-1}$, equipped with the 
$(n-1)$-dimensional Lebesgue measure ${\mathcal{L}}^{n-1}$} $H^1({\mathbb{R}}^{n-1},{\mathcal{L}}^{n-1})$ may be described as
\begin{align}\label{u6gGVV.1}
&H^1({\mathbb{R}}^{n-1},{\mathcal{L}}^{n-1})\\[2pt]
&\quad=\big\{f\in L^1({\mathbb{R}}^{n-1},{\mathcal{L}}^{n-1}):\,
R_jf\in L^1({\mathbb{R}}^{n-1},{\mathcal{L}}^{n-1}),\,\,1\leq j\leq n-1\big\}.
\nonumber
\end{align}
In a nutshell, the Riesz transforms characterize the Hardy space $H^1$. One direction in which this result has been extended is 
understanding what other classes of operators may be used in place of the Riesz transforms. This question was asked by C.~Fefferman 
in the survey paper \cite{Fe76} where he suggested that any nondegenerate system of very smooth singular integral operators might 
do the job. In its original formulation, this conjecture turned out to be false, as the counterexample found by J.~Garcia-Cuerva 
in \cite{GC79} shows. A necessary (rank $2$) condition was identified by S.~Janson in \cite{Ja77}, where he also conjectured that 
this is sufficient. The latter property was eventually established by A.~Uchiyama in \cite{Uc82}, thus completing the task of 
identifying optimal classes of singular integral operators (of convolution type) which characterize the Hardy space $H^1$ 
in the Euclidean setting. 

When $\Sigma$ is the (full) flat Euclidean space, the template described in \eqref{eq:MOD.RZ.1}-\eqref{eq:MOD.RZ.2} yields the 
$j$-th modified Riesz transform in ${\mathbb{R}}^{n-1}$, i.e., the singular integral operator acting on any given function 
$f\in L^1\big({\mathbb{R}}^{n-1}\,,\,\frac{dx'}{1+|x'|^n}\big)$ at ${\mathcal{L}}^{n-1}$-a.e. point $x'\in{\mathbb{R}}^{n-1}$ 
according to 
\begin{align}\label{eq:Rieszdef.INT.MOD}
\big(R^{{}^{\rm mod}}_jf\big)(x'):=\lim\limits_{\varepsilon\to 0^{+}}\frac{2}{\omega_{n-1}}
&\int\limits_{{\mathbb{R}}^{n-1}}\Bigg\{\frac{x_j-y_j}{|x'-y'|^n}
{\mathbf{1}}_{{\mathbb{R}}^{n-1}\setminus\overline{B_{n-1}(x',\varepsilon)}}(y')
\\[-2pt]
&-\frac{-y_j}{|-y'|^n}{\mathbf{1}}_{{\mathbb{R}}^{n-1}\setminus\overline{B_{n-1}(0',1)}}(y')\Bigg\}f(y')\,dy'
\nonumber
\end{align} 
where, generally speaking, $B_{n-1}(z',r)$ is the $(n-1)$-dimensional ball in ${\mathbb{R}}^{n-1}$ centered at 
$z'\in{\mathbb{R}}^{n-1}$ and of radius $r$. This happens to be a well-defined linear and bounded operator on 
${\mathrm{BMO}}({\mathbb{R}}^{n-1},{\mathcal{L}}^{n-1})$,
(cf. \cite{FeSt72}, \cite[(5.3.70), p.\,666]{GHA.IV}), and the following characterization of 
${\rm BMO}({\mathbb{R}}^{n-1},{\mathcal{L}}^{n-1})$ may be deduced from \cite[Theorem~2, p.\,587]{Fe}:
\begin{equation}\label{u6gGVV.2}
{\rm BMO}({\mathbb{R}}^{n-1},{\mathcal{L}}^{n-1})=\Big\{g_0+\sum_{j=1}^{n-1}R^{{}^{\rm mod}}_jg_j:
g_0,g_1,\dots,g_n\in L^\infty({\mathbb{R}}^{n-1},{\mathcal{L}}^{n-1})\Big\}.
\end{equation}
In a nutshell, {\it bounded functions and modified Riesz transforms of bounded functions span} {\rm BMO}.
In this vein, we also recall from \cite[(5.3.95), p.\,669]{GHA.IV} the characterization of the Sarason space 
${\rm VMO}({\mathbb{R}}^{n-1},{\mathcal{L}}^{n-1})$ using modified Riesz transforms to the effect that 
\begin{equation}\label{eq:675466475.WACO.1}
\begin{array}{c}
\text{for any given function $f\in{\rm BMO}({\mathbb{R}}^{n-1},{\mathcal{L}}^{n-1})$ one has}
\\[6pt]
f\in{\rm VMO}({\mathbb{R}}^{n-1},{\mathcal{L}}^{n-1})\Leftrightarrow
\,
\begin{cases}
R^{{}^{\rm mod}}_jf\in{\rm VMO}({\mathbb{R}}^{n-1},{\mathcal{L}}^{n-1})
\\[2pt]
\text{ for all }\,j\in\{1,\dots,n-1\}.
\end{cases}
\end{array}
\end{equation}
Other results of this flavor may be found in \cite{MaMiMiMi}. 
Here we wish to mention that, as shown in \cite[Proposition~5.3.2, p.\,670]{GHA.IV}, 
\begin{equation}\label{eq:675466475.WACO.agag.tt}
\begin{array}{c}
\text{for any given function $f\in{\rm BMO}(S^{n-1},{\mathcal{H}}^{\,n-1})$ one has}
\\[6pt]
f\in{\rm VMO}(S^{n-1},{\mathcal{H}}^{\,n-1})\Leftrightarrow
\,
\begin{cases}
R_jf\in{\rm VMO}(S^{n-1},{\mathcal{H}}^{\,n-1})
\\[2pt]
\text{ for all }\,j\in\{1,\dots,n\}.
\end{cases}
\end{array}
\end{equation}

The above discussion brings to the forefront the question of determining whether results similar to 
the characterizations \eqref{u6gGVV.1}, \eqref{u6gGVV.2}, \eqref{eq:675466475.WACO.1}, \eqref{eq:675466475.WACO.agag.tt} 
are true for more general ``surfaces'' (replacing the horizontal plane and the unit sphere). 
The goal here is to answer this question in relation to \eqref{u6gGVV.1} and \eqref{u6gGVV.2}, now working 
with Riesz transforms considered on Ahlfors regular sets with small oscillations, quantified in terms of the {\rm BMO} nature of the 
geometric measure theoretic outward unit normal (see Definition~\ref{t7aab-RFCa.WACO.1} and Definition~\ref{def:USKT}), 
in place of the boundary of the upper half-space or the unit ball, as was the case earlier. 

First, we have the following analogue of \eqref{u6gGVV.1} on bounded ``infinitesimally flat surfaces'' which refines
the result proved by E.~Fabes and C.~Kenig in \cite[Theorem~2.6. p.\,15]{FaKe81} for compact ${\mathscr{C}}^1$ surfaces
(cf. \eqref{eq:sffs.WACO.D+M}; for domains with unbounded boundaries see Theorem~\ref{u6g5rfffc-CCC.1111.UNB} below).

\begin{theorem}\label{u6g5rfffc-CCC.1111}
If $\Omega\subset\mathbb{R}^{n}$ is an infinitesimally flat {\rm AR} domain and $\sigma:={\mathcal{H}}^{\,n-1}\lfloor\partial\Omega$, 
then the Hardy space $H^{1}$ on $\partial\Omega$ {\rm (}regarded as a space of homogeneous type when equipped with the doubling 
measure $\sigma$ and the Euclidean distance; cf., e.g., {\rm\cite{AM15}}{\rm )} may be described as follows:
\begin{equation}\label{7ggVV}
H^{1}(\partial\Omega,\sigma)=\big\{f\in L^{1}(\partial\Omega,\sigma):\,R_jf\in L^{1}(\partial\Omega,\sigma)
\,\text{ for }\,1\leq j\leq n\big\}.
\end{equation}
\end{theorem}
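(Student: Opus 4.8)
The plan is to establish the two inclusions in \eqref{7ggVV} separately, exploiting the Clifford algebra formalism and the Cauchy--Clifford integral operator, which allows one to treat the full system $\{R_j\}_{1\le j\le n}$ as a single operator. First I would recall that on an Ahlfors regular (in fact UR) boundary $\partial\Omega$ the principal-value Riesz transforms are bounded on $L^p(\partial\Omega,\sigma)$ for $1<p<\infty$, and that, packaged together, they are intimately tied to the boundary version $C$ of the Cauchy--Clifford operator via an identity of the schematic form $C = \tfrac12 I + \tfrac12 \sum_j e_j R_j e_j$ (up to the conventions fixed earlier). The ``easy'' inclusion $\subseteq$ in \eqref{7ggVV} then amounts to showing that each $R_j$ maps $H^1(\partial\Omega,\sigma)$ boundedly into $L^1(\partial\Omega,\sigma)$; this is a standard Calder\'on--Zygmund-type statement on the space of homogeneous type $(\partial\Omega,\sigma,|\cdot|)$ since the kernels $\frac{x_j-y_j}{|x-y|^n}$ are odd, hence satisfy the requisite cancellation needed to map atoms into integrable functions with uniformly controlled $L^1$ norm, followed by a density/limiting argument to pass from atoms to all of $H^1$.

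The substantive inclusion is the reverse one: if $f\in L^1(\partial\Omega,\sigma)$ and $R_jf\in L^1(\partial\Omega,\sigma)$ for all $j$, then $f\in H^1(\partial\Omega,\sigma)$. Here the strategy is to identify $H^1(\partial\Omega,\sigma)$ with a space cut out by a suitable maximal function (nontangential maximal function of a Clifford-monogenic extension, or the grand maximal function on the space of homogeneous type) and to produce such an extension from $f$ and its Riesz transforms. Concretely, I would form the Clifford-valued function $F := \tfrac12 f + \tfrac12 \sum_j e_j (R_j f) e_j$ on $\partial\Omega$ — morally the boundary trace of a monogenic function in $\Omega$ — use the jump-formula/Calder\'on--Zygmund theory for the Cauchy--Clifford operator to realize $F$ as the nontangential boundary limit of the Cauchy--Clifford extension of $f$, and then invoke the characterization of $H^1$ via the nontangential maximal function of (components of) harmonic/monogenic extensions. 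The hypothesis that all $R_jf\in L^1$ is exactly what guarantees that the relevant conjugate-system maximal functions are in $L^1(\partial\Omega,\sigma)$, which in the flat case is the classical Stein--Weiss mechanism behind \eqref{u6gGVV.1}.

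The point where the infinitesimal flatness of $\Omega$ enters decisively — and where I expect the main obstacle to lie — is in controlling the operator $C$ (equivalently $\sum_j e_j R_j e_j$) well enough that the above program closes. On a general UR domain the Cauchy--Clifford operator need not be invertible, nor even small as a perturbation; it is precisely the assumption $\nu\in[\mathrm{VMO}(\partial\Omega,\sigma)]^n$ (the defining feature of an infinitesimally flat AR domain, cf. Definition~\ref{t7aab-RFCa.WACO.1}) that forces the commutator-type terms to be compact and renders $\pm\tfrac12 I + C$ Fredholm with index zero, hence invertible on $L^p$ and on $H^1$ after the usual index computation. So the key step is: (i) the VMO nature of $\nu$ implies that the ``bad'' part of the boundary Cauchy--Clifford operator (the part measuring deviation from flatness) is a compact operator on $L^p(\partial\Omega,\sigma)$; and (ii) consequently the Riesz-transform system is, modulo compact and therefore $H^1$-harmless corrections, conjugate to the flat model, allowing the Euclidean characterization \eqref{u6gGVV.1} to be transplanted to $\partial\Omega$.

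Finally I would assemble the pieces: the forward inclusion from Calder\'on--Zygmund theory on the space of homogeneous type, the reverse inclusion from the Cauchy--Clifford extension together with the nontangential-maximal-function characterization of $H^1(\partial\Omega,\sigma)$, and the invertibility of the relevant boundary operators supplied by the infinitesimal flatness hypothesis via the compactness of the commutator with $\nu$. A technical wrinkle worth flagging is the passage between the ``principal-value'' Riesz transforms $R_j$ appearing in \eqref{7ggVV} and the ``distributional'' ones ${\mathbb{R}}_j$ used in the geometric statements; since $\partial\Omega$ is compact and UR, \eqref{eq:} lets one identify the two on H\"older functions, which is the density class one ultimately needs. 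I would carry out the argument first assuming the $L^p$-theory and the mapping properties of $C$ on $\partial\Omega$ as known (they are available in the references cited in the excerpt), and then reduce everything to the single compactness input on the commutator $[\nu,\cdot]$ acting through the singular kernel, which is where all the geometric hypotheses are consumed.
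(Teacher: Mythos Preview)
Your overall architecture is sound---package the Riesz transforms into a Clifford-valued operator, use the Cauchy--Clifford extension and jump relations, and treat the easy inclusion via standard Calder\'on--Zygmund theory on atoms. But there is a genuine gap in the hard inclusion, and you misidentify where the infinitesimal flatness hypothesis is consumed. You assert that ``the hypothesis that all $R_jf\in L^1$ is exactly what guarantees that the relevant conjugate-system maximal functions are in $L^1$,'' invoking the Stein--Weiss mechanism. But implementing that mechanism on $\partial\Omega$ is the entire problem: if $u=\mathcal{C}f$ is the monogenic extension, one knows a priori only $\mathcal{N}_\kappa u\in L^{1,\infty}$ and $u\big|^{{}^{\kappa-{\rm n.t.}}}_{\partial\Omega}\in L^1$. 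Upgrading to $\mathcal{N}_\kappa u\in L^1$ requires the subharmonicity of $|u|^\theta$ \emph{together with} a harmonic majorant for $|u|^\theta$ having controlled $L^{1/\theta}$ nontangential maximal function. Producing that majorant means solving the $L^p$-Dirichlet problem for the Laplacian in $\Omega$ with $p=1/\theta$ close to $1$, and then proving a comparison principle (subharmonic $\leq$ harmonic under matching traces and $L^p$ nontangential control). This is where $\nu\in{\rm VMO}$ is actually spent---see Proposition~\ref{yrFCC} (Green function and comparison) and Proposition~\ref{yrFCC-TT}---not on Fredholmness of $\pm\tfrac12 I+\mathfrak{C}$.

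Your proposed alternative, using compactness of commutators to make $\pm\tfrac12 I+\mathfrak{C}$ Fredholm of index zero and ``transplant'' the flat characterization, does not close the argument: even granting invertibility of $\pm\tfrac12 I+\mathfrak{C}$ on $L^p$ or on $H^1$, there is no mechanism by which $f,\mathfrak{C}f\in L^1$ forces $f\in H^1$, and compact perturbations are not ``$H^1$-harmless'' in the sense you need (a compact operator on $L^p$ need not respect the $H^1$ scale at all). A further ingredient your sketch omits is the two-sided nature of the paper's argument: one applies the monogenic machinery in both $\Omega_{+}=\Omega$ and $\Omega_{-}={\mathbb{R}}^n\setminus\overline{\Omega}$ (each infinitesimally flat AR by Theorem~\ref{6trrf.TT.ccc.WACO.222.NEW.AR}), and subtracts the $\pm$ jump relations to isolate $\nu\odot f\in H^1$, hence $f\in H^1$; cf. Theorems~\ref{yrFCC-TT.aat}--\ref{yrFCC-TT.aat.TRa}. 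Without the $\Omega_{-}$ piece you only obtain $\nu\odot(\tfrac12 I+\mathfrak{C})f\in H^1$, which is not enough.
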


Theorem~\ref{u6g5rfffc-CCC.1111} is in fact implied by a more general result established in the body of this paper, 
specifically Theorem~\ref{yrFCC-TT.aat.TRa}, which encompasses a variety of related corollaries that are of independent interest.
For example, in the same geometric setting as in Theorem~\ref{u6g5rfffc-CCC.1111}, for any family of functions
$f_1,\dots,f_n\in L^1(\partial\Omega,\sigma)$ one has:
\begin{equation}\label{D+M.FEB}
\begin{array}{c}
\text{$f_1,\dots,f_n\in H^1(\partial\Omega,\sigma)$ if and only if}
\\[0pt]
\text{$\sum\limits_{j=1}^nR_jf_j\in L^1(\partial\Omega,\sigma)$ and $R_jf_k-R_kf_j\in L^1(\partial\Omega,\sigma)$ 
for $1\leq j<k\leq n$}.
\end{array}
\end{equation}
Indeed, this readily follows from Theorem~\ref{yrFCC-TT.aat.TRa} applied to the Clifford algebra-valued function 
$f:=\sum\limits_{j=1}^nf_je_j$. The aforementioned characterization of $H^1$ for families of functions is new even when 
$\Omega$ is the unit ball.  

Another way of understanding Theorem~\ref{u6g5rfffc-CCC.1111} is through the lens of mapping properties of 
principal-value integral operators of convolution type 
\begin{equation}\label{eq:gy7tucfy.A1}
T_kf(x):=\lim\limits_{\varepsilon\to 0^{+}}\int\limits_{\substack{y\in\Sigma\\ |x-y|>\varepsilon}}k(x-y)f(y)\,d\sigma(y),\quad 
f\in L^1(\Sigma,\sigma),\quad x\in\Sigma,
\end{equation}
where $\Sigma\subseteq{\mathbb{R}}^n$ is a compact {\rm UR} set, and 
$k\in{\mathscr{C}}^\infty({\mathbb{R}}^n\setminus\{0\})$ is odd and positive homogeneous of degree $1-n$.
The fact that each $T_k$ maps $L^1(\Sigma,\sigma)$ into $L^{1,\infty}(\Sigma,\sigma)$ (cf. \cite[(2.3.19), p.\,350]{GHA.III}) 
invites the question of identifying the largest subspace of $L^1(\Sigma,\sigma)$ which is mapped by any such $T_k$ into 
$L^1(\Sigma,\sigma)$. What Theorem~\ref{u6g5rfffc-CCC.1111} implies (together with the fact that each $T_k$ maps 
$H^1(\Sigma,\sigma)$ into $L^1(\Sigma,\sigma)$; cf. \cite[(2.3.27), p.\,351]{GHA.III}) is that the answer to this question 
is the Hardy space $H^1(\Sigma,\sigma)$ if $\Sigma:=\partial\Omega$ with $\Omega\subset\mathbb{R}^{n}$ an infinitesimally flat 
{\rm AR} domain. Succinctly put, $H^1$ satisfies a natural ``maximality'' property, when considered on compact 
infinitesimally flat surfaces.

Second, we have the following analogue of \eqref{u6gGVV.2} on compact infinitesimally flat surfaces.

\begin{theorem}\label{u6g5rfffc-CCC}
If $\Omega\subset\mathbb{R}^{n}$ is an infinitesimally flat {\rm AR} domain
and $\sigma:={\mathcal{H}}^{\,n-1}\lfloor\partial\Omega$, then the John-Nirenberg space ${\rm BMO}$ 
on $\partial\Omega$ {\rm (}regarded as a space of homogeneous type when equipped with the doubling measure 
$\sigma$ and the Euclidean distance{\rm )} may be described as follows:
\begin{equation}\label{utggG-TRFF.rt.A-CCC}
{\rm BMO}(\partial\Omega,\sigma)=\Big\{f_0+\sum_{j=1}^n R_jf_j:\,
f_0,f_1,\dots,f_n\in L^\infty(\partial\Omega,\sigma)\Big\}.
\end{equation}
\end{theorem}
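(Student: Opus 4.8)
The plan is to derive Theorem~\ref{u6g5rfffc-CCC} as a dual companion to the Hardy space characterization in Theorem~\ref{u6g5rfffc-CCC.1111}, exploiting the duality $H^1(\partial\Omega,\sigma)^\ast={\rm BMO}(\partial\Omega,\sigma)$ valid on spaces of homogeneous type (Coifman--Weiss), together with the fact that on a compact {\rm UR} set each Riesz transform $R_j$ is essentially antisymmetric — its formal transpose on $L^2(\partial\Omega,\sigma)$ is $-R_j$ (modulo the principal-value conventions), since the kernel $(x_j-y_j)/|x-y|^n$ is odd. The inclusion ``$\supseteq$'' in \eqref{utggG-TRFF.rt.A-CCC} is the easy direction: $L^\infty(\partial\Omega,\sigma)\hookrightarrow{\rm BMO}(\partial\Omega,\sigma)$ trivially (the measure is doubling, so constants are subtracted in the obvious way), and each $R_j$ maps $L^\infty(\partial\Omega,\sigma)$ boundedly into ${\rm BMO}(\partial\Omega,\sigma)$. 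The latter is a standard Calder\'on--Zygmund fact on compact {\rm UR} sets; one can cite the $L^\infty\to{\rm BMO}$ boundedness of principal-value singular integral operators of convolution type with odd homogeneous kernels of degree $1-n$ (the same family $T_k$ appearing around \eqref{eq:gy7tucfy.A1}), so $f_0+\sum_j R_jf_j\in{\rm BMO}(\partial\Omega,\sigma)$ whenever the $f_j$ lie in $L^\infty$.

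For the nontrivial inclusion ``$\subseteq$'', I would argue by duality. Denote by $X$ the right-hand side of \eqref{utggG-TRFF.rt.A-CCC}, i.e. the linear subspace of ${\rm BMO}(\partial\Omega,\sigma)$ consisting of all $f_0+\sum_{j=1}^n R_jf_j$ with $f_0,\dots,f_n\in L^\infty(\partial\Omega,\sigma)$. The goal is $X={\rm BMO}(\partial\Omega,\sigma)$. First I would show $X$ is closed in ${\rm BMO}(\partial\Omega,\sigma)$: consider the bounded linear map
\begin{equation*}
\Lambda:\big[L^\infty(\partial\Omega,\sigma)\big]^{n+1}\longrightarrow{\rm BMO}(\partial\Omega,\sigma),\qquad
\Lambda(f_0,f_1,\dots,f_n):=f_0+\sum_{j=1}^n R_jf_j,
\end{equation*}
whose range is $X$. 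By the open mapping theorem it suffices to check that $X$ is closed, and for that the cleanest route is to show $X=\,^\perp\!N$ for a suitable subspace $N$ of the predual $H^1(\partial\Omega,\sigma)$; an annihilator is automatically weak-$\ast$ closed, hence norm closed. Concretely, if $g\in H^1(\partial\Omega,\sigma)$ annihilates $X$ then in particular $\langle g,f_0\rangle=0$ for all $f_0\in L^\infty$, which forces $g=0$; so $^\perp X=\{0\}$ inside $H^1$, and by the bipolar theorem $X$ is weak-$\ast$ dense in ${\rm BMO}$. To upgrade density to equality, I would run the argument in the opposite order: suppose $g\in H^1(\partial\Omega,\sigma)$ annihilates the range of $R_j$ for every $j$ in the sense that $\langle g,R_jf_j\rangle=0$ for all $f_j\in L^\infty$; since the transpose of $R_j$ is $-R_j$, this says $R_jg=0$ as an element of $(L^\infty)^\ast$, and in fact $R_jg\in L^1(\partial\Omega,\sigma)$ with $\int_{\partial\Omega}(R_jg)\,f_j\,d\sigma=0$ for all bounded $f_j$, whence $R_jg=0$ $\sigma$-a.e.

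Here is where Theorem~\ref{u6g5rfffc-CCC.1111} (equivalently, the more general Theorem~\ref{yrFCC-TT.aat.TRa}) enters decisively: by \eqref{7ggVV}, a function $g\in L^1(\partial\Omega,\sigma)$ with all $R_jg\in L^1(\partial\Omega,\sigma)$ already lies in $H^1(\partial\Omega,\sigma)$; and the subspace of such $g$ with $R_jg=0$ for all $j$ is a closed subspace of $H^1$ on which all the Riesz transforms vanish. On $\partial\Omega$ of an infinitesimally flat {\rm AR} domain this subspace is $\{0\}$ — this is the analytic heart of the matter, and I expect it to be the main obstacle. The cleanest way to see it is to pass to the Clifford-algebra formalism underlying Theorem~\ref{yrFCC-TT.aat.TRa}: if $R_jg=0$ for all $j$, then the Clifford-valued Cauchy--Clifford singular integral operator applied to $g$ (whose components are combinations of $g$ and the $R_jg$) reduces to a multiple of $g$ itself, and the ``jump-formula'' identities $C^2=\tfrac14 I$ (valid on infinitesimally flat {\rm AR} domains, where the commutator of $C$ with the Clifford conjugation is controlled by $\|\nu\|_{{\rm BMO}}$ being small — in fact vanishing in the ${\rm VMO}$ limit) force $g$ to be simultaneously a boundary trace of a Clifford-monogenic function inside $\Omega$ and outside $\Omega$, hence $g\equiv 0$. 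Granting this, $^\perp\!\big(\overline{\operatorname{span}}\{R_jf_j\}\big)$ inside $H^1$ equals $\{0\}$, which combined with the trivial observation about $f_0$ forces $^\perp X=\{0\}$; since $X$ is closed (as the annihilator in ${\rm BMO}$ of a subspace of the predual, after the closedness reduction above), the bipolar theorem gives $X={\rm BMO}(\partial\Omega,\sigma)$. In practice I would organize the write-up so that the quantitative Clifford-algebra input is quoted from the already-established Theorem~\ref{yrFCC-TT.aat.TRa}, and the BMO statement is deduced from it by the duality bookkeeping just sketched, exactly paralleling how \eqref{u6gGVV.2} is deduced from \eqref{u6gGVV.1} in the flat Euclidean setting via \cite{Fe}.
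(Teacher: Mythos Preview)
Your easy direction ($\supseteq$) matches the paper exactly: $L^\infty\hookrightarrow{\rm BMO}$ plus the $L^\infty\to{\rm BMO}$ boundedness of the $R_j$'s on a compact {\rm UR} set (the paper cites \cite[(2.3.39), p.\,353]{GHA.III}).

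For the hard direction your ingredients are right (the $H^1$ characterization of Theorem~\ref{yrFCC-TT.aat.TRa}, the duality $(H^1)^\ast={\rm BMO}$, and the antisymmetry $R_j^\top=-R_j$), but the logical skeleton you propose has a genuine gap. You correctly observe that $^\perp X=\{0\}$ in $H^1$ --- this is immediate from the $f_0$-slot alone --- and hence that $X$ is weak-$\ast$ dense in ${\rm BMO}$. The problem is the step ``show $X$ is closed.'' You say the cleanest route is to exhibit $X={}^\perp N$ for some $N\subseteq H^1$, but you never identify $N$, and in fact writing $X$ as a pre-annihilator \emph{presupposes} that $X$ is already weak-$\ast$ closed. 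Your subsequent discussion of the implication ``$R_jg=0$ for all $j\Rightarrow g=0$'' is a red herring: even if true, it says nothing about the closedness of $X$, and it is not needed for the conclusion.

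The paper avoids this entirely by running the classical Hahn--Banach argument (this is the content of Theorem~\ref{u6g5rfffc}, which the proof of Theorem~\ref{u6g5rfffc-CCC} simply quotes). One embeds $H^1(\partial\Omega,\sigma)$ isomorphically into $[L^1(\partial\Omega,\sigma)]^{n+1}$ via $g\mapsto(g,R_1g,\dots,R_ng)$ --- the quantitative $H^1$ characterization \eqref{7ggVV} is exactly what makes this an isomorphism onto a closed subspace $\mathscr{W}$ --- then regards a given $f\in{\rm BMO}=(H^1)^\ast$ as a functional on $\mathscr{W}$, extends it by Hahn--Banach to all of $[L^1]^{n+1}$, and reads off $f_0,\dots,f_n\in L^\infty$ from the dual. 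The antisymmetry of the $R_j$'s then converts the resulting identity into $f=f_0+\sum_j R_jf_j$ in ${\rm BMO}$. This delivers the representation directly, with norm control, and never needs to prove that $X$ is closed as a separate step.

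If you want to salvage your route, the honest fix is the closed range theorem: your map $\Lambda:[L^\infty]^{n+1}\to{\rm BMO}$ is precisely the adjoint of $T:H^1\to[L^1]^{n+1}$, $T(g)=(g,-R_1g,\dots,-R_ng)$, and the quantitative $H^1$ characterization says $T$ is an isomorphism onto its (closed) image. Hence $\Lambda=T^\ast$ has weak-$\ast$ closed range equal to $(\ker T)^\perp=\{0\}^\perp={\rm BMO}$. But once you have set this up you have essentially reproduced the Hahn--Banach argument, so there is no real gain over the paper's approach.
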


Here is another perspective on the above result. Given a compact {\rm UR} set $\Sigma\subseteq{\mathbb{R}}^n$, a natural question 
is to determine the smallest enlargement of $L^\infty(\Sigma,\sigma)$ to a space into which all singular integral operators as 
in \eqref{eq:gy7tucfy.A1} take values when acting from $L^\infty(\Sigma,\sigma)$. Theorem~\ref{u6g5rfffc-CCC}, in concert with the fact 
that each $T_k$ maps $L^\infty(\Sigma,\sigma)$ into ${\rm BMO}(\Sigma,\sigma)$ (cf. \cite[(2.3.39), p.\,353]{GHA.III}) shows that the John-Nirenberg space 
${\rm BMO}(\Sigma,\sigma)$ is the minimizer when $\Sigma:=\partial\Omega$ with $\Omega\subset\mathbb{R}^{n}$ an infinitesimally flat {\rm AR} domain.
Hence, ${\rm BMO}$ satisfies a natural ``minimality'' property, when considered on compact infinitesimally flat surfaces.

In relation to these results we remark that, thanks to work in \cite{GHA.V} and the approach developed here, 
the characterizations in Theorems~\ref{u6g5rfffc-CCC.1111}-\ref{u6g5rfffc-CCC} 
continue to hold for the more inclusive class of $\delta$-oscillating {\rm AR} domains with $\delta\in(0,1)$ sufficiently small. 
The latter class of domains has been introduced in \cite[Definition~3.4.1, p.\,158]{GHA.V} as the collection 
of Ahlfors regular domains $\Omega\subset\mathbb{R}^{n}$ with a compact boundary satisfying  
\begin{equation}\label{eq:Rieuyeyer}
{\rm dist}\,\Big(\nu\,,\,\big[{\rm VMO}(\partial\Omega,\sigma)\big]^n\Big)<\delta
\end{equation}
where $\sigma:={\mathcal{H}}^{\,n-1}\lfloor\partial\Omega$, the symbol $\nu$ denotes the geometric measure theoretic outward unit normal to $\Omega$, 
and the distance in the left-hand side of \eqref{eq:Rieuyeyer} is measured in $\big[{\rm BMO}(\partial\Omega,\sigma)\big]^n$.

Moreover, we are able to establish analogous results to those described in Theorems~\ref{u6g5rfffc-CCC.1111}-\ref{u6g5rfffc-CCC}
in the class of Ahlfors regular domains $\Omega\subset\mathbb{R}^{n}$ whose topological boundaries are not compact, now asking, 
in place of \eqref{eq:Rieuyeyer}, that the geometric measure theoretic outward unit normal $\nu$ to $\Omega$ satisfies 
\begin{equation}\label{eq:Rieuyeyer.UNB}
\|\nu\|_{[{\rm BMO}(\partial\Omega,\sigma)]^n}<\delta
\end{equation}
for some $\delta\in(0,1)$ sufficiently small (relative to the dimension $n$ and the Ahlfors regularity constants of $\partial\Omega$).
Ahlfors regular domains satisfying \eqref{eq:Rieuyeyer.UNB} are going to be referred to as $\delta$-{\rm AR} domains (cf. Definition~\ref{def:USKT}). 
They have been introduced and studied in \cite{GHA.V} and \cite{M5} where it has been shown that condition \eqref{eq:Rieuyeyer.UNB} with 
$\delta\in(0,1)$ sufficiently small has profound implications for the geometry and topology of the set $\Omega$. Among other things, this implies that
$\Omega$ is both a connected two-sided {\rm NTA} domain in the sense of Jerison-Kenig \cite{JeKe82}, and a {\rm UR}  
domain in the sense of Hofmann-Mitrea-Taylor \cite{HoMiTa10}, whose topological boundary $\partial\Omega$ is an unbounded connected set. 
See \cite[Theorem~3.1.5, pp.\,118-119]{GHA.V} and \cite[Theorem~2.5, p.\,86]{M5}. Remarkably, it has been shown in \cite[Example~2.7, pp.\,92-95]{M5} 
that $\delta$-{\rm AR} domains can develop spiral points on the boundary. Such singularities indicate that domains in this class may not necessarily 
be locally the graphs of functions; in particular, there are $\delta$-{\rm AR} domains which fail to be Lipschitz. The interested reader is referred to 
\cite[Chapter~3]{GHA.V} and \cite[\S2.4]{M5} for a more thorough discussion. Here we only mention that condition \eqref{eq:Rieuyeyer.UNB} is satisfied 
by Lipschitz domains with small Lipschitz constant, upper-graph ${\rm BMO}_1$-domains with small constant, and chord-arc 
domains with an unbounded boundary having a small constant in the plane. Heuristically speaking, 
\begin{equation}\label{tFFf.123}
\parbox{9.40cm}{\textit{the class of $\delta$-AR domains is the sharp version, from a geometric measure
theoretic point of view, of the category of upper-graph Lipschitz domains with small Lipschitz constants.}}
\end{equation}
Specifically, for the category of $\delta$-{\rm AR} domains we prove the following theorem:

\begin{theorem}\label{u6g5rfffc-CCC.1111.UNB}
If $\Omega\subset\mathbb{R}^{n}$ is a $\delta$-{\rm AR} domain in $\mathbb{R}^{n}$ with $\delta\in(0,1)$ sufficiently small 
{\rm (}relative to $n$ and the Ahlfors regularity constants of $\partial\Omega${\rm )} and $\sigma:={\mathcal{H}}^{\,n-1}\lfloor\partial\Omega$, 
then the Hardy space $H^{1}$ on $\partial\Omega$ may be described as 
\begin{equation}\label{7ggVV.UNB}
H^{1}(\partial\Omega,\sigma)=\big\{f\in L^{1}(\partial\Omega,\sigma):\,R_jf\in L^{1}(\partial\Omega,\sigma)
\,\text{ for }\,1\leq j\leq n\big\},
\end{equation}
where $R_j$ stands for the $j$-th Riesz transform on $\partial\Omega$ {\rm (}cf. \eqref{Cau-RRj}{\rm )},  
while the John-Nirenberg space {\rm BMO} of functions of bounded mean oscillations on $\partial\Omega$ may be described as
\begin{equation}\label{utggG-TRFF.rt.A-CCC.UNB}
{\rm BMO}(\partial\Omega,\sigma)=\Big\{f_0+\sum_{j=1}^n R^{{}^{\rm mod}}_jf_j:\,
f_0,f_1,\dots,f_n\in L^\infty(\partial\Omega,\sigma)\Big\},
\end{equation}
where the modified $j$-th Riesz transform $R^{{}^{\rm mod}}_j$ is defined as in \eqref{eq:MOD.RZ.1}-\eqref{eq:MOD.RZ.2}.
\end{theorem}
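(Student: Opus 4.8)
The plan is to derive Theorem~\ref{u6g5rfffc-CCC.1111.UNB} as the non-compact counterpart of Theorems~\ref{u6g5rfffc-CCC.1111}--\ref{u6g5rfffc-CCC}, relying on the same Clifford-algebra machinery that is packaged in the (still unseen, but referenced) Theorem~\ref{yrFCC-TT.aat.TRa}, together with the structural results about $\delta$-{\rm AR} domains recorded in \cite[Theorem~3.1.5]{GHA.V} and \cite[Theorem~2.5]{M5}. The organizing principle is that the Riesz transforms $R_j$ (and their modified versions $R^{{}^{\rm mod}}_j$) assemble, via the Clifford imaginary units $e_j$, into the boundary Cauchy--Clifford operator, whose invertibility properties hinge on the smallness of $\|\nu\|_{[{\rm BMO}]^n}$. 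First I would fix the Clifford algebra framework: set $D:=\sum_{j=1}^n e_j\partial_j$ for the Dirac operator, let $E$ be the standard fundamental solution, and let $\mathcal{C}^{{}^{\rm mod}}$ denote the (modified, so as to act on functions of bounded mean oscillation or merely of controlled growth) Cauchy--Clifford integral operator on $\partial\Omega$, whose ``real part'' recovers $\tfrac12\big(\mathbf{1}+\sum_j e_j R^{{}^{\rm mod}}_j\big)$ up to lower-order terms. The key algebraic identity to exploit is that $\nu\odot D$-type commutator relations turn $R^{{}^{\rm mod}}_j R^{{}^{\rm mod}}_k$ combinations into manageable expressions, so that the $n$-tuple $(f_1,\dots,f_n)$ encoding membership in $H^1$ or the span in \eqref{utggG-TRFF.rt.A-CCC.UNB} is controlled by a single Clifford-valued condition.

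Next I would run the two inclusions in each identity. For \eqref{7ggVV.UNB}, the inclusion $\subseteq$ is the soft direction: $R_j$ maps $H^1(\partial\Omega,\sigma)$ into $L^1(\partial\Omega,\sigma)$ because it is a Calder\'on--Zygmund operator on the space of homogeneous type $(\partial\Omega,\sigma,|\cdot|)$ — here one cites the $H^1\to L^1$ boundedness of odd homogeneous-kernel operators on {\rm UR} sets, e.g. \cite[(2.3.27)]{GHA.III}, which applies since a $\delta$-{\rm AR} domain is a {\rm UR} domain. The reverse inclusion is where the hypothesis bites: given $f\in L^1$ with all $R_jf\in L^1$, I would apply Theorem~\ref{yrFCC-TT.aat.TRa} to the Clifford-valued datum $f$ (scalar-valued, viewed in the even subalgebra) to conclude $f\in H^1(\partial\Omega,\sigma)$; the engine there is that smallness of $\delta$ makes $\tfrac12\mathbf{1}+\mathcal{C}^{{}^{\rm mod}}$ (or the relevant Riesz-transform-built operator) a Fredholm — in fact invertible — perturbation of the identity, so that the ``analytic'' completion of $f$ to a monogenic function in $\Omega$ with $L^1$ nontangential maximal function exists, which by the Clifford Hardy-space theory forces $f\in H^1$. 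For \eqref{utggG-TRFF.rt.A-CCC.UNB} the inclusion $\supseteq$ follows from $R^{{}^{\rm mod}}_j:L^\infty\to{\rm BMO}$ on the homogeneous-type space (cf. \cite[(2.3.39)]{GHA.III}, using that the modification is precisely what makes the operator land in {\rm BMO} rather than merely locally so), and $\subseteq$ is obtained by duality from the $H^1$ statement, pairing the {\rm BMO} datum against $H^1$-atoms and transposing the $R_j$'s (whose formal adjoints are $-R_j$, up to sign, by the oddness of the kernel), exactly as Fefferman--Stein's argument is run, but now on $(\partial\Omega,\sigma)$ and citing the $H^1$--{\rm BMO} duality on spaces of homogeneous type \cite{AM15}.

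The main obstacle, and the place where the $\delta$-{\rm AR} (as opposed to merely {\rm UR}) hypothesis is indispensable, is the invertibility of the relevant integral operator built from the $R_j^{{}^{\rm mod}}$'s on the non-compact boundary. On a compact infinitesimally flat {\rm AR} domain (Theorems~\ref{u6g5rfffc-CCC.1111}--\ref{u6g5rfffc-CCC}) one has a genuine Fredholm theory and the VMO nature of $\nu$ kills the ``bad'' part of the Cauchy operator modulo compact operators, so that a Neumann-series / Fredholm-index argument closes; but on an unbounded $\delta$-{\rm AR} domain there is no compactness to spare, and one must instead show directly that $\|\nu\|_{[{\rm BMO}]^n}<\delta$ forces the commutator-type remainder to have small operator norm on $H^1$ and on {\rm BMO}, so that a literal Neumann series converges. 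This is exactly the estimate — commutators of Riesz transforms with $\nu$ acting as multiplication, controlled by the {\rm BMO} norm of $\nu$ via a Coifman--Rochberg--Weiss-style bound adapted to spaces of homogeneous type — that I expect to be the technical heart, and it is presumably where Theorem~\ref{yrFCC-TT.aat.TRa} does its real work; once that quantitative smallness is in hand, the rest is bookkeeping with the Clifford algebra identities and the standard $H^1$/{\rm BMO} functional-analytic duality on $(\partial\Omega,\sigma)$. Finally I would note that letting $\delta\to 0$, or specializing to a bounded boundary, recovers Theorems~\ref{u6g5rfffc-CCC.1111}--\ref{u6g5rfffc-CCC} and, in the flat case, the classical identities \eqref{u6gGVV.1}--\eqref{u6gGVV.2}, so the proof should be organized to make that limiting consistency transparent.
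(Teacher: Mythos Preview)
Your top-level structure matches the paper's: \eqref{7ggVV.UNB} is obtained by specializing a Clifford-algebra equivalence (the paper proves the unbounded-boundary version as Theorem~\ref{yrFCC-TT.aat.TRa.UNB}) to scalar $f$, and \eqref{utggG-TRFF.rt.A-CCC.UNB} then follows by a Hahn--Banach duality argument transposing $R_j$ against $R_j^{{}^{\rm mod}}$ (Theorem~\ref{u6g5rfffc.UNB}). The easy inclusions you attribute to Calder\'on--Zygmund mapping properties on {\rm UR} sets are also correct.

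Where your account goes astray is the mechanism behind the hard direction. You locate it in operator-theoretic invertibility: $\|\nu\|_{[{\rm BMO}]^n}<\delta$ forces Coifman--Rochberg--Weiss commutator remainders to have small norm, so a Neumann series inverts $\tfrac12 I+\mathfrak{C}$ on $H^1$ and ${\rm BMO}$. That is not what the paper does, and it is not clear such an argument would convert the $L^1$ hypotheses into $H^1$ membership. The paper's engine is PDE-based. Given $f\in L^1$ with $\mathfrak{C}f\in L^1$, one forms the monogenic extensions $u_\pm$ in $\Omega_\pm$; the crux is to upgrade $\mathcal{N}_\kappa u_\pm\in L^{1,\infty}$ to $\mathcal{N}_\kappa u_\pm\in L^1$ (Proposition~\ref{yrFCC-TT.UNB}). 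For this, $w:=|u|^\theta$ is subharmonic for some $\theta\in(0,1)$ (Stein--Weiss), and a comparison principle (Proposition~\ref{yrFCC.UND}) bounds $w$ pointwise by the solution $v$ of the $L^{1/\theta}$-Dirichlet problem for $\Delta$ with boundary datum $\big|u\big|^{{}^{\kappa-{\rm n.t.}}}_{\partial\Omega}\big|^\theta$. The $\delta$-{\rm AR} hypothesis enters precisely here: it guarantees solvability of the $L^p$-Dirichlet problem for the Laplacian (and, for the Green function driving the comparison, of the $L^{p',1}$-Regularity problem) for $p=1/\theta$ arbitrarily close to $1$ --- a range unavailable in generic Lipschitz or {\rm UR} domains. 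The genuinely new difficulties in the unbounded case (no embedding $L^{1/\theta,\infty}\hookrightarrow L^p$, behavior at infinity) are handled by working directly in weak Lebesgue/Lorentz spaces and by a geometric lemma (Lemma~\ref{NEW.COL.LEMMA}) controlling solid integrals over dyadic annuli by boundary nontangential maximal functions, not by commutator smallness.
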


The above theorem is a faithful generalization of the classical Fefferman-Stein representations in \cite{FeSt72}, 
as this reduces precisely to \eqref{u6gGVV.1} and \eqref{u6gGVV.2} in the ``flat'' case, i.e., when 
the set $\Omega$ is the upper-half space ${\mathbb{R}}^n_{+}$. Its proof is given at the end of \S\ref{S-4}.
As before, Theorem~\ref{u6g5rfffc-CCC.1111.UNB} implies that the Hardy space $H^1$ satisfies a natural ``maximality'' property, 
while the John-Nirenberg space ${\rm BMO}$ satisfies a natural ``minimality'' property, when considered on $\delta$-{\rm AR} surfaces with $\delta\in(0,1)$ 
sufficiently small. We also wish to remark that the characterization of $H^1$ given in \eqref{D+M.FEB} for families of functions
$f_1,\dots,f_n\in L^1(\partial\Omega,\sigma)$ continues to hold in the geometric setting considered in 
Theorem~\ref{u6g5rfffc-CCC.1111.UNB}. This is a new result even in the flat Euclidean setting; 
see Theorem~\ref{yrFCC-TT.aat.TRa.UNB} for a more general statement of this flavor. 

A cursory inspection of the proof of \eqref{u6gGVV.1} in \cite[(4.11), p.\,284]{GCRF85} reveals that this relies on the following 
key ingredients\footnote{the proof in \cite[Corollary~1, p.\,221]{St70} makes use of additional more specialized properties of 
harmonic functions in the upper-half space}:

(1) Convolution with a (smooth, decaying) approximation to the identity in ${\mathbb{R}}^{n-1}$ 
yields a family of linear operators uniformly bounded in $L^1({\mathbb{R}}^{n-1},{\mathcal{L}}^{n-1})$, convergent to the identity 
pointwise in $L^1({\mathbb{R}}^{n-1},{\mathcal{L}}^{n-1})$ as the scale tends to zero. 

(2) Given any $p\in(1,\infty)$, for a harmonic function in the upper-half space ${\mathbb{R}}^n_{+}$ its uniform $L^p$-integrability on all  
hyperplanes parallel to the boundary is equivalent with the $p$-th power integrability of the nontangential maximal operator of said function. 

(3) The Hardy space $H^1({\mathbb{R}}^{n-1},{\mathcal{L}}^{n-1})$ may be characterized in terms of nontangential traces of harmonic functions 
in the upper-half space ${\mathbb{R}}^n_{+}$ whose nontangential maximal operator belongs to $L^1({\mathbb{R}}^{n-1},{\mathcal{L}}^{n-1})$.

(4) The subharmonicity of certain subunitary powers of the modulus of any vector-valued function whose components satisfy the 
Generalized Cauchy-Riemann system in the upper-half space ${\mathbb{R}}^n_{+}$. 

(5) Riesz transforms map the Hardy space $H^1({\mathbb{R}}^{n-1},{\mathcal{L}}^{n-1})$ boundedly into $L^1({\mathbb{R}}^{n-1},{\mathcal{L}}^{n-1})$.

(6) For any $p\in(1,\infty)$ one can solve the $L^p$-Dirichlet boundary value problem for the Laplacian in the upper-half space ${\mathbb{R}}^n_{+}$ by 
simply convolving the boundary datum with the (classical) Poisson kernel. 

(7) Given an integrability exponent $p\in(1,\infty)$, any nonnegative subharmonic function in ${\mathbb{R}}^n_{+}$ which is uniformly $L^p$-integrable
on all hyperplanes parallel to the boundary may be dominated pointwise in ${\mathbb{R}}^n_{+}$ by the convolution of the Poisson kernel with a 
certain function in $L^p({\mathbb{R}}^{n-1},{\mathcal{L}}^{n-1})$.

Replacing the upper-half space ${\mathbb{R}}^n_{+}$ with an open set $\Omega\subseteq{\mathbb{R}}^n$, on which only general quantitative conditions 
(of a geometric measure theoretic nature) are imposed on its boundary, renders the tools described in (1)-(3) above hopelessly ineffective. 
Fortunately, property (4) survives intact, as this is of a purely algebraic/PDE (hence non-geometric) nature. 
Property (5) points to $\partial\Omega$ as a favorable geometric environment for singular integral operators of Calder\'on-Zygmund type. 
From the seminal work of G.~David and S.~Semmes \cite{DS1991} we therefore know that uniform rectifiability of $\partial\Omega$ is going to 
be a {\it sine qua non} condition.
As far as property (6) is concerned, one now has to circumvent the use of the Poisson kernel and find alternative means of solving the 
$L^p$-Dirichlet boundary value problem for the Laplacian in the set $\Omega$. From the breakthrough work of Dahlberg-Kenig in \cite{DaKe87} we know that in 
generic Lipschitz domains this restricts the integrability exponent $p$ to the interval $(2-\varepsilon,\infty)$ for some small 
$\varepsilon=\varepsilon(\Omega)\in(0,1)$. However, we presently require $p$ to be the reciprocal of the subunitary exponent appearing in (4), 
and this forces $p$ to stay close to $1$. The pioneering work of Fabes-Jodeit-Rivi\`ere \cite{FJR} guarantees that 
this may be accommodated by demanding that $\Omega$ is a bounded domain of class ${\mathscr{C}}^1$. Alternatively, one may ask 
for $\Omega$ to be an upper-graph Lipschitz domain with a suitably small Lipschitz constant. In view of \eqref{eq:sffs.WACO.D+M} and \eqref{tFFf.123}, 
it thus becomes apparent that focusing on infinitesimally flat {\rm AR} domains (withing the category of open sets with compact boundaries), 
as well as on $\delta$-{\rm AR} domains with $\delta\in(0,1)$ suitably small (withing the category of open sets with unbounded boundaries) offers a
fighting chance. Reassuringly, all these domains have uniformly rectifiable boundaries, so the prospect of having a version of (5) remains in play. 
We ultimately succeed, with a substantial amount of work going into salvaging a usable version of (7); see Proposition~\ref{yrFCC} and
Proposition~\ref{yrFCC.UND}. At the core of our approach is the ability to solve the $L^p$-Dirichlet boundary value problem for the Laplacian for any given
exponent $p\in(1,\infty)$ in any infinitesimally flat {\rm AR} domain, as well as in any $\delta$-{\rm AR} domain by appropriately restricting the size of the
parameter $\delta$, controlling the oscillations of the outward unit normal; see \cite[Chapter~8]{GHA.V}. 
In the bigger picture, the work undertaken here contributes a new perspective to the timeless question of determining the type of geometry 
guaranteeing a desired analytical property. 

\vskip 0.08in
\noindent{\it Acknowledgments.} The authors thank the referee for their insightful report, which has improved the article.

\section{Background Material}
\setcounter{equation}{0}
\label{S-2}

Throughout, ${\mathcal{L}}^n$ stands for the $n$-dimensional Lebesgue measure in ${\mathbb{R}}^n$, and 
${\mathcal{H}}^{n-1}$ denotes the $(n-1)$-dimensional Hausdorff measure in ${\mathbb{R}}^n$.\
We begin by recalling the following useful special family of cutoff functions constructed  
in \cite[Lemma~6.1.2, p.\,496]{GHA.I}.

\begin{lemma}\label{LhkC}
Let $\Omega$ be an open, nonempty, proper subset of ${\mathbb{R}}^n$, and for each
$\varepsilon>0$ introduce the {\rm (}open, one-sided{\rm )} collar neighborhood 
${\mathcal{O}}_{\,\varepsilon}$ of $\partial\Omega$ by setting
\begin{equation}\label{Tay-3}
{\mathcal{O}}_{\varepsilon}:=\big\{x\in\Omega:\,{\rm dist}\,(x,\partial\Omega)<\varepsilon\big\}.
\end{equation}

Then there exist a number $N>1$ and a family of functions $\{\Phi_{\varepsilon}\}_{\varepsilon>0}$ 
satisfying the following properties for each $\varepsilon>0$:
\begin{align}\label{VXT-12}
&\Phi_\varepsilon\in{\mathscr{C}}^{\,\infty}(\Omega),\,\,
{\rm supp}\,\,\Phi_\varepsilon\subseteq\Omega\setminus{\mathcal{O}}_{\varepsilon/N},
\,\,0\leq\Phi_\varepsilon\leq 1,\,\,\Phi_\varepsilon\equiv 1\,\text{ on }\,\Omega\setminus{\mathcal{O}}_{\varepsilon},
\\[4pt]
&\text{and }\,\sup_{x\in\Omega}\big|(\partial^\alpha\Phi_\varepsilon)(x)\big|\leq C_\alpha\,\varepsilon^{-|\alpha|}\,
\text{ for each multi-index }\,\alpha\in{\mathbb{N}}^n_0.
\label{VXT-13}
\end{align}
\end{lemma}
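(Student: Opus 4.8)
The plan is to realize each $\Phi_\varepsilon$ as a fixed smooth profile applied to a scale-adapted, mollified version of the distance function to $\partial\Omega$, and then to extract the estimates \eqref{VXT-12}--\eqref{VXT-13} from elementary properties of mollification. First, since $\Omega$ is a nonempty proper open subset of ${\mathbb{R}}^n$, the function $d(x):={\rm dist}\,(x,\partial\Omega)$ is finite and $1$-Lipschitz on all of ${\mathbb{R}}^n$, with $\|\nabla d\|_{L^\infty({\mathbb{R}}^n)}\leq 1$. I would fix once and for all a function $\phi\in{\mathscr{C}}^\infty({\mathbb{R}}^n)$ with $\phi\geq 0$, ${\rm supp}\,\phi\subseteq B(0,1)$, and $\int_{{\mathbb{R}}^n}\phi=1$; set $\phi_t(y):=t^{-n}\phi(y/t)$; choose a small constant $c\in(0,1)$ to be pinned down below; and, for each $\varepsilon>0$, define the smooth function $d_\varepsilon:=d*\phi_{c\varepsilon}$ on ${\mathbb{R}}^n$. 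Two estimates are then immediate: $|d_\varepsilon-d|\leq c\varepsilon$ pointwise (because $d$ is $1$-Lipschitz and $\phi_{c\varepsilon}$ is supported in $B(0,c\varepsilon)$), and, moving one derivative onto $d$, $\|\partial^\alpha d_\varepsilon\|_{L^\infty}=\|(\partial_i d)*\partial^{\alpha-e_i}\phi_{c\varepsilon}\|_{L^\infty}\leq\|\partial^{\alpha-e_i}\phi\|_{L^1}(c\varepsilon)^{1-|\alpha|}$ for every multi-index $\alpha$ with $|\alpha|\geq 1$ and a suitable index $i$.

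Next I would fix a profile $\theta\in{\mathscr{C}}^\infty({\mathbb{R}})$ with $0\leq\theta\leq 1$, $\theta\equiv 0$ on $(-\infty,a]$, and $\theta\equiv 1$ on $[b,\infty)$, where $N>1$, $c\in(0,1)$ and $0<a<b<1$ are arranged so that $\tfrac1N+c\leq a<b\leq 1-c$ (for instance $c=\tfrac18$, $N=8$, $a=\tfrac14$, $b=\tfrac12$), and set $\Phi_\varepsilon:=\big(\theta(d_\varepsilon/\varepsilon)\big)\big|_\Omega$. Then $\Phi_\varepsilon\in{\mathscr{C}}^\infty(\Omega)$ and $0\leq\Phi_\varepsilon\leq 1$ are clear. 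If $x\in\Omega\setminus{\mathcal{O}}_\varepsilon$ then $d(x)\geq\varepsilon$, so $d_\varepsilon(x)/\varepsilon\geq 1-c\geq b$ and $\Phi_\varepsilon(x)=1$; if $x\in{\mathcal{O}}_{\varepsilon/N}$ then $d(x)<\varepsilon/N$, so $d_\varepsilon(x)/\varepsilon<\tfrac1N+c\leq a$ and $\Phi_\varepsilon(x)=0$. Since ${\mathcal{O}}_{\varepsilon/N}$ is relatively open in $\Omega$, the latter forces ${\rm supp}\,\Phi_\varepsilon\subseteq\Omega\setminus{\mathcal{O}}_{\varepsilon/N}$, which completes \eqref{VXT-12}.

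For the derivative bounds \eqref{VXT-13} with $|\alpha|\geq 1$, I would invoke the Fa\`{a} di Bruno formula to write $\partial^\alpha\Phi_\varepsilon$ as a finite sum of terms $\theta^{(k)}(d_\varepsilon/\varepsilon)\prod_{j=1}^{k}\partial^{\beta_j}(d_\varepsilon/\varepsilon)$ with $\beta_1+\cdots+\beta_k=\alpha$ and each $|\beta_j|\geq 1$. Using $|\partial^{\beta_j}(d_\varepsilon/\varepsilon)|=\varepsilon^{-1}|\partial^{\beta_j}d_\varepsilon|\leq C_{\beta_j}\,c^{1-|\beta_j|}\varepsilon^{-|\beta_j|}$ from the second estimate of the first paragraph, together with $\|\theta^{(k)}\|_{L^\infty}<\infty$, each such term is bounded by a constant multiple of $c^{k-|\alpha|}\varepsilon^{-|\alpha|}$; since $c$ is fixed and the number of terms depends only on $\alpha$, this yields $\sup_{x\in\Omega}|\partial^\alpha\Phi_\varepsilon(x)|\leq C_\alpha\varepsilon^{-|\alpha|}$. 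Because $N$, $c$, $a$, $b$, $\phi$, $\theta$ are all chosen independently of $\varepsilon$, the family $\{\Phi_\varepsilon\}_{\varepsilon>0}$ enjoys the asserted uniform properties.

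The construction is elementary, and the only step requiring genuine care — which I would single out as the crux — is the bookkeeping that makes the estimates scale-invariant: one must tie the mollification radius to $\varepsilon$ (here $c\varepsilon$), check that the Lipschitz character of $d$ propagates to $\|\partial^\alpha d_\varepsilon\|_{L^\infty}\leq C_\alpha(c\varepsilon)^{1-|\alpha|}$, and then take $c$ small enough that the $O(c\varepsilon)$ discrepancy between $d$ and $d_\varepsilon$ still leaves room for the nesting $\tfrac1N+c\leq a<b\leq 1-c$ that forces $\Phi_\varepsilon$ to equal $0$ and $1$ on the correct collar neighborhoods.
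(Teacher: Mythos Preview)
Your proof is correct: mollifying the $1$-Lipschitz distance function at scale $c\varepsilon$ and composing with a fixed smooth profile $\theta$ is precisely the standard construction, and your bookkeeping for the support conditions and the Fa\`a di Bruno derivative estimates is accurate. The paper does not actually prove this lemma but merely quotes it from \cite[Lemma~6.1.2, p.\,496]{GHA.I}; your argument is the natural one and is essentially what that reference contains.
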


Let us now fix an arbitrary, open, nonempty, proper subset $\Omega$ of ${\mathbb{R}}^n$. Given any $\kappa\in(0,\infty)$, 
define the nontangential approach regions (to $\partial\Omega$ from within $\Omega$) of aperture parameter $\kappa$ by setting
\begin{equation}\label{NT-FF1}
\Gamma_{\kappa}(x):=\big\{y\in\Omega:\,|x-y|<(1+\kappa)\,{\rm dist}\,(y,\partial\Omega)\big\},\quad\forall\,x\in\partial\Omega.
\end{equation}
If $u:\Omega\to{\mathbb{R}}$ is an arbitrary Lebesgue measurable function 
define the {\tt nontangential} {\tt maximal} {\tt function} of $u$ with aperture $\kappa$ as 
\begin{equation}\label{LDG-2Rd.NNN}
{\mathcal{N}}_{\kappa}u:\partial\Omega\longrightarrow[0,+\infty],\quad
({\mathcal{N}}_{\kappa}u)(x):=\|u\|_{L^\infty(\Gamma_{\kappa}(x),{\mathcal{L}}^n)}\,
\text{ for all }\,x\in\partial\Omega.
\end{equation}
More generally, if $u:\Omega\to{\mathbb{R}}$ is a Lebesgue measurable function 
and $E\subseteq\Omega$ is a Lebesgue measurable set, we denote by ${\mathcal{N}}^E_{\kappa}u$ 
the nontangential maximal function of $u$ restricted to $E$, i.e., 
\begin{equation}\label{LDG-2AA}
\begin{array}{c}
{\mathcal{N}}^E_{\kappa}u:\partial\Omega\longrightarrow [0,+\infty]\,\,\text{ defined as}
\\[6pt]
({\mathcal{N}}^E_{\kappa}u)(x):=\|u\|_{L^\infty(\Gamma_{\kappa}(x)\cap E,{\mathcal{L}}^n)}\,\,\text{ for each }\,\,x\in\partial\Omega.
\end{array}
\end{equation}
Also, given any Lebesgue measurable function $u:\Omega\to{\mathbb{R}}$, we agree to abbreviate 
\begin{equation}\label{LDG-2Rd.NNN.eppp}
{\mathcal{N}}^\varepsilon_{\kappa}u:={\mathcal{N}}^{{\mathcal{O}}_{\varepsilon}}_{\kappa}u\,\,\text{ for each $\varepsilon>0$.}
\end{equation}

Continue to assume that $\Omega$ is an arbitrary open subset of ${\mathbb{R}}^n$ and for each $\kappa\in(0,\infty)$ define
\begin{equation}\label{UHha8yt4-1}
A_\kappa(\partial\Omega):=\big\{x\in\partial\Omega:\,x\in\overline{\Gamma_\kappa(x)}\,\big\},
\end{equation}
where the bar denotes topological closure. Informally, $A_\kappa(\partial\Omega)$ consists of those boundary 
points which are ``accessible'' in a nontangential fashion (specifically, from within nontangential 
approach regions with aperture $\kappa$). Following \cite[Definition~8.8.5, p.\,781]{GHA.I}, we make the following definition.

\begin{definition}\label{TdaBBa-MM}
Given a nonempty, open, proper subset $\Omega$ of ${\mathbb{R}}^n$, define its nontangentially accessible 
boundary as
\begin{equation}\label{ACC.ann-1}
\partial_{{}_{\rm nta}}\Omega:=\bigcap_{\kappa>0}A_{\kappa}(\partial\Omega)
=\big\{x\in\partial\Omega:\,x\in\overline{\Gamma_\kappa(x)}\,\text{ for each }\,\kappa>0\big\}.
\end{equation}
\end{definition}

Thus, by design, 
\begin{equation}\label{UTRFFa-ygg}
\partial_{{}_{\rm nta}}\Omega\subseteq A_\kappa(\partial\Omega)\,\,\text{ for any }\,\,\kappa>0.
\end{equation} 
The result below, implied by \cite[Proposition~8.8.6, p.\,782]{GHA.I}, describes a setting in which the nontangentially 
accessible boundary covers, up to an ${\mathcal{H}}^{n-1}$-null-set, the geometric measure theoretic 
boundary. Before stating it, recall that a closed set $\Sigma\subseteq{\mathbb{R}}^n$ is called an Ahlfors regular set if there exists a constant 
$C\in(1,\infty)$ with the property that for each $x\in\Sigma$ and each $r\in\big(0\,,\,2\,{\rm diam}\,\Sigma\big)$ one has
\begin{equation}\label{aamsnfsoidq.U}
C^{-1}r^{n-1}\leq\mathcal{H}^{n-1}\big(\Sigma\cap B(x,r)\big)\leq C r^{n-1}.
\end{equation}

\begin{proposition}\label{TdaBBa-DD}
If $\Omega$ is an open nonempty proper subset of ${\mathbb{R}}^n$ with the property that $\partial\Omega$ 
is an Ahlfors regular set, then 
\begin{equation}\label{UTRFFa}
{\mathcal{H}}^{n-1}\big(\partial_\ast\Omega\setminus\partial_{{}_{\rm nta}}\Omega\big)=0.
\end{equation} 
\end{proposition}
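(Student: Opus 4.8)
The plan is to unwind the definitions, reduce via the structure theory of sets of finite perimeter to a pointwise statement on the reduced boundary, and then exploit the quantitative flatness forced by Ahlfors regularity. Since $\Gamma_\kappa(x)$ increases with $\kappa$, so does $A_\kappa(\partial\Omega)$; hence $\partial_{{}_{\rm nta}}\Omega=\bigcap_{k\in{\mathbb{N}}}A_{1/k}(\partial\Omega)$ and it suffices to fix $\kappa>0$ and show ${\mathcal{H}}^{n-1}\big(\partial_\ast\Omega\setminus A_\kappa(\partial\Omega)\big)=0$. By the definition of $A_\kappa$, a point $x\in\partial\Omega$ lies outside $A_\kappa(\partial\Omega)$ precisely when there is $\rho>0$ with ${\rm dist}\,(y,\partial\Omega)\leq(1+\kappa)^{-1}|x-y|$ for every $y\in\Omega\cap B(x,\rho)$. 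Splitting over $\rho\in\{1/m:m\in{\mathbb{N}}\}$, it then suffices to prove, for each $\kappa>0$ and each $m\in{\mathbb{N}}$, that the set
\[
B_{\kappa,m}:=\Big\{x\in\partial_\ast\Omega:\ {\rm dist}\,(y,\partial\Omega)\leq\tfrac{|x-y|}{1+\kappa}\ \text{ for all }\ y\in\Omega\cap B\big(x,\tfrac1m\big)\Big\}
\]
is ${\mathcal{H}}^{n-1}$-null.

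Next I would invoke the structure theory of sets of finite perimeter. Ahlfors regularity forces ${\mathcal{H}}^{n-1}(\partial\Omega\cap K)<\infty$ for every compact $K\subseteq{\mathbb{R}}^n$, so, as $\partial_\ast\Omega\subseteq\partial\Omega$, Federer's criterion shows $\Omega$ has locally finite perimeter. Hence ${\mathcal{H}}^{n-1}(\partial_\ast\Omega\setminus\partial^\ast\Omega)=0$, where $\partial^\ast\Omega$ denotes the reduced boundary; moreover, by De Giorgi's structure theorem, at each $x\in\partial^\ast\Omega$ there is a measure-theoretic outward unit normal $\nu(x)$ for which $(\Omega-x)/r\to\{z\in{\mathbb{R}}^n:z\cdot\nu(x)<0\}$ in $L^1_{\rm loc}({\mathbb{R}}^n)$ as $r\to0^{+}$, $\Omega$ has Lebesgue density $\tfrac12$ at $x$, the surface measure ${\mathcal{H}}^{n-1}\lfloor\partial^\ast\Omega$ has unit $(n-1)$-dimensional density at $x$, and $\partial^\ast\Omega$ is $(n-1)$-rectifiable. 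Thus it is enough to show that ${\mathcal{H}}^{n-1}$-a.e. $x\in\partial^\ast\Omega$ avoids every $B_{\kappa,m}$; equivalently, writing
\[
\phi(x):=\limsup_{\Omega\ni\,y\to x}\frac{{\rm dist}\,(y,\partial\Omega)}{|x-y|}\in[0,1],
\]
that $\phi(x)=1$ for ${\mathcal{H}}^{n-1}$-a.e. $x\in\partial^\ast\Omega$.

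One range is elementary. If $\phi(x)<(1+\kappa)^{-1}$, then for all small $r$ every $y\in\Omega\cap B(x,r)$ obeys ${\rm dist}\,(y,\partial\Omega)\leq(1+\kappa)^{-1}|x-y|$; decomposing $\Omega\cap B(x,r)$ dyadically in $|x-y|$ and covering each relevant piece $\partial\Omega\cap B(x,2^{-j}r)$ by at most $C(n)\,C_0^{\,2}\,(1+\kappa)^{n-1}$ balls of radius $2^{-j}r/(1+\kappa)$ — which the \emph{upper} Ahlfors bound allows, $C_0$ being the regularity constant from \eqref{aamsnfsoidq.U} — gives ${\mathcal{L}}^n(\Omega\cap B(x,r))\leq C(n)\,C_0^{\,2}\,(1+\kappa)^{-1}r^n$. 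Comparing with the Lebesgue density $\tfrac12$ at $x\in\partial^\ast\Omega$ forces $\phi(x)\geq\lambda_0$ at \emph{every} $x\in\partial^\ast\Omega$, where $\lambda_0=\lambda_0(n,C_0)\in(0,1)$; in particular $B_{\kappa,m}=\varnothing$ whenever $1+\kappa>\lambda_0^{-1}$. The whole difficulty therefore lies in the range $0<\kappa\leq\lambda_0^{-1}-1$, where one truly needs $\phi=1$ ${\mathcal{H}}^{n-1}$-a.e.: the half-space blow-up and the Lebesgue-measure bookkeeping above only yield corkscrew points $y$ near $x$ with ${\rm dist}\,(y,\partial\Omega)/|x-y|$ bounded below by a constant depending on $C_0$, which does not suffice.

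I expect this to be the main obstacle, the decisive leverage coming from Ahlfors regularity of \emph{all} of $\partial\Omega$. Were $\{x\in\partial^\ast\Omega:\phi(x)<1-\tfrac1k\}$ of positive ${\mathcal{H}}^{n-1}$-measure for some $k$, one could restrict to a compact $K$ lying in some $B_{\kappa,m}$ (with $(1+\kappa)^{-1}$ just above $1-\tfrac1k$), of positive measure and small diameter, and pick $x_0\in K$ that is both a ${\mathcal{H}}^{n-1}\lfloor\partial^\ast\Omega$-density point of $K$ and a blow-up point. Any $\Omega$-point $y$ sitting deep in the half-space blow-up at scale $\rho$ then satisfies, for \emph{every} $x\in K\cap B(x_0,\rho)$, the inequality $|x-y|\geq(1-\tfrac{1}{2k})^{-1}{\rm dist}\,(y,\partial\Omega)$; so $K\cap B(x_0,\rho)$ avoids a ball about $y$ of radius strictly exceeding ${\rm dist}\,(y,\partial\Omega)$, hence a ball necessarily meeting $\partial\Omega$. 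Iterating over the dyadic scales $\rho\to0$ produces, near $x_0$, pieces of $\partial\Omega$ clustering towards $K$ at every scale; since each piece is ${\mathcal{H}}^{n-1}$-substantial (by the \emph{lower} Ahlfors bound it carries mass comparable to (its diameter)$^{n-1}$), summing over the infinitely many scales would make ${\mathcal{H}}^{n-1}(\partial\Omega\cap B(x_0,r))$ blow up as $r\to0$, contradicting the upper Ahlfors bound. Carrying out this dyadic accounting rigorously — tracking the geometry of the avoided balls and the overlaps among the obstacles across scales, so that the two Ahlfors inequalities genuinely collide — is the one substantive step; everything preceding it is either definitional or a direct appeal to the De Giorgi–Federer theory. (Alternatively, one may simply quote \cite[Proposition~8.8.6, p.\,782]{GHA.I}, where \eqref{UTRFFa} is established.)
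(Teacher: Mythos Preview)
The paper does not actually prove this proposition: it is simply quoted, with the sentence preceding it recording that it ``is implied by \cite[Proposition~8.8.6, p.\,782]{GHA.I}.'' Your parenthetical at the very end invokes exactly the same reference, so in that narrow sense your proposal matches the paper's treatment.

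Your additional attempt at a self-contained argument is a genuinely different route and is largely on target: the reduction to showing $\phi(x)=1$ for ${\mathcal{H}}^{n-1}$-a.e.\ $x\in\partial^\ast\Omega$ is correct, the use of De~Giorgi--Federer structure theory is appropriate, and you correctly isolate the crux as lying in the small-$\kappa$ range where the elementary corkscrew bound $\phi\geq\lambda_0$ no longer suffices. However, the final mechanism you describe---``summing over the infinitely many scales would make ${\mathcal{H}}^{n-1}(\partial\Omega\cap B(x_0,r))$ blow up''---is not the right contradiction, and as stated does not work: the boundary pieces you locate at dyadic scales $\rho_j$ each carry mass $\sim\rho_j^{n-1}$, but they live at different scales and need not be disjoint, so summing them does not force $\sigma(B(x_0,r))/r^{n-1}\to\infty$.

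The contradiction is available at a \emph{single} small scale, and the missing ingredient is Lebesgue differentiation for $\sigma:={\mathcal{H}}^{n-1}\lfloor\partial\Omega$. Since $\sigma$ is doubling (by Ahlfors regularity), at $\sigma$-a.e.\ $x_0\in K$ one has $\sigma\big((\partial\Omega\setminus K)\cap B(x_0,\rho)\big)=o(\rho^{n-1})$ as $\rho\to0$. But your construction produces, for every small $\rho$, a boundary point $z$ with ${\rm dist}\big(z,\,K\cap B(x_0,\rho)\big)\geq\kappa d$ and $d\approx\rho$; choosing parameters so that $B(z,\kappa d)\subset B(x_0,\rho)$, the lower Ahlfors bound gives $\sigma\big((\partial\Omega\setminus K)\cap B(x_0,\rho)\big)\geq\sigma\big(B(z,\kappa d)\big)\geq c\rho^{n-1}$, which already contradicts the density-point property. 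No iteration over scales is needed. (An equivalent phrasing: combine the De~Giorgi blow-up of ${\mathcal{H}}^{n-1}\lfloor\partial^\ast\Omega$ with the density-zero of $\partial\Omega\setminus\partial^\ast\Omega$ at $x_0$ to get weak-$\ast$ convergence of the rescaled $\sigma$ to the hyperplane measure; then a nearby boundary point deep in the half-space, amplified by lower Ahlfors regularity, puts mass where the limit has none.)
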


Next, we review the notion of nontangential boundary limit (or trace), following the exposition in \cite[\S8.9, p.\,786]{GHA.I}.

\begin{definition}\label{Dntl-1aa}
Let $\Omega\subset{\mathbb{R}}^n$ be an open set, and let $u$ be a Lebesgue measurable function defined ${\mathcal{L}}^n$-a.e. 
in $\Omega$. Fix an aperture parameter $\kappa>0$ and consider a point $x\in A_\kappa(\partial\Omega)$.
In this setting, whenever there exists a Lebesgue measurable set $N(x)\subset\Gamma_\kappa(x)$ 
with ${\mathcal{L}}^n(N(x))=0$ and such that the limit 
\begin{equation}\label{van-hTF}
\lim\limits_{(\Gamma_\kappa(x)\setminus N(x))\ni y\to x}u(y)
\end{equation}
exists in ${\mathbb{C}}^M$, the actual value of the limit is denoted by 
$\big(u\big|^{{}^{\kappa-{\rm n.t.}}}_{\partial\Omega}\big)(x)$.  
\end{definition}

In the context of the above definition, whenever $x\in A_\kappa(\partial\Omega)$ and the $\kappa$-nontangential limit
of $u$ at $x$ exists, it follows that for each $\varepsilon>0$ we have
\begin{equation}\label{abxi-gEE.u6f}
\left|\big(u\big|^{{}^{\kappa-{\rm n.t.}}}_{\partial\Omega}\big)(x)\right|
\leq\big({\mathcal{N}}^{\varepsilon}_{\kappa}u\big)(x)\leq\big({\mathcal{N}}_{\kappa}u\big)(x). 
\end{equation}

The following result is implied by combining \cite[Proposition~8.8.6, p.\,782]{GHA.I}, 
\cite[Lemma~8.9.2, p.\,789]{GHA.I}, and \cite[Proposition~8.9.5, pp.\,793--794]{GHA.I}.

\begin{proposition}\label{nont-ind-11-PP}
Let $\Omega\subseteq{\mathbb{R}}^n$ be an open set with a lower Ahlfors regular topological boundary satisfying 
$\mathcal{H}^{n-1}(\partial\Omega\setminus\partial_\ast\Omega)=0$, and such that 
$\sigma:={\mathcal{H}}^{n-1}\lfloor\partial\Omega$ is a doubling measure. 
Fix $\kappa>0$ and suppose $u:\Omega\to{\mathbb{R}}$ is a Lebesgue measurable function with the property that
\begin{equation}\label{u7hhb-i55-5-PP}
\parbox{7.60cm}{the nontangential limit $\big(u\big|^{{}^{\kappa-{\rm n.t.}}}_{\partial\Omega}\big)(x)$
exists {\rm(}in ${\mathbb{R}}${\rm )} for $\sigma$-a.e. point $x$ belonging to the set $\partial\Omega$.}
\end{equation}

Then 
\begin{equation}\label{u7hhb}
\parbox{8.20cm}{$f:=u\big|^{{}^{\kappa-{\rm n.t.}}}_{\partial\Omega}$ is a $\sigma$-measurable function on $\partial\Omega$, and
$\big({\mathcal{N}}_{\kappa}^{\,\varepsilon}u\big)(x)\rightarrow|f(x)|$ as $\varepsilon\to 0^{+}$ 
for $\sigma$-a.e. $x\in\partial\Omega$.}
\end{equation}
Moreover,  
\begin{equation}\label{nkc-CC-PP}
\parbox{8.70cm}{if ${\mathcal{N}}^{\delta}_{\kappa}u\in L^p(\partial\Omega,\sigma)$ 
for some $p\in(0,\infty)$ and $\delta>0$, then $f\in L^p(\partial\Omega,\sigma)$
and ${\mathcal{N}}_{\kappa}^{\varepsilon}u\longrightarrow|f|$ in 
$L^p(\partial\Omega,\sigma)$ as $\varepsilon\to 0^{+}$.}
\end{equation}
\end{proposition}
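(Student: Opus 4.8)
The plan is to prove Proposition~\ref{nont-ind-11-PP} by reducing everything to the pointwise monotone approximation statement $\big({\mathcal{N}}_\kappa^\varepsilon u\big)(x)\to|f(x)|$ as $\varepsilon\to0^+$, which is the substantive content; the $L^p$ assertion then follows by a soft measure-theoretic argument. First I would record two preliminary facts under the standing hypotheses (lower Ahlfors regularity of $\partial\Omega$, $\sigma$-negligibility of $\partial\Omega\setminus\partial_\ast\Omega$, and $\sigma$ doubling). One is that $\partial_{{}_{\rm nta}}\Omega$ exhausts $\partial_\ast\Omega$ up to an ${\mathcal{H}}^{n-1}$-null set, which is exactly Proposition~\ref{TdaBBa-DD}; combined with $\mathcal{H}^{n-1}(\partial\Omega\setminus\partial_\ast\Omega)=0$ this gives $\sigma\big(\partial\Omega\setminus\partial_{{}_{\rm nta}}\Omega\big)=0$. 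Hence $\sigma$-a.e. point of $\partial\Omega$ lies in $A_\kappa(\partial\Omega)$ for the given $\kappa$, so by \eqref{u7hhb-i55-5-PP} the nontangential trace $f(x)=\big(u\big|^{{}^{\kappa-{\rm n.t.}}}_{\partial\Omega}\big)(x)$ is defined at $\sigma$-a.e.\ $x$, and its $\sigma$-measurability follows from the cited \cite[Lemma~8.9.2]{GHA.I}.

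Next I would establish the monotone convergence ${\mathcal{N}}_\kappa^\varepsilon u\downarrow$-type statement pointwise. Fix a $\sigma$-generic $x$ at which the nontangential limit exists and equals $f(x)$; by definition there is a Lebesgue-null set $N(x)\subset\Gamma_\kappa(x)$ with $u(y)\to f(x)$ as $\Gamma_\kappa(x)\setminus N(x)\ni y\to x$. The quantities $\big({\mathcal{N}}_\kappa^\varepsilon u\big)(x)=\|u\|_{L^\infty(\Gamma_\kappa(x)\cap{\mathcal{O}}_\varepsilon,{\mathcal{L}}^n)}$ are nonincreasing in $\varepsilon$, and one always has $\big({\mathcal{N}}_\kappa^\varepsilon u\big)(x)\ge|f(x)|$ from \eqref{abxi-gEE.u6f}. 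For the reverse inequality in the limit, given $\eta>0$ pick $\rho>0$ so that $|u(y)-f(x)|<\eta$ for all $y\in\Gamma_\kappa(x)\setminus N(x)$ with $|y-x|<\rho$; since points of $\Gamma_\kappa(x)$ near $x$ satisfy ${\rm dist}(y,\partial\Omega)>|y-x|/(1+\kappa)$... actually the membership $y\in\Gamma_\kappa(x)$ forces ${\rm dist}(y,\partial\Omega)\le|y-x|$ as well, so for $\varepsilon$ small enough every $y\in\Gamma_\kappa(x)\cap{\mathcal{O}}_\varepsilon$ has $|y-x|$ small; more precisely one checks $\Gamma_\kappa(x)\cap{\mathcal{O}}_\varepsilon$ is contained in a ball about $x$ of radius $O(\varepsilon)$, whence for $\varepsilon$ small $\|u\|_{L^\infty(\Gamma_\kappa(x)\cap{\mathcal{O}}_\varepsilon)}\le|f(x)|+\eta$ (the Lebesgue-null set $N(x)$ being invisible to the essential supremum). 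This is precisely \cite[Proposition~8.9.5]{GHA.I}, which I would invoke rather than reprove; letting $\eta\downarrow0$ gives $\big({\mathcal{N}}_\kappa^\varepsilon u\big)(x)\to|f(x)|$, establishing \eqref{u7hhb}.

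For the final assertion \eqref{nkc-CC-PP}, suppose ${\mathcal{N}}_\kappa^\delta u\in L^p(\partial\Omega,\sigma)$ for some $p\in(0,\infty)$ and $\delta>0$. For $\varepsilon\in(0,\delta]$ we have the monotone domination $0\le{\mathcal{N}}_\kappa^\varepsilon u\le{\mathcal{N}}_\kappa^\delta u$ pointwise on $\partial\Omega$ (shrinking the truncation region only shrinks the essential supremum), so each ${\mathcal{N}}_\kappa^\varepsilon u\in L^p(\partial\Omega,\sigma)$; by the pointwise convergence just proved and Fatou (or directly, since $|f|$ is a pointwise $\sigma$-a.e.\ monotone decreasing limit of $L^p$ functions bounded by ${\mathcal{N}}_\kappa^\delta u$) we get $|f|\in L^p(\partial\Omega,\sigma)$, hence $f\in L^p(\partial\Omega,\sigma)$. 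Finally $\big|{\mathcal{N}}_\kappa^\varepsilon u-|f|\big|\le{\mathcal{N}}_\kappa^\delta u\in L^p$ for $\varepsilon\le\delta$, so Lebesgue's dominated convergence theorem (applied to the $p$-th powers when $p\ge1$, or to $\big|{\mathcal{N}}_\kappa^\varepsilon u-|f|\big|^p\le\big({\mathcal{N}}_\kappa^\delta u\big)^p$ directly when $0<p<1$, using $|a-b|^p\le a^p+b^p$) yields ${\mathcal{N}}_\kappa^\varepsilon u\to|f|$ in $L^p(\partial\Omega,\sigma)$ as $\varepsilon\to0^+$.

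I expect the genuine obstacle to be entirely contained in the pointwise step: verifying that shrinking ${\mathcal{O}}_\varepsilon$ pins the relevant part of $\Gamma_\kappa(x)$ into an arbitrarily small neighborhood of $x$, and doing so uniformly enough that the essential supremum over the truncated cone (ignoring the Lebesgue-null bad set $N(x)$ coming from the definition of nontangential trace) actually converges down to $|f(x)|$ rather than something larger. Since this is precisely the content of \cite[Proposition~8.9.5, pp.\,793--794]{GHA.I}, the cleanest route is to cite it; everything else — the measurability of $f$, the a.e.\ accessibility via Proposition~\ref{TdaBBa-DD}, and the $L^p$ passage to the limit — is routine given the stated hypotheses and the dominating function ${\mathcal{N}}_\kappa^\delta u$.
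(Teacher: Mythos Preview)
Your approach is essentially the same as the paper's: the paper does not give a self-contained proof either, but simply records that the proposition follows by combining \cite[Proposition~8.8.6]{GHA.I}, \cite[Lemma~8.9.2]{GHA.I}, and \cite[Proposition~8.9.5]{GHA.I}, which are exactly the three ingredients you invoke (the first via Proposition~\ref{TdaBBa-DD}), and your dominated-convergence argument for \eqref{nkc-CC-PP} is the routine completion. One small caveat: Proposition~\ref{TdaBBa-DD} as stated in this paper assumes full Ahlfors regularity of $\partial\Omega$, whereas the present hypotheses only give lower Ahlfors regularity plus doubling, so strictly speaking you should cite \cite[Proposition~8.8.6]{GHA.I} directly (which covers this more general setting) rather than the specialized restatement in Proposition~\ref{TdaBBa-DD}.
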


Here is another useful result (implied by \cite[Proposition~8.6.10, p.\,745]{GHA.I}):

\begin{proposition}\label{kiGa-615}
Fix $n\in{\mathbb{N}}$ and suppose $\Omega$ is an open nonempty proper subset of ${\mathbb{R}}^n$ with the property that  
$\partial\Omega$ is an Ahlfors regular set. Also, consider the measure $\sigma:={\mathcal{H}}^{n-1}\lfloor{\partial\Omega}$. 
Then for each $p\in(0,\infty)$ and $\kappa\in(0,\infty)$ there exists some 
$C=C(\partial\Omega,n,\kappa,p)\in(0,\infty)$ depending only on $n$, $\kappa$, $p$, 
and the Ahlfors regularity constants of $\partial\Omega$, with the property that, if 
\begin{align}\label{TWRUiga-3}
\varepsilon\in\big(0,\varepsilon_{\Omega,\kappa}\big)\,\,\text{ with }\,\,
\varepsilon_{\Omega,\kappa}:=\frac{{\rm diam}\,(\partial\Omega)}{n(2+\sqrt{n}\,)(3+2\kappa)}\in(0,+\infty],
\end{align}
for each ${\mathcal{L}}^n$-measurable function $u:\Omega\to{\mathbb{C}}$ one has
\begin{equation}\label{Tay-5}
\Big(\int_{{\mathcal{O}}_{\,\varepsilon}}|u|^p\,d{\mathcal{L}}^n\Big)^{1/p}\leq C\,\varepsilon^{1/p}\cdot
\big\|{\mathcal{N}}_{\kappa}^{\varepsilon}u\big\|_{L^p(\partial\Omega,\sigma)}.
\end{equation}
\end{proposition}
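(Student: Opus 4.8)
The plan is to derive \eqref{Tay-5} by slicing the collar neighborhood $\mathcal{O}_\varepsilon$ along level sets of the distance-to-boundary function and, on each slice, dominating the $L^p$-norm of $u$ by a constant multiple of $\|\mathcal{N}_\kappa^\varepsilon u\|_{L^p(\partial\Omega,\sigma)}$, then integrating in the slice parameter to gain the factor $\varepsilon^{1/p}$. First I would observe that every point $y\in\mathcal{O}_\varepsilon$ lies at distance $t:={\rm dist}\,(y,\partial\Omega)<\varepsilon$ from the boundary, so there is a (not necessarily unique) point $x\in\partial\Omega$ with $|x-y|=t$; in particular $|x-y|=t<(1+\kappa)\,{\rm dist}\,(y,\partial\Omega)$, so $y\in\Gamma_\kappa(x)\cap\mathcal{O}_\varepsilon$, which yields the pointwise bound $|u(y)|\le\big(\mathcal{N}_\kappa^\varepsilon u\big)(x)$. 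The role of the smallness restriction \eqref{TWRUiga-3} on $\varepsilon$ is to guarantee that such a nearest boundary point $x$ can be chosen to vary in a controlled (Borel-measurable, finite-to-bounded-overlap) manner over $\mathcal{O}_\varepsilon$, so that the assignment $y\mapsto x$ can be used to push the integral over $\mathcal{O}_\varepsilon$ down to an integral over $\partial\Omega$.

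Next I would set up the comparison between Lebesgue measure on the collar and the product of $\sigma$ with the one-dimensional parameter $t$. Concretely, using a Whitney-type or dyadic decomposition of $\mathcal{O}_\varepsilon$ into pieces $\{Q\}$ of size comparable to their distance to $\partial\Omega$, one has $\mathcal{L}^n(Q)\le C\,r_Q^{n-1}\cdot r_Q$ where $r_Q\approx{\rm dist}\,(Q,\partial\Omega)$, and the Ahlfors regularity \eqref{aamsnfsoidq.U} lets one compare $r_Q^{n-1}$ with $\sigma$ of the boundary patch $\Delta_Q$ of comparable size ``below'' $Q$; moreover each boundary patch is used by only boundedly many $Q$'s at a fixed scale. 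Summing $\int_Q|u|^p\,d\mathcal{L}^n\le\mathcal{L}^n(Q)\cdot\big(\mathcal{N}_\kappa^\varepsilon u\big)_{Q}^p$ (with the sup of $\mathcal{N}_\kappa^\varepsilon u$ over the relevant patch) over all $Q$ at scale $\approx 2^{-k}$ with $2^{-k}<\varepsilon$, and then summing a geometric series in $k$, produces
\begin{equation*}
\int_{\mathcal{O}_\varepsilon}|u|^p\,d\mathcal{L}^n\le C\sum_{2^{-k}\lesssim\varepsilon}2^{-k}\int_{\partial\Omega}\big(\mathcal{N}_\kappa^\varepsilon u\big)^p\,d\sigma\le C\,\varepsilon\,\big\|\mathcal{N}_\kappa^\varepsilon u\big\|_{L^p(\partial\Omega,\sigma)}^p,
\end{equation*}
which is exactly \eqref{Tay-5} after taking $p$-th roots. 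Raising $\mathcal{N}_\kappa^\varepsilon u$ to the power $p$ under the integral is harmless for any $p\in(0,\infty)$ since we are working with a single pointwise inequality and summing nonnegative quantities, so no convexity is needed and the statement genuinely holds for all positive $p$.

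The main obstacle I anticipate is the bookkeeping that makes the ``$\sigma$-side'' overlap bounded and scale-uniform: one must verify, using only Ahlfors regularity and the threshold \eqref{TWRUiga-3}, that the nearest-point projection (or a Whitney substitute for it) from the collar $\mathcal{O}_\varepsilon$ onto $\partial\Omega$ has bounded multiplicity at each fixed dyadic scale, and that the image boundary patches $\Delta_Q$ have $\sigma$-measure comparable to $r_Q^{n-1}$ uniformly — this is where the lower and upper regularity bounds in \eqref{aamsnfsoidq.U} and the constraint $r<2\,{\rm diam}\,\Sigma$ enter, and where the precise constant $\varepsilon_{\Omega,\kappa}$ is needed to keep all the balls involved within the admissible radius range. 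The rest is a geometric-series summation. Since this is cited as a consequence of \cite[Proposition~8.6.10, p.\,745]{GHA.I}, I would in practice simply invoke that reference, but the sketch above is the self-contained argument behind it.
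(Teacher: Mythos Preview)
The paper does not supply its own proof of this proposition; it merely records it as ``implied by \cite[Proposition~8.6.10, p.\,745]{GHA.I}'' and moves on. You noticed this and correctly said you would simply invoke that reference in practice. Your accompanying sketch---nearest-point projection giving the pointwise domination $|u(y)|\le\big(\mathcal{N}_\kappa^\varepsilon u\big)(x)$, a Whitney-type decomposition of the collar, Ahlfors regularity to match $r_Q^{n-1}$ with $\sigma(\Delta_Q)$, bounded overlap of the boundary patches, and a geometric sum in the dyadic scale to produce the factor $\varepsilon$---is the standard mechanism behind such collar estimates and is a faithful outline of what the cited result does. There is nothing further to compare against in the paper itself.
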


Given a Lebesgue measurable set $\Omega\subset\mathbb{R}^n$, define its geometric measure theoretic boundary as
\begin{align}\label{aakudgha.skdn}
&\hskip -0.10in
\partial_\ast\Omega:=\Big\{x\in\mathbb{R}^n:\,\limsup_{r\rightarrow 0^{+}}
\frac{\mathcal{L}^n\big(\Omega\cap B(x,r)\big)}{r^n}> 0\text{ and } 
\\[4pt]
&\hskip 1.20in
\limsup_{r\rightarrow 0^{+}} 
\frac{\mathcal{L}^n\big((\mathbb{R}^n\setminus\Omega)\cap B(x,r)\big)}{r^n}>0\Big\}.
\nonumber
\end{align}
Also, call $\Omega\subset\mathbb{R}^n$ a set of locally finite perimeter if 
$\Omega$ is Lebesgue measurable and
\begin{align}\label{aakudgha.skdn.2}
{\mathcal{H}}^{n-1}(\partial_\ast\Omega\cap K)<+\infty
\,\,\text{ for each compact }\,\,K\subset\mathbb{R}^n.
\end{align}

A central result in Geometric Measure Theory asserts that if $\Omega\subset\mathbb{R}^n$ 
is a set of locally finite perimeter then for ${\mathcal{H}}^{n-1}$-a.e. point $x\in\partial_\ast\Omega$ 
there exists a unique vector $\nu(x)\in S^{n-1}$, henceforth referred to as
the geometric measure theoretic outward unit normal to $\Omega$, with the property that 
\begin{align}\label{UN-1}
\lim\limits_{r\rightarrow 0^{+}}\,&
\frac{{\mathcal{L}}^n\big(B(x,r)\cap\{y\in\Omega:\,(y-x)\cdot\nu(x)>0\}\big)}
{{\mathcal{L}}^n\big(B(x,r)\big)}=0\mbox{ and }
\nonumber\\[6pt]
\lim\limits_{r\rightarrow 0^{+}}\,&
\frac{{\mathcal{L}}^n\big(B(x,r)\cap 
\{y\in{\mathbb{R}}^n\setminus\Omega:\,(y-x)\cdot\nu(x)<0\}\big)}{{\mathcal{L}}^n\big(B(x,r)\big)}=0
\end{align}
and, with $\mathbf{1}_\Omega$ denoting the characteristic function of $\Omega$, 
\begin{align}\label{UN-2}
\nabla\mathbf{1}_\Omega=-\nu\,{\mathcal{H}}^{n-1}\lfloor\partial_\ast\Omega
\end{align}
where the gradient of the locally integrable function $\mathbf{1}_\Omega$ is considered 
in the sense of distributions. In particular, unraveling definitions yields the following 
version of the Divergence Theorem due to De Giorgi-Federer (cf. \cite{Fed96}, \cite{EvGa92}).

\begin{theorem}\label{WRTP22}
Assume $\Omega\subseteq{\mathbb{R}}^n$ is a set of locally finite perimeter. Denote 
by $\nu$ its geometric measure theoretic outward unit normal and abbreviate
$\sigma:={\mathcal{H}}^{n-1}\lfloor\partial\Omega$. Then for each vector field 
$\vec{F}\in\big[{\mathscr{C}}_c^1({\mathbb{R}}^n)\big]^n$ one has
\begin{equation}\label{WRTP22-1}
\int_\Omega\big({\rm div}\,\vec{F}\big)\big|_{\Omega}\,d{\mathcal{L}}^n
=\int_{\partial_\ast\Omega}\nu\cdot\big(\vec{F}\big|_{\partial_\ast\Omega}\big)\,d\sigma.
\end{equation}
\end{theorem}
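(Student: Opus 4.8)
The plan is to derive \eqref{WRTP22-1} by a direct, componentwise unraveling of the distributional identity \eqref{UN-2}. The point is that essentially all the analytic substance has already been packaged into the structural facts recorded immediately before the statement: a set of locally finite perimeter carries a geometric measure theoretic outward unit normal $\nu$, and the distributional gradient of $\mathbf{1}_\Omega$ equals $-\nu\,{\mathcal{H}}^{n-1}\lfloor\partial_\ast\Omega$. In particular, because $\Omega$ has locally finite perimeter and $|\nu|=1$ on $\partial_\ast\Omega$, the right-hand side of \eqref{UN-2} is a (vector-valued) locally finite Radon measure; hence so is $\nabla\mathbf{1}_\Omega$, and it therefore acts not only on ${\mathscr{C}}_c^\infty({\mathbb{R}}^n)$ but, by approximating in the ${\mathscr{C}}^1$-norm on a fixed compact set, on every test vector field in $\big[{\mathscr{C}}_c^1({\mathbb{R}}^n)\big]^n$.

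With this in hand, the first step is to record the scalar identity obtained by pairing \eqref{UN-2} against an arbitrary $\varphi\in{\mathscr{C}}_c^1({\mathbb{R}}^n)$: for each $j\in\{1,\dots,n\}$,
\[
-\int_{\Omega}(\partial_j\varphi)\,d{\mathcal{L}}^n
=\big\langle\partial_j\mathbf{1}_\Omega,\varphi\big\rangle
=-\int_{\partial_\ast\Omega}\nu_j\,\varphi\,d\sigma ,
\]
where the left equality is the definition of the distributional derivative applied to the locally integrable function $\mathbf{1}_\Omega$, and the right equality uses \eqref{UN-2} together with the elementary observation that $\partial_\ast\Omega\subseteq\partial\Omega$ (so that integrating over $\partial_\ast\Omega$ with respect to ${\mathcal{H}}^{n-1}$ coincides with integrating with respect to $\sigma={\mathcal{H}}^{n-1}\lfloor\partial\Omega$). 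Cancelling signs yields $\int_{\Omega}(\partial_j\varphi)\,d{\mathcal{L}}^n=\int_{\partial_\ast\Omega}\nu_j\,\varphi\,d\sigma$ for every such $\varphi$.

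The second step is simply to apply the displayed identity componentwise. Writing $\vec{F}=(F_1,\dots,F_n)$ with each $F_j\in{\mathscr{C}}_c^1({\mathbb{R}}^n)$, taking $\varphi:=F_j$, and summing over $j$ — all integrands being compactly supported, so that every integral converges absolutely and the finite sum may be freely interchanged with the integral — we obtain
\[
\int_{\Omega}\big({\rm div}\,\vec{F}\big)\big|_{\Omega}\,d{\mathcal{L}}^n
=\sum_{j=1}^n\int_{\Omega}(\partial_j F_j)\,d{\mathcal{L}}^n
=\sum_{j=1}^n\int_{\partial_\ast\Omega}\nu_j\,F_j\,d\sigma
=\int_{\partial_\ast\Omega}\nu\cdot\big(\vec{F}\big|_{\partial_\ast\Omega}\big)\,d\sigma ,
\]
which is exactly \eqref{WRTP22-1}. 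There is no genuine obstacle in this argument; the only matters requiring minor care are the bookkeeping of signs in the definition of the distributional derivative, the justification that $\nabla\mathbf{1}_\Omega$ may be tested against merely ${\mathscr{C}}_c^1$ (rather than ${\mathscr{C}}_c^\infty$) fields, and the harmless identification of ${\mathcal{H}}^{n-1}\lfloor\partial_\ast\Omega$ with the restriction of $\sigma$ to $\partial_\ast\Omega$.
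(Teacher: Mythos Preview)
Your proposal is correct and matches the paper's own approach: the paper does not give a detailed proof of this classical result but simply remarks that ``unraveling definitions yields'' the divergence formula from the distributional identity \eqref{UN-2}, which is precisely what you have carried out explicitly and carefully.
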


A version of Theorem~\ref{WRTP22} for singular vector fields has been established in \cite[\S1]{GHA.I}. 
To state it, the reader is reminded that, given an open set $\Omega\subseteq{\mathbb{R}}^n$, 
by ${\mathcal{D}}'(\Omega)$ we denote the space of distributions in $\Omega$, and we let 
${\mathscr{E}}'(\Omega)$ stand for the subspace of ${\mathcal{D}}'(\Omega)$ consisting of 
those distributions in $\Omega$ which have compact support. We shall also let ${\mathscr{C}}^{\infty}_b(\Omega)$ 
stand for the space of smooth and bounded functions in $\Omega$ and denote by  
$\left({\mathscr{C}}^{\,\infty}_b(\Omega)\right)^\ast$ its algebraic dual (for more on this see \cite[\S4.6, p.\,329]{GHA.I}).

\begin{theorem}\label{BSA-V42-M+D}
Fix $n\in{\mathbb{N}}$ and let $\Omega$ be an open nonempty bounded subset of ${\mathbb{R}}^n$ with an Ahlfors regular boundary. 
In particular, $\Omega$ is a set of locally finite perimeter and, if $\sigma:={\mathcal{H}}^{n-1}\lfloor\partial\Omega$, then  
its geometric measure theoretic outward unit normal $\nu$ is defined $\sigma$-a.e. on $\partial_\ast\Omega$ {\rm (}which, up to 
a $\sigma$-nullset, is contained in $\partial_{{}_{\rm nta}}\Omega${\rm )}. Fix $\kappa\in(0,\infty)$ and assume that the vector 
field 
\begin{equation}\label{PDS-bv67-M+ahb}
\vec{F}=(F_1,\dots,F_n)\in\big[{\mathcal{D}}'(\Omega)\big]^n
\end{equation}
satisfies the following conditions:
\begin{equation}\label{PDS-bv67-M+D}
\begin{array}{c}
\text{there exists a compact set $K$ contained in $\Omega$ such that}
\\[6pt]
\vec{F}\big|_{\Omega\setminus K}\in\big[L^1_{\,\rm loc}(\Omega\setminus K,{\mathcal{L}}^n)\big]^n
\,\,\text{ and }\,\,
{\mathcal{N}}^{\Omega\setminus K}_{\kappa}\vec{F}\in L^1(\partial\Omega,\sigma),
\end{array}
\end{equation}
the pointwise nontangential boundary trace 
\begin{equation}\label{PDS-bv67-M+D.ihf}
\vec{F}\big|^{{}^{\kappa-{\rm n.t.}}}_{\partial\Omega}
=\Big(F_1\big|^{{}^{\kappa-{\rm n.t.}}}_{\partial\Omega},\dots,
F_n\big|^{{}^{\kappa-{\rm n.t.}}}_{\partial\Omega}\Big)
\,\text{ exists {\rm (}in ${\mathbb{C}}^n${\rm )} $\sigma$-a.e. on }\,\,\partial_{{}_{\rm nta}}\Omega
\end{equation}
and the divergence of $\vec{F}$, considered in the sense of distributions in $\Omega$, is 
the sum {\rm (}in ${\mathcal{D}}'(\Omega)${\rm )} of an absolutely integrable function in $\Omega$  
and a compactly supported distribution in $\Omega$, i.e., 
\begin{equation}\label{PDS-bv67-M+D.2}
{\rm div}\,\vec{F}\in L^1(\Omega,{\mathcal{L}}^n)+{\mathscr{E}}'(\Omega)\subseteq\left({\mathscr{C}}^{\,\infty}_b(\Omega)\right)^\ast.
\end{equation}

Then for any $\kappa'>0$ the nontangential trace $\vec{F}\big|^{{}^{\kappa'-{\rm n.t.}}}_{\partial\Omega}$ 
exists $\sigma$-a.e. on $\partial_{{}_{\rm nta}}\Omega$ and is actually independent of $\kappa'$.
Also, with the dependence on the aperture parameter dropped, one has
\begin{equation}\label{BDS-Bvx-M+D}
{}_{\left({\mathscr{C}}^{\infty}_b(\Omega)\right)^\ast}
\big({\rm div}\,\vec{F}\,,\,1\big){}_{{\mathscr{C}}^{\infty}_b(\Omega)}
=\int_{\partial_\ast\Omega}\nu\cdot
\big(\vec{F}\,\big|^{{}^{\rm n.t.}}_{\partial\Omega}\big)\,d\sigma.
\end{equation}
\end{theorem}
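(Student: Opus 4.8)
The plan is to deduce the formula from the classical De~Giorgi--Federer identity in Theorem~\ref{WRTP22} via a controlled cutoff exhaustion of $\Omega$ near $\partial\Omega$, using the functions $\{\Phi_\varepsilon\}_{\varepsilon>0}$ supplied by Lemma~\ref{LhkC}. By \eqref{PDS-bv67-M+D.2} write ${\rm div}\,\vec F=f+w$ with $f\in L^1(\Omega,{\mathcal L}^n)$ and $w\in{\mathscr E}'(\Omega)$, and enlarge the compact set $K$ from \eqref{PDS-bv67-M+D} so that, in addition, ${\rm supp}\,w\subseteq K$; then on $\Omega\setminus K$ the field $\vec F$ is a genuine $L^1_{\rm loc}$ function and ${\rm div}\,\vec F=f$ there. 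Fix $\varepsilon>0$ so small that $K\cap{\mathcal O}_\varepsilon=\emptyset$. Since $\Phi_\varepsilon\in{\mathscr C}^\infty_c(\Omega)$, the definition of the distributional divergence gives $\langle{\rm div}\,\vec F,\Phi_\varepsilon\rangle=-\langle\vec F,\nabla\Phi_\varepsilon\rangle$, and because $\nabla\Phi_\varepsilon$ is supported in $\overline{{\mathcal O}_\varepsilon}\setminus{\mathcal O}_{\varepsilon/N}\subseteq\Omega\setminus K$, the right-hand side is the absolutely convergent integral $-\int_{{\mathcal O}_\varepsilon}\vec F\cdot\nabla\Phi_\varepsilon\,d{\mathcal L}^n$. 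Evaluating the same pairing through $\big({\mathscr C}^\infty_b(\Omega)\big)^\ast$, and using $\Phi_\varepsilon\equiv 1$ near ${\rm supp}\,w$, gives $\langle{\rm div}\,\vec F,\Phi_\varepsilon\rangle=\int_\Omega f\,\Phi_\varepsilon\,d{\mathcal L}^n+\langle w,1\rangle$. Equating the two expressions and letting $\varepsilon\to 0^+$ (dominated convergence, $|f\Phi_\varepsilon|\le|f|$ and $\Phi_\varepsilon\to 1$ pointwise on $\Omega$) yields
\[
{}_{\left({\mathscr{C}}^{\infty}_b(\Omega)\right)^\ast}\big({\rm div}\,\vec{F}\,,\,1\big){}_{{\mathscr{C}}^{\infty}_b(\Omega)}=\lim_{\varepsilon\to 0^+}\Big(-\int_{{\mathcal O}_\varepsilon}\vec F\cdot\nabla\Phi_\varepsilon\,d{\mathcal L}^n\Big),
\]
so in particular this limit exists.

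Two pieces of uniform control are then recorded. First, the gradient bound \eqref{VXT-13} together with Proposition~\ref{kiGa-615} (with $p=1$) gives, for $\varepsilon$ small, $\int_{{\mathcal O}_\varepsilon}|\vec F|\,|\nabla\Phi_\varepsilon|\,d{\mathcal L}^n\lesssim\varepsilon^{-1}\int_{{\mathcal O}_\varepsilon}|\vec F|\,d{\mathcal L}^n\lesssim\big\|{\mathcal N}^{\varepsilon}_{\kappa}\vec F\big\|_{L^1(\partial\Omega,\sigma)}\le\big\|{\mathcal N}^{\Omega\setminus K}_{\kappa}\vec F\big\|_{L^1(\partial\Omega,\sigma)}$, so the collar integrals are bounded uniformly in $\varepsilon$. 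Second, by \eqref{nkc-CC-PP} the trace $g:=\vec F\big|^{{}^{\kappa-{\rm n.t.}}}_{\partial\Omega}$ lies in $[L^1(\partial\Omega,\sigma)]^n$ and is $\sigma$-measurable. The aperture-independence assertion is handled separately: a change-of-aperture bound for nontangential maximal functions (valid on Ahlfors regular sets, where $\sigma$ is doubling) shows ${\mathcal N}^{\Omega\setminus K}_{\kappa'}\vec F<\infty$ $\sigma$-a.e.\ for every $\kappa'>0$, whereupon the general theory of nontangential limits underlying Proposition~\ref{nont-ind-11-PP} forces $\vec F\big|^{{}^{\kappa'-{\rm n.t.}}}_{\partial\Omega}$ to exist $\sigma$-a.e.\ and coincide with $g$; henceforth write $\vec F\big|^{{}^{\rm n.t.}}_{\partial\Omega}:=g$.

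It remains to prove
\[
\lim_{\varepsilon\to 0^+}\Big(-\int_{{\mathcal O}_\varepsilon}\vec F\cdot\nabla\Phi_\varepsilon\,d{\mathcal L}^n\Big)=\int_{\partial_\ast\Omega}\nu\cdot g\,d\sigma .
\]
For calibration, observe that when $\vec F\in[{\mathscr C}^1_c({\mathbb R}^n)]^n$ all hypotheses hold trivially (with $K=\emptyset$, $w=0$, $g=\vec F\big|_{\partial\Omega}$), the left-hand side equals $\int_\Omega{\rm div}\,\vec F\,d{\mathcal L}^n$ by the first paragraph, and Theorem~\ref{WRTP22} identifies this with $\int_{\partial_\ast\Omega}\nu\cdot\vec F\,d\sigma$ — this fixes the normalization. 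For general $\vec F$ the mechanism is a blow-up/localization argument: at $\sigma$-a.e.\ $\xi\in\partial_\ast\Omega$ the set $\Omega$ blows up to the half-space $\{y:\,(y-\xi)\cdot\nu(\xi)<0\}$, while the existence of the nontangential trace means $\vec F(y)\to g(\xi)$ as $y\to\xi$ nontangentially; since $\nabla\Phi_\varepsilon$ is a smooth $\varepsilon^{-1}$-sized bump across the collar whose net flux in the inward normal direction equals $1$, the contribution to $-\int_{{\mathcal O}_\varepsilon}\vec F\cdot\nabla\Phi_\varepsilon$ of a small boundary cap about $\xi$ tends to $g(\xi)\cdot\nu(\xi)$ times the $\sigma$-measure of that cap, and the uniform $L^1$-domination from the previous paragraph licenses the passage to the limit globally (dominated convergence over $\partial\Omega$, in the spirit of \eqref{nkc-CC-PP}).

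I expect the rigorous implementation of this last step to be the principal obstacle. Turning the heuristic into a proof requires controlling — via the coarea formula applied to the Lipschitz function $x\mapsto{\rm dist}\,(x,\partial\Omega)$, in concert with the De~Giorgi--Federer structure of the reduced boundary (approximate tangent hyperplanes and rectifiability) — the weak behavior of the ``parallel'' surface measures ${\mathcal H}^{n-1}\lfloor\{{\rm dist}\,(\cdot,\partial\Omega)=t\}$ as $t\to 0^+$, together with the convergence of the slices of $\vec F$ over these level sets toward $g$ in the appropriate averaged sense; the Ahlfors regularity of $\partial\Omega$ (in particular that it is lower Ahlfors regular, satisfies ${\mathcal H}^{n-1}(\partial\Omega\setminus\partial_\ast\Omega)=0$, and carries a doubling $\sigma$) is what keeps these measures under control. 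The remaining points — the compatibility of the ${\mathcal D}'(\Omega)$ and $\big({\mathscr C}^\infty_b(\Omega)\big)^\ast$ pairings invoked in the first paragraph, and the fact that integration over $\partial_\ast\Omega$ already accounts for all of $\partial\Omega$ up to a $\sigma$-null set — are routine given the material recalled in Section~\ref{S-2}.
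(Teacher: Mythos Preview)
The paper does not prove this theorem at all: it is quoted as background, with the sentence ``A version of Theorem~\ref{WRTP22} for singular vector fields has been established in \cite[\S1]{GHA.I}'' immediately preceding the statement. So there is no in-paper proof to compare against; the result is imported wholesale from \cite{GHA.I}.

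That said, your overall architecture---pair ${\rm div}\,\vec F$ against the cutoff $\Phi_\varepsilon$, move the derivative onto $\Phi_\varepsilon$, and reduce matters to identifying $\lim_{\varepsilon\to 0^+}\big(-\int_{{\mathcal O}_\varepsilon}\vec F\cdot\nabla\Phi_\varepsilon\,d{\mathcal L}^n\big)$ with the boundary integral---is indeed the standard route and is the one taken in \cite{GHA.I}. The reduction in your first paragraph is clean and correct.

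The genuine gap is exactly where you locate it. Your blow-up heuristic for the final limit is not yet a proof, and the specific picture you paint (``$\nabla\Phi_\varepsilon$ is a smooth $\varepsilon^{-1}$-sized bump \dots\ whose net flux in the inward normal direction equals $1$'') is misleading: the cutoffs of Lemma~\ref{LhkC} are built by mollifying a Lipschitz function of ${\rm dist}(\cdot,\partial\Omega)$, so $\nabla\Phi_\varepsilon$ need not point normally, and on a merely Ahlfors regular boundary the level sets $\{{\rm dist}(\cdot,\partial\Omega)=t\}$ need not converge to $\partial\Omega$ in any measure-theoretic sense strong enough to make your ``parallel surfaces'' picture work directly. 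The actual argument (as in \cite{GHA.I}) does not attempt to identify the limit geometrically in one shot; rather, one first establishes the divergence formula for vector fields that are continuous up to the boundary (where Theorem~\ref{WRTP22} applies after extending by zero), and then passes to the general case by showing that the collar integral depends only on the nontangential trace, using the uniform $L^1$ domination you already recorded together with the pointwise convergence ${\mathcal N}^\varepsilon_\kappa(\vec F-\vec G)\to\big|\vec F|^{\rm n.t.}_{\partial\Omega}-\vec G|^{\rm n.t.}_{\partial\Omega}\big|$ from \eqref{u7hhb}. A smaller point: you assert ${\mathcal H}^{n-1}(\partial\Omega\setminus\partial_\ast\Omega)=0$ as a consequence of Ahlfors regularity, but this is \emph{not} among the hypotheses of the theorem (contrast Definition~\ref{def:AR}); the statement is formulated so that the trace is only required on $\partial_{\rm nta}\Omega$ and the integral is only over $\partial_\ast\Omega$, so you should not invoke Proposition~\ref{nont-ind-11-PP} in the form stated but rather the underlying pointwise facts, which do not need that hypothesis.
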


Moving on, following \cite{HoMiTa10}, we make the following definition.

\begin{definition}\label{def:AR}
Call $\Omega\subset\mathbb{R}^n$ an Ahlfors regular domain provided $\Omega$ is an open set with the property that 
$\partial\Omega$ is an Ahlfors regular set and 
\begin{equation}\label{7ggg}
\mathcal{H}^{n-1}(\partial\Omega\setminus\partial_\ast\Omega)=0.
\end{equation}
\end{definition}

Since any Ahlfors regular domain $\Omega\subset\mathbb{R}^n$ is a set of locally finite perimeter, the condition 
$\mathcal{H}^{n-1}(\partial\Omega\setminus\partial_\ast\Omega)=0$, stipulated in Definition~\ref{def:AR}, 
simply ensures that the geometric measure theoretic outward unit normal $\nu$ to $\Omega$ is defined at 
${\mathcal{H}}^{n-1}$-a.e. point of the topological boundary $\partial\Omega$.

We continue by introducing the notion of uniform rectifiability of G.~David and S.~Semmes. 
The following is a slight variant of the original definition in \cite{DS1991}. 

\begin{definition}\label{Def-unif.rect}
Call $\Sigma\subset{\mathbb{R}}^{n}$ a uniformly rectifiable {\rm (}{\rm UR}{\rm )} set
provided $\Sigma$ is closed, Ahlfors regular, and has Big Pieces of Lipschitz Images. 
The latter property signifies the existence of $\varepsilon>0$ and $M\in(0,\infty)$ such that, 
for each location $x\in\Sigma$ and each scale $r\in(0,{\rm diam}\,\Sigma)$, one can find a 
Lipschitz map $\Phi:B_{n-1}(0',r)\rightarrow{\mathbb{R}}^{n}$ {\rm (}as before, $B_{n-1}(0',r{\rm )}$ stands for the 
$(n-1)$-dimensional ball of radius $r$ centered at the origin $0'$ in ${\mathbb{R}}^{n-1}$), 
having Lipschitz constant $\leq M$, and with the property that
\begin{equation}\label{3.1.9a}
{\mathcal{H}}^{n-1}\Big(\Sigma\cap B(x,r)\cap\Phi\big(B_{n-1}(0',r)\big)\Big)\geq\varepsilon r^{n-1}.
\end{equation}
\end{definition}

In general, {\rm UR} sets can be quite wild, e.g., may have infinitely many spirals, 
holes, or handles, though not without certain restrictions. 
According to a deep result of G.~David and S.~Semmes (cf. \cite[Theorem, pp.\,10-14]{DS1991}),
\begin{equation}\label{u65f65i}
\parbox{9.70cm}{given a closed set $\Sigma\subseteq{\mathbb{R}}^n$ which is Ahlfors regular,
then $\Sigma$ is a {\rm UR} set if and only if the truncated 
singular integral operator $T_{k,\varepsilon}f(x):=\int_{y\in\Sigma\setminus B(x,\varepsilon)}
k(x-y)f(y)\,{\mathcal{H}}^{n-1}(y)$, for $x\in\Sigma$, is bounded on $L^2(\Sigma,{\mathcal{H}}^{n-1})$ 
with norm majorized by a constant independent of $\varepsilon>0$, if the kernel 
$k\in{\mathscr{C}}^{\infty}({\mathbb{R}}^n\setminus\{0\})$ is odd and satisfies
$\sup_{x\in{\mathbb{R}}^n\setminus\{0\}}|x|^{n-1+\ell}|(\nabla^\ell k)(x)|<+\infty$ 
for all $\ell\in{\mathbb{N}}_0$.}
\end{equation}
In a nutshell, the category of {\rm UR} sets constitutes the most general geometric setting where 
the theory of Calder\'on-Zygmund operators is comparable in power and scope with its classical 
counterpart in the entire Euclidean setting. 

As in \cite{HoMiTa10}, we also make the following definition.

\begin{definition}\label{def:UR}
Call $\Omega\subset\mathbb{R}^n$ a {\rm UR} domain provided $\Omega$ is an Ahlfors regular domain
{\rm (}in the sense of Definition~\ref{def:AR}{\rm )} and $\partial\Omega$ is a {\rm UR} set {\rm (}in the sense of Definition~\ref{Def-unif.rect}{\rm )}.
\end{definition}

More plainly, $\Omega$ is a {\rm UR} domain provided $\Omega$ is an open subset of $\mathbb{R}^n$ whose topological boundary is 
a {\rm UR} set, and whose geometric measure theoretic boundary $\partial_\ast\Omega$ has full ($(n-1)$-dimensional Hausdorff) measure in $\partial\Omega$, 
i.e., $\mathcal{H}^{n-1}(\partial\Omega\setminus\partial_\ast\Omega)=0$. Since any {\rm UR} domain $\Omega\subset\mathbb{R}^n$ 
is a set of locally finite perimeter, the latter condition simply ensures that the geometric measure theoretic outward unit normal 
$\nu$ to $\Omega$ is defined at ${\mathcal{H}}^{n-1}$-a.e. point of the topological boundary $\partial\Omega$.

The following definition (adopted from \cite[Definition~3.4.1, p.\,158]{GHA.V}) is central to our work.

\begin{definition}\label{t7aab-RFCa.WACO.1}
Fix $n\in{\mathbb{N}}$ with $n\geq 2$ and consider an Ahlfors regular domain $\Omega\subseteq{\mathbb{R}}^n$ 
with compact boundary. Abbreviate $\sigma:={\mathcal{H}}^{\,n-1}\lfloor\partial\Omega$ and denote by $\nu$ 
the geometric measure theoretic outward unit normal to $\Omega$. Also, let ${\rm VMO}(\partial\Omega,\sigma)$ stand for 
Sarason's space of functions with vanishing mean oscillations on $\partial\Omega$. Call $\Omega$ an 
{\tt infinitesimally} {\tt flat} {\rm AR} {\tt domain} {\rm (}or {\tt inf}-{\tt flat} {\rm AR}, for short{\rm )} 
provided\footnote{i.e., the Gauss map has vanishing mean oscillations}
\begin{equation}\label{eq:614151r.WACO.b.DIDA.sNEW.DRESS.2}
\nu\in\big[{\rm VMO}(\partial\Omega,\sigma)\big]^n.
\end{equation}
\end{definition}

We emphasize that, by definition, any infinitesimally flat {\rm AR} domain is an open set with compact boundary, 
so it is either bounded, or an exterior domain (i.e., the complement of a compact set).
One of the relevant aspects of Definition~\ref{t7aab-RFCa.WACO.1} is that, for an open set with compact boundary, 
being an infinitesimally flat {\rm AR} domain says much about its geometry. This is made precise in the theorem below, 
which is implied by \cite[Theorem~3.4.2, pp.\,160--161]{GHA.V}. 

\begin{theorem}\label{6trrf.TT.ccc.WACO.222.NEW.AR}
Suppose $\Omega\subseteq{\mathbb{R}}^n$ is an infinitesimally flat {\rm AR} domain. 
Then $\Omega$ is a two-sided {\rm NTA} domain, in the sense of Jerison-Kenig {\rm\cite{JeKe82}}.
Moreover, the set $\Omega$ is a {\rm UR} domain, and $\Omega$ satisfies a two-sided local John condition 
{\rm (}in the sense of {\rm\cite[Definition~3.12, p.\,2634]{HoMiTa10}}{\rm )}.  
In addition, $\Omega$ has finitely many connected components, which are separated {\rm (}i.e., have mutually disjoint 
closures{\rm )}, and the same is true for ${\mathbb{R}}^n\setminus\overline{\Omega}$. In particular, 
any connected component of $\Omega$ and ${\mathbb{R}}^n\setminus\overline{\Omega}$ is itself a two-sided {\rm NTA} domain. 
Finally, ${\mathbb{R}}^n\setminus\overline{\Omega}$ is also an infinitesimally flat {\rm AR} domain, sharing a common topological 
boundary with $\Omega$, and whose geometric measure theoretic outward unit normal is the opposite of that for $\Omega$.
\end{theorem}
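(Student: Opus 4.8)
The plan is to distill from the hypothesis $\nu\in\big[{\rm VMO}(\partial\Omega,\sigma)\big]^n$ a uniform, scale-invariant bilateral flatness statement for $\partial\Omega$, and then feed it into the now-standard structure theory of Reifenberg-flat Ahlfors regular domains. First I would establish a \emph{flatness-from-oscillation} principle: for $x\in\partial\Omega$ and small $r>0$, writing $\Delta:=B(x,r)\cap\partial\Omega$ and $\nu_{x,r}:=\frac{1}{\sigma(\Delta)}\int_{\Delta}\nu\,d\sigma$, and letting $H_{x,r}$ be the affine hyperplane through $x$ orthogonal to $\nu_{x,r}$, the formula \eqref{UN-2} together with the Divergence Theorem (Theorem~\ref{WRTP22}, applied to affine vector fields truncated by a smooth cutoff supported in $B(x,2r)$) and De Giorgi's structure theorem yields control of the ``tilt'' of $\partial\Omega$ inside $B(x,r)$ by the mean oscillation of $\nu$ over a fixed dilate of $\Delta$. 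Upgrading this weak (perimeter) flatness to genuine Hausdorff-distance flatness by means of the two-sided Ahlfors regularity \eqref{aamsnfsoidq.U} and the identity $\mathcal{H}^{n-1}(\partial\Omega\setminus\partial_\ast\Omega)=0$, and iterating over dyadic scales, one obtains a single radius $r_0>0$ and a modulus $\eta$ with $\eta(r)\to0$ as $r\to0^{+}$ such that
\[
{\rm dist}_{\mathcal{H}}\big(\partial\Omega\cap B(x,r)\,,\,H_{x,r}\cap B(x,r)\big)\le\eta(r)\,r\qquad\text{for all }x\in\partial\Omega,\ \ 0<r<r_0,
\]
and such that each ball $B(x,r)$ with $x\in\partial\Omega$, $0<r<r_0$, contains both an interior and an exterior corkscrew point of radius comparable to $r$. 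In other words, $\Omega$ is a Reifenberg-flat Ahlfors regular domain with vanishing flatness constant.

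Granting this, the remaining assertions follow from the now-standard theory. The two corkscrew conditions have just been recorded, and the Harnack chain condition is obtained by chaining corkscrew balls at consecutive dyadic scales using the vanishing flatness; hence $\Omega$ is a two-sided {\rm NTA} domain in the sense of \cite{JeKe82}, and in particular $\partial\Omega=\partial\big(\mathbb{R}^n\setminus\overline{\Omega}\big)$. Being Reifenberg flat with vanishing constant and Ahlfors regular, $\partial\Omega$ is a {\rm UR} set (cf. \cite{DS1991} and Definition~\ref{Def-unif.rect}), so $\Omega$ is a {\rm UR} domain. For the complement, $\mathbb{R}^n\setminus\overline{\Omega}$ is open and, by the two-sided corkscrew property, shares with $\Omega$ the compact Ahlfors regular boundary $\partial\Omega$; since $\nabla\mathbf{1}_{\mathbb{R}^n\setminus\overline{\Omega}}=-\nabla\mathbf{1}_\Omega$ in $\mathcal{D}'(\mathbb{R}^n)$, \eqref{UN-2} identifies its geometric measure theoretic outward unit normal as $-\nu$, and $-\nu\in\big[{\rm VMO}(\partial\Omega,\sigma)\big]^n$ precisely when $\nu$ is; thus $\mathbb{R}^n\setminus\overline{\Omega}$ is again an infinitesimally flat {\rm AR} domain. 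Next, around any of its points a connected component of $\partial\Omega$ contains a ``flat disk'' of radius $r_0$, hence carries $\sigma$-measure $\geq c\,r_0^{n-1}>0$; since $\partial\Omega$ is compact and Ahlfors regular, $\sigma(\partial\Omega)<\infty$, so there are only finitely many such components (and the same bound controls the components of $\mathbb{R}^n\setminus\overline{\Omega}$). If the closures of two distinct connected components of $\Omega$ (or of $\mathbb{R}^n\setminus\overline{\Omega}$) met at a point $p$, then at small scales about $p$ the set $\partial\Omega$ would contain two nearly parallel sheets, contradicting the displayed flatness once $\eta(r_0)$ is small; this gives the stated separation, and each component --- being an open set with compact Ahlfors regular boundary inheriting the flatness --- is itself two-sided {\rm NTA} by the same argument.

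Finally, for the two-sided local John condition (in the sense of \cite[Definition~3.12]{HoMiTa10}) I would run the usual scale-by-scale ``carrot path'' construction: starting at a boundary point $x$, select interior corkscrew points $z_k\in B(x,2^{-k})$ of radius $\sim 2^{-k}$ supplied by the flatness, join consecutive $z_k$ by straight segments that stay inside $\Omega$ precisely because $\eta(r_0)$ is small relative to the corkscrew aperture, and verify the non-tangency (John) inequality along the concatenated path; the exterior John paths are produced symmetrically from the exterior corkscrews. The step I expect to be genuinely delicate --- the technical heart of the matter --- is the very first one: the \emph{uniform} quantitative passage from the {\rm VMO}/perimeter flatness to honest bilateral Reifenberg flatness with a single radius $r_0$ good at every boundary point, where Ahlfors regularity and $\mathcal{H}^{n-1}(\partial\Omega\setminus\partial_\ast\Omega)=0$ are both indispensable and where one must additionally rule out the averaged normal $\nu_{x,r}$ ``flipping'' between comparable scales. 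All of this is carried out in full in \cite[Chapter~3]{GHA.V} (see \cite[Theorem~3.4.2]{GHA.V}), from which the theorem follows.
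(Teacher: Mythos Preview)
The paper does not actually prove this theorem: it is stated as background material, with the sentence preceding it noting that it ``is implied by \cite[Theorem~3.4.2, pp.\,160--161]{GHA.V}.'' Your proposal ends at the same place, invoking \cite[Theorem~3.4.2]{GHA.V}, so in that sense you and the paper agree.

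What you have added is a plausible outline of how that external proof goes --- passing from $\nu\in{\rm VMO}$ to vanishing bilateral Reifenberg flatness, and from there to two-sided {\rm NTA}, {\rm UR}, local John, component separation, and the symmetry of $\Omega$ and ${\mathbb{R}}^n\setminus\overline{\Omega}$. This is indeed the architecture used in \cite[Chapter~3]{GHA.V} (building on the Kenig--Toro and Hofmann--Mitrea--Taylor circle of ideas), and you have correctly identified the genuinely delicate step as the uniform passage from mean-oscillation control of $\nu$ to honest Hausdorff-distance flatness with a scale $r_0$ that works at every boundary point. Since the paper offers no proof to compare against beyond the citation, your sketch is consistent with --- and more informative than --- the paper's treatment.
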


Moving on, consider an arbitrary $N\times M$ homogeneous first-order system with constant complex coefficients 
in ${\mathbb{R}}^n$ (where $N,M\in{\mathbb{N}}$ are arbitrary)
\begin{equation}\label{u6TGFaf-y65t.1-HHH-FAT.DDD}
D=\Big(\sum_{j=1}^n a^{\alpha\beta}_j\partial_j\Big)_{\substack{1\leq\alpha\leq N\\ 1\leq\beta\leq M}}
\end{equation} 
and recall that its principal symbol is defined as the $N\times M$ matrix
\begin{equation}\label{u6TGFaf-y65t.1SSS-HHH-FAT}
{\rm Sym}\,(D\,;\xi):=i\Big(\sum_{j=1}^n a^{\alpha\beta}_j\xi_j\Big)_{\substack{1\leq\alpha\leq N\\ 1\leq\beta\leq M}}
\,\text{ for }\,\xi=(\xi_1,\dots,\xi_n)\in{\mathbb{R}}^n.
\end{equation}
Then $D$ is said to be {\tt injectively} {\tt elliptic} provided the mapping
\begin{equation}\label{u6TGFaf-y65t.1SSS-HHH-FAT.2}
{\rm Sym}\,(D\,;\xi):{\mathbb{C}}^M\longrightarrow{\mathbb{C}}^N
\,\,\text{ is injective whenever }\,\,\xi\in{\mathbb{R}}^n\setminus\{0\}.
\end{equation}

This notion is relevant in the context of the Fatou-type result recalled below from \cite[Theorem~3.1.6, pp.\,652--656]{GHA.III}.

\begin{theorem}\label{Main-T1-AAFF.HAR.2-FAT.x2}
Fix $n\in{\mathbb{N}}$ with $n\geq 2$, and suppose $\Omega\subseteq{\mathbb{R}}^{n}$ is an 
arbitrary {\rm UR} domain. Abbreviate $\sigma:={\mathcal{H}}^{\,n-1}\lfloor\partial\Omega$ 
and denote by $\nu$ the geometric measure theoretic outward unit normal to $\Omega$. 
Next, consider an injectively elliptic homogeneous first-order $N\times M$ system $D$ with 
constant complex coefficients in ${\mathbb{R}}^n$ {\rm (}where $N,M\in{\mathbb{N}}${\rm )}, 
and suppose $u:\Omega\to{\mathbb{C}}^M$ is a vector-valued function with Lebesgue measurable 
components satisfying, for some aperture parameter $\kappa\in(0,\infty)$,
\begin{equation}\label{u6ggBly5-HAR.HHH-FAT.x2}
\begin{array}{c}
{\mathcal{N}}_{\kappa}u\in L^p(\partial\Omega,\sigma)\,\,\text{ with }\,\,p\in\big(\tfrac{n-1}{n}\,,\infty\big),
\\[6pt]
\text{and }\,\,Du=0\,\,\text{ in }\,\,[{\mathcal{D}}\,'(\Omega)]^N.
\end{array}
\end{equation}
Then the nontangential boundary trace
\begin{equation}\label{u6ggBly5-HAR.HHH-FAT.iii.x2}
\begin{array}{c}
u\big|^{{}^{\kappa-{\rm n.t.}}}_{\partial\Omega}
\,\,\text{ exists {\rm (}in ${\mathbb{C}}^M${\rm )} at $\sigma$-a.e. point on }\,\,\partial\Omega\,\,\text{ and}
\\[4pt]
\text{the function $u\big|^{{}^{\kappa-{\rm n.t.}}}_{\partial\Omega}$ is $\sigma$-measurable on $\partial\Omega${\rm ;}}
\\[4pt]
\text{also, $u\big|^{{}^{\kappa-{\rm n.t.}}}_{\partial\Omega}$ belongs to the space 
$\big[L^p(\partial\Omega,\sigma)\big]^M$}
\\[4pt]
\text{and one has }\,\,\big\|u\big|^{{}^{\kappa-{\rm n.t.}}}_{\partial\Omega}\big\|_{[L^p(\partial\Omega,\sigma)]^M}
\leq\|{\mathcal{N}}_{\kappa}u\|_{L^p(\partial\Omega,\sigma)}.
\end{array}
\end{equation}
Moreover, for any other given aperture parameter $\kappa'\in(0,\infty)$ the nontangential 
boundary trace $u\big|^{{}^{\kappa'-{\rm n.t.}}}_{\partial\Omega}$ also exists, and agrees with 
$u\big|^{{}^{\kappa-{\rm n.t.}}}_{\partial\Omega}$, at $\sigma$-a.e. point on $\partial\Omega$. 
\end{theorem}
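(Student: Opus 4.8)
The plan is to deduce this Fatou-type statement — whose full proof is carried out in \cite[Theorem~3.1.6, pp.\,652--656]{GHA.III} — from the algebraic content of injective ellipticity together with the interior regularity theory for constant-coefficient elliptic operators, transplanted to $\partial\Omega$ by means of the Ahlfors regularity of the boundary and the geometric facts recorded above. \emph{First}, I would reduce to a scalar elliptic equation. Because ${\rm Sym}\,(D\,;\xi):{\mathbb{C}}^M\to{\mathbb{C}}^N$ is injective for every $\xi\in{\mathbb{R}}^n\setminus\{0\}$, the $M\times M$ matrix $A(\xi):={\rm Sym}\,(D\,;\xi)^{\ast}\,{\rm Sym}\,(D\,;\xi)$ is positive definite, hence invertible, with entries that are homogeneous polynomials of degree $2$ in the real variables $\xi_1,\dots,\xi_n$; writing ${\rm adj}$ for the adjugate matrix, one has $\big({\rm adj}\,A(\xi)\big)\,{\rm Sym}\,(D\,;\xi)^{\ast}\,{\rm Sym}\,(D\,;\xi)=\big(\det A(\xi)\big)\,I_M$, and $\det A(\xi)$ is a homogeneous polynomial of degree $2M$ that does not vanish on ${\mathbb{R}}^n\setminus\{0\}$. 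Passing from symbols to differential operators yields a scalar, constant-coefficient, homogeneous \emph{elliptic} operator $L$ of order $2M$ together with a constant-coefficient operator ${\mathcal{E}}$ such that ${\mathcal{E}}D=L\,I_M$; hence $Du=0$ in $[{\mathcal{D}}'(\Omega)]^N$ forces $Lu=0$ in $[{\mathcal{D}}'(\Omega)]^M$, so that (by interior regularity for elliptic operators) $u$ is smooth in $\Omega$ and its scalar components obey the full battery of interior estimates for $L$.

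\emph{Next}, I would transplant those interior estimates to the boundary. For each $x\in\Omega$, writing $\delta(x):={\rm dist}\,(x,\partial\Omega)$, elliptic regularity for $L$ gives, for any exponent $q\in(0,\infty)$,
\[
|u(x)|+\delta(x)\,\big|(\nabla u)(x)\big|\,\leq\,C\,\Big(\aver{B(x,c\,\delta(x))}|u|^{q}\,d{\mathcal{L}}^n\Big)^{1/q},
\]
where $c=c(\kappa)\in(0,1)$ is chosen small enough that $B(x,c\,\delta(x))\subseteq\Gamma_\kappa(y)$ for every boundary point $y$ in a surface ball of radius $\approx\delta(x)$ centered at a point $x_0\in\partial\Omega$ nearest to $x$. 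Taking $q=p$ and using the Ahlfors regularity of $\partial\Omega$, the right-hand side is dominated by $C\big({\mathcal{M}}_\sigma\big[({\mathcal{N}}_\kappa u)^{p}\big](x_0)\big)^{1/p}$, with ${\mathcal{M}}_\sigma$ the Hardy--Littlewood maximal operator on the space of homogeneous type $(\partial\Omega,\sigma)$. Since $({\mathcal{N}}_\kappa u)^{p}\in L^1(\partial\Omega,\sigma)$, this control is finite $\sigma$-a.e.\ on $\partial\Omega$; in particular $u$ is locally bounded near $\sigma$-a.e.\ boundary point, and satisfies the scale-invariant gradient bound $\big|(\nabla u)(y)\big|\lesssim\delta(y)^{-1}\,\big({\mathcal{M}}_\sigma[({\mathcal{N}}_\kappa u)^p](x)\big)^{1/p}$ for $y\in\Gamma_\kappa(x)$.

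\emph{Third}, I would establish the existence of the nontangential trace of $u$ at $\sigma$-a.e.\ point of $\partial\Omega$ — the technical heart of the argument. Here the naive estimate (the oscillation of $u$ across a dyadic shell of $\Gamma_\kappa(x)$ is at most the diameter of the shell times the supremum of $|\nabla u|$ on it) does not sum, so one proceeds instead through a Green-type representation: applied to a vector field built from $u$ and a fundamental solution of $L$, the divergence formula for singular vector fields (Theorem~\ref{BSA-V42-M+D}) — used on a sequence of subdomains exhausting $\Omega$, which resolves the circularity inherent in such representations — expresses $u$ near $\partial\Omega$ as a superposition of layer-potential-type operators acting on boundary data controlled by ${\mathcal{N}}_\kappa u$, plus a remainder that vanishes nontangentially; the known boundary behavior (jump relations, nontangential maximal estimates) of such singular integrals on the uniformly rectifiable set $\partial\Omega$ then produces the nontangential limit, and it is in keeping these Hardy-space-type boundary estimates available that the restriction $p>\tfrac{n-1}{n}$ enters. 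By Proposition~\ref{TdaBBa-DD}, $\partial_{{}_{\rm nta}}\Omega$ covers $\partial_\ast\Omega$ up to a $\sigma$-null set, and since $\Omega$ is an Ahlfors regular domain the resulting trace $f$ is thus defined $\sigma$-a.e.\ on all of $\partial\Omega$; its $\sigma$-measurability, the membership $f\in[L^p(\partial\Omega,\sigma)]^M$, the convergence ${\mathcal{N}}^{\varepsilon}_\kappa u\to|f|$ as $\varepsilon\to 0^{+}$, and — combined with ${\mathcal{N}}^{\varepsilon}_\kappa u\leq{\mathcal{N}}_\kappa u$ and \eqref{abxi-gEE.u6f} — the inequality $\|f\|_{[L^p(\partial\Omega,\sigma)]^M}\leq\|{\mathcal{N}}_\kappa u\|_{L^p(\partial\Omega,\sigma)}$ then follow at once from Proposition~\ref{nont-ind-11-PP}. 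Finally, the scale-invariant gradient bound shows the value of the limit is unchanged along $\Gamma_{\kappa'}(x)$ for every $\kappa'\in(0,\infty)$ (alternatively, one invokes the aperture-independence results for nontangential traces in \cite{GHA.I}), which gives the last assertion.

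I expect the main obstacle to be precisely this third step in the subunitary range $p\in(\tfrac{n-1}{n},1)$: the soft oscillation argument fails, and one is forced to build the representation formula, control the limiting procedure over exhausting subdomains, and handle layer potentials in Hardy-type scales on a boundary that carries no local graph structure — which is where the bulk of the work in \cite[Theorem~3.1.6]{GHA.III} is concentrated. A minor but genuine technical point is that, when the coefficients $a^{\alpha\beta}_j$ are complex, the reduction in the first step must be carried out with the adjoint ${\rm Sym}\,(D\,;\xi)^{\ast}$ (which, since $\xi$ is real, still has polynomial entries) rather than the transpose, so that $A(\xi)$ is genuinely positive definite and $\det A(\xi)$ is nonvanishing off the origin.
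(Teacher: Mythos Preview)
The paper does not actually prove this theorem: it is stated in \S\ref{S-2} as background, explicitly ``recalled below from \cite[Theorem~3.1.6, pp.\,652--656]{GHA.III}'', and no proof is given here. So there is no ``paper's own proof'' to compare your proposal against; what you have written is a sketch of the argument one would expect to find in the cited reference, and you say as much in your opening sentence.

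That said, your outline is a reasonable roadmap of the standard strategy. The algebraic reduction via $A(\xi)={\rm Sym}\,(D;\xi)^\ast\,{\rm Sym}\,(D;\xi)$ and its adjugate is the correct way to pass from $Du=0$ to $Lu=0$ for a scalar elliptic $L$ of order $2M$, and the interior estimates you write are the natural input. You are also right that the existence of the nontangential trace in the range $p\in\big(\tfrac{n-1}{n},1\big)$ is the crux, and that it is handled by an integral representation combined with the Calder\'on--Zygmund theory on {\rm UR} sets rather than by a naive oscillation bound. One small caveat: Theorem~\ref{BSA-V42-M+D}, which you invoke, is stated only for \emph{bounded} $\Omega$, whereas the present theorem allows arbitrary {\rm UR} domains; the exhaustion you mention would have to be by bounded subdomains with Ahlfors regular boundaries on which the divergence formula applies, and this localization is part of the work in \cite{GHA.III}. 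Apart from that, your sketch is consistent with the structure of the argument in the reference.
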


The {\tt Clifford} {\tt algebra} with $n$ imaginary units is
the minimal enlargement of ${\mathbb{R}}^{n}$ to a unitary real algebra $({\mathcal{C}}\!\ell_{n},+,\odot)$, 
which is not generated (as an algebra) by any proper subspace of ${\mathbb{R}}^{n}$, and such that
\begin{equation}\label{X-sqr}
x\odot x=-|x|^2\,\,\text{ for each }\,\,x\in{\mathbb{R}}^{n}\hookrightarrow{\mathcal{C}}\!\ell_{n}.
\end{equation}
This identity is equivalent to the demand that, if $\{{\mathbf{e}}_j\}_{1\leq j\leq n}$ is
the standard orthonormal basis in ${\mathbb{R}}^{n}$, then
\begin{equation}\label{im-e}
{\mathbf{e}}_j\odot{\mathbf{e}}_j=-1\,\,\text{ and }\,\,
{\mathbf{e}}_j\odot{\mathbf{e}}_k=-{\mathbf{e}}_k\odot{\mathbf{e}}_j
\,\,\text{ whenever }\,1\leq j\neq k\leq n.
\end{equation}
In particular, identifying the canonical basis $\{{\mathbf{e}}_j\}_{1\leq j\leq n}$ 
from ${\mathbb{R}}^{n}$ with the $n$ imaginary units generating ${\mathcal{C}}\!\ell_{n}$, yields the embedding
\begin{equation}\label{embed}
{\mathbb{R}}^{n}\hookrightarrow{\mathcal{C}}\!\ell_{n},\qquad
{\mathbb{R}}^{n}\ni x=(x_1,\dots,x_{n})\equiv
\sum_{j=1}^{n}x_j\,{\mathbf{e}}_j\in{\mathcal{C}}\!\ell_{n}.
\end{equation}
For more on this topic the reader is referred to \cite{BDS}, \cite{GiMu91}, \cite[\S6.4, p.\,520]{GHA.I}, \cite{Mi}.
An example of a basic differential operator in this context is offered by the Dirac operator 
\begin{equation}\label{eq:iGfva}
D:=\sum_{j=1}^n{\mathbf{e}}_j\odot\partial_j.
\end{equation}
This is injectively elliptic and acts naturally on Clifford algebra-valued functions. 
Traditionally, ${\mathcal{C}}\!\ell_{n}$-valued functions which are null-solutions of $D$ are 
called monogenic. 

Thus, the Dirac operator $D$ from \eqref{eq:iGfva} naturally presents itself as a candidate for the 
(homogeneous, constant coefficient) first-order operator with injective principal symbol intervening 
in the statement of Theorem~\ref{Main-T1-AAFF.HAR.2-FAT.x2}. For this choice, Theorem~\ref{Main-T1-AAFF.HAR.2-FAT.x2} 
then implies the following Fatou-type result:
\begin{equation}\label{iah-a8-DDir}
\parbox{9.70cm}{for any monogenic function $u$ in a {\rm UR} domain $\Omega\subseteq{\mathbb{R}}^n$ 
satisfying $\int_{\partial\Omega}\big({\mathcal{N}}_{\kappa}u\big)^p\,d{\mathcal{H}}^{n-1}<+\infty$,
with $\kappa\in(0,\infty)$ and $p\in\big(\tfrac{n-1}{n}\,,\,\infty\big)$, the trace
$u\big|^{{}^{\kappa-{\rm n.t.}}}_{\partial\Omega}$ exists in ${\mathcal{C}}\!\ell_{n}$ at 
${\mathcal{H}}^{n-1}$-a.e. point on $\partial\Omega$.}
\end{equation}

\section{Proofs of Main Results: Domains with Compact Boundaries}
\setcounter{equation}{0}
\label{S-3}
 
We begin by establishing a foundational comparison result, which refines work by E.~Fabes and C.~Kenig in \cite{FaKe81}. 
This may be thought of as a global, geometrically inclusive, version of a well-known local characterization of subharmonicity. 

\begin{proposition}\label{yrFCC}
Fix $n\in{\mathbb{N}}$, $n\geq 2$, and let $\Omega$ be an infinitesimally 
flat {\rm AR} domain in $\mathbb{R}^{n}$ {\rm (}cf. Definition~\ref{t7aab-RFCa.WACO.1}{\rm )}. 
Abbreviate $\sigma:={\mathcal{H}}^{\,n-1}\lfloor\partial\Omega$, and fix some aperture 
parameter $\kappa>0$ along with an integrability exponent $p\in(1,\infty)$. In this setting, let $u$ be 
a harmonic function in $\Omega$ satisfying ${\mathcal{N}}_{\kappa}u\in L^p(\partial\Omega,\sigma)$, and consider
a continuous subharmonic function $w$ in $\Omega$ with the property that ${\mathcal{N}}_{\kappa}w\in L^p(\partial\Omega,\sigma)$ 
and\,\footnote{the fact that the nontangential trace $u\big|^{{}^{\kappa-{\rm n.t.}}}_{\partial\Omega}$ exists 
at $\sigma$-a.e. point $\partial\Omega$ is a consequence of \cite[Proposition~5.5.3, p.\,856]{GHA.III}}
\begin{equation}\label{ih6g6gfr5}
\Big(w\big|^{{}^{\kappa-{\rm n.t.}}}_{\partial\Omega}\Big)(x)\,\text{ exists and equals }\,
\Big(u\big|^{{}^{\kappa-{\rm n.t.}}}_{\partial\Omega}\Big)(x)\,\text{ for $\sigma$-a.e. }\,x\in\partial\Omega.
\end{equation} 
In the case when $\Omega$ is an exterior domain\footnote{i.e., the complement of a compact set} also assume  
\begin{equation}\label{utrFC-XXX-2D.777.WACO.FDD}
\begin{array}{c}
\text{the function $w$ satisfies $w(x)=o(1)$ as $|x|\to\infty$, and}
\\[6pt]
\text{if $n=2$ there exists $c\in{\mathbb{R}}$ such that }\,\,
u(x)=c+O(|x|^{-1})\,\,\text{ as }\,\,|x|\to\infty,
\\[6pt]
\text{while in the case when $n\geq 3$ one assumes $u(x)=O(|x|^{2-n})$ as $|x|\to\infty$}.
\end{array}
\end{equation}

Then $w\leq u$ in $\Omega$.
\end{proposition}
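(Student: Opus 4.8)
The plan is to establish the inequality $w\le u$ in $\Omega$ by a comparison/maximum-principle argument applied to the subharmonic function $v:=w-u$, combined with the geometric and analytic leverage afforded by the infinitesimal flatness of $\Omega$. Since $u$ is harmonic and $w$ is continuous subharmonic, $v=w-u$ is upper semicontinuous and subharmonic in $\Omega$, and by \eqref{ih6g6gfr5} its nontangential trace on $\partial\Omega$ vanishes $\sigma$-a.e. The difficulty is that subharmonic functions need not attain a maximum principle merely from a.e.\ vanishing boundary traces; one needs quantitative control on how $v$ approaches the boundary, which is where ${\mathcal{N}}_\kappa v\in L^p(\partial\Omega,\sigma)$ and the solvability of the $L^p$-Dirichlet problem in inf-flat AR domains (from \cite[Chapter~8]{GHA.V}) enter.

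First I would handle the decay at infinity in the exterior-domain case using \eqref{utrFC-XXX-2D.777.WACO.FDD}: there $v(x)=w(x)-u(x)=o(1)$ when $n\ge 3$ (modulo the harmless additive constant $c$ when $n=2$, which can be absorbed since $c$ must then be $\ge 0$ by looking at the trace, or handled by replacing $u$ with $u-c$), so the contribution from large spheres is negligible and the problem reduces to a maximum principle on a bounded region. Next, the core step: I would fix $\varepsilon>0$, use the cutoff functions $\Phi_\varepsilon$ from Lemma~\ref{LhkC} to localize away from $\partial\Omega$, and compare $v$ on the collar region ${\mathcal{O}}_\varepsilon$ against the harmonic extension of the boundary data $|v|^{{}^{\kappa-{\rm n.t.}}}_{\partial\Omega}=0$. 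Concretely, let $h_\varepsilon$ solve the $L^p$-Dirichlet problem in $\Omega$ with boundary datum ${\mathcal{N}}_\kappa^\varepsilon v$ (which lies in $L^p(\partial\Omega,\sigma)$ by \eqref{nkc-CC-PP} and Proposition~\ref{nont-ind-11-PP}); by the maximality property (7)-type estimate — this is precisely Proposition~\ref{yrFCC} being bootstrapped, so more honestly one argues directly — one dominates $v^{+}$ pointwise on $\Omega$ by the Poisson-type extension of its boundary maximal function. Using Proposition~\ref{kiGa-615} one controls the $L^p$ mass of this extension on collars ${\mathcal{O}}_\varepsilon$ by $C\varepsilon^{1/p}\|{\mathcal{N}}_\kappa^\varepsilon v\|_{L^p}$, which tends to $0$ as $\varepsilon\to 0^{+}$ since ${\mathcal{N}}_\kappa^\varepsilon v\to |v|^{{}^{\kappa-{\rm n.t.}}}_{\partial\Omega}|=0$ in $L^p(\partial\Omega,\sigma)$ by \eqref{nkc-CC-PP}. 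Passing to the limit forces the harmonic majorant to vanish, hence $v\le 0$, i.e.\ $w\le u$, throughout $\Omega$.

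The main obstacle is the middle step — producing the harmonic majorant of $v^{+}$ and controlling it near the boundary without circular reference to the conclusion. The honest route is: (i) solve the $L^p$-Dirichlet problem in the inf-flat AR domain $\Omega$ for every $p\in(1,\infty)$ — this is exactly the hard analytic input cited from \cite[Chapter~8]{GHA.V}, valid because $\Omega$ is a two-sided NTA, UR domain with VMO normal (Theorem~\ref{6trrf.TT.ccc.WACO.222.NEW.AR}); (ii) use the subharmonicity of $v$ together with the De Giorgi–Federer divergence theorem for singular vector fields, Theorem~\ref{BSA-V42-M+D}, applied to $\vec F=\nabla(\Phi_\varepsilon\,(v^{+}\ast\rho_\delta))$ or a similar mollified/cutoff field, to obtain a mean-value/Green's-identity inequality of the form $v(x)\le \int_{\partial\Omega} P(x,y)\,({\mathcal N}_\kappa^\varepsilon v)(y)\,d\sigma(y)$ with $P$ the harmonic measure kernel; (iii) combine with Proposition~\ref{kiGa-615}. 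Care is needed that the subharmonicity inequality survives the mollification and the integration by parts on a merely AR (not smooth) boundary — this is precisely why the singular divergence theorem of Theorem~\ref{BSA-V42-M+D}, with its nontangential-trace hypotheses matching \eqref{ih6g6gfr5}, is the right tool, and why the decay assumptions \eqref{utrFC-XXX-2D.777.WACO.FDD} are imposed in the exterior case to legitimize the integration by parts out to infinity.
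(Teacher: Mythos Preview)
Your outline captures the right ingredients --- mollification, the cutoff family $\{\Phi_\varepsilon\}$, Theorem~\ref{BSA-V42-M+D}, the $L^p$-Dirichlet solvability from \cite[Chapter~8]{GHA.V}, and Proposition~\ref{kiGa-615} --- but the central step is circular as written. You want to dominate $v^{+}$ pointwise in $\Omega$ by the harmonic extension of its boundary maximal function (or by a ``Poisson-type extension''), and you acknowledge this is ``precisely Proposition~\ref{yrFCC} being bootstrapped.'' It is: knowing that a subharmonic function with controlled nontangential maximal function lies below the harmonic function with matching trace is exactly the statement you are trying to prove. Solving the Dirichlet problem with datum ${\mathcal{N}}_\kappa^\varepsilon v$ gives you a harmonic $h_\varepsilon$, but you have no mechanism to compare $v$ to $h_\varepsilon$ in the interior without already possessing the comparison principle.

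The paper breaks this circle by working with the \emph{Green function} $G_\Omega(\cdot,x_0)$ at a fixed pole $x_0\in\Omega$ rather than with Poisson extensions. The construction uses the single layer potential and the invertibility of $S$ on $L^{p'}(\partial\Omega,\sigma)$ (this is where the inf-flat AR hypothesis and \cite{GHA.V} enter), and a separate argument via the Kelvin transform establishes $G_\Omega(\cdot,x_0)>0$ in $\Omega\setminus\{x_0\}$ even in the exterior case. One then applies Theorem~\ref{BSA-V42-M+D} not to $\nabla(\Phi_\varepsilon\,(v^{+}\!\ast\rho_\delta))$ but to the vector field
\[
\vec{F}=\Phi_\varepsilon w_j\,\nabla G_\Omega(\cdot,x_0)+w_j\,G_\Omega(\cdot,x_0)\,\nabla\Phi_\varepsilon-\Phi_\varepsilon\,G_\Omega(\cdot,x_0)\,\nabla w_j,
\]
whose divergence contains the term $-G_\Omega(\cdot,x_0)\Phi_\varepsilon\,\Delta w_j$. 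Since $G_\Omega>0$, $\Phi_\varepsilon\ge 0$, and $\Delta w_j\ge 0$, this term has a definite sign and can be discarded, yielding $w_j(x_0)\le(\text{boundary-layer integrals})$. The same identity for $u$ holds with equality; subtracting and sending $j\to\infty$, then $\varepsilon\to 0^{+}$ (using Proposition~\ref{kiGa-615} and \eqref{nkc-CC-PP} exactly as you suggest), gives $w(x_0)\le u(x_0)$. The Green function's positivity is the device that replaces the maximum principle you were trying to invoke; without it, your divergence-theorem computation has no way to produce a one-sided inequality.
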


\begin{proof}
The proof combines ideas from \cite[Lemma~2.2, p.\,12]{FaKe81} and \cite{GHA.I}-\cite{GHA.V}.
To get started, for each $j\in{\mathbb{N}}$ define $\Omega_j:=\{x\in\Omega:\,{\rm dist}(x,\partial\Omega)>1/j\}$. 
A standard argument involving a (nonnegative) mollifier yields a sequence $\{w_j\}_{j\in{\mathbb{N}}}$ 
such that for each $j\in{\mathbb{N}}$ the function $w_j$ belongs to ${\mathscr{C}}^{\infty}(\Omega_j)$, is subharmonic in $\Omega_j$, 
and $w_j\to w$ uniformly on compact subsets of $\Omega$ as $j\to\infty$. From the subaverage property and Taylor's formula we also conclude that
\begin{equation}\label{eq:166TYaf}
(\Delta w_j)(x)\geq 0\,\,\text{ for each $j\in{\mathbb{N}}$ and each $x\in\Omega_j$}.
\end{equation}

Next, having fixed an arbitrary point $x_0\in\Omega$,  
construct a Green function $G_{\Omega}(\cdot,x_0)$ for the Laplacian in $\Omega$ with pole at $x_0$ satisfying
\begin{equation}\label{Dir-Lap4}
\begin{array}{c}
G_{\Omega}(\cdot,x_0)\in{\mathscr{C}}^{\infty}(\Omega\setminus\{x_0\}),\quad
\Delta G_{\Omega}(\cdot,x_0)=-\delta_{x_0}\text{ in }{\mathcal{D}}'(\Omega),
\\[4pt]
G_{\Omega}(\cdot,x_0)\Big|^{{}^{\kappa-{\rm n.t.}}}_{\partial\Omega}=0,
\end{array}
\end{equation}
and
\begin{equation}\label{Dir-Lap5}
\begin{array}{c}
{\mathcal{N}}_{\kappa}^{\varepsilon}\big(\nabla G_{\Omega}(\cdot,x_0)\big),\,
{\mathcal{N}}_{\kappa}^{\varepsilon}\big(G_{\Omega}(\cdot,x_0)\big)\in L^{p'}(\partial\Omega,\sigma)
\\[4pt]
\text{whenever }\,
0<\varepsilon<\tfrac{1}{4}\,{\rm dist}\,(x_0,\partial\Omega),
\end{array}
\end{equation}
where $p'\in(1,\infty)$ is such that $1/p+1/p'=1$. To be specific, bring in the standard fundamental 
solution $E_\Delta$ for the Laplacian, defined at each $x\in{\mathbb{R}}^n\setminus\{0\}$ as 
\begin{equation}\label{j7ggGYg.EEE}
E_{\Delta}(x):=\left\{
\begin{array}{ll}
\displaystyle\frac{1}{\omega_{n-1}(2-n)}\frac{1}{|x|^{n-2}} & \text{ if }\,\,n\geq 3,
\\[18pt]
\displaystyle\frac{1}{2\pi}\ln\,|x| & \text{ if }\,\,n=2,
\end{array}
\right. 
\end{equation}
where $\omega_{n-1}$ stands for the area of the unit sphere $S^{n-1}$ in ${\mathbb{R}}^n$.
Denote by ${\mathscr{S}}$ the boundary-to-domain harmonic single layer operator on $\partial\Omega$, 
acting on each $f\in L^1(\partial\Omega,\sigma)$ by 
\begin{equation}\label{eq:SSha.1}
{\mathscr{S}}f(x):=\int_{\partial\Omega}E_{\Delta}(x-y)f(y)\,d\sigma(y)\,\,\text{ for each }\,\,x\in\Omega,
\end{equation}
and also consider its boundary-to-boundary version, i.e.,  
\begin{equation}\label{eq:SSha.2}
Sf(x):=\int_{\partial\Omega}E_{\Delta}(x-y)f(y)\,d\sigma(y)\,\,\text{ for }\,\,x\in\partial\Omega.
\end{equation}
In addition, fix an auxiliary point $x_\ast\in{\mathbb{R}}^n\setminus\overline{\Omega}$. Then, as noted in \cite[(4.3.14), p.\,794]{GHA.III}, 
if $n\geq 3$ we may take 
\begin{equation}\label{eq:SSha.3}
G_{\Omega}(\cdot,x_0):=-E_{\Delta}(\cdot-x_0)+E_{\Delta}(\cdot-x_\ast)-{\mathscr{S}}f\,\,\text{ in }\,\,\Omega,
\end{equation}
where $f\in L^{p'}(\partial\Omega,\sigma)$ is chosen so that 
\begin{equation}\label{eq:SSha.4}
Sf=-E_{\Delta}(\cdot-x_0)+E_{\Delta}(\cdot-x_\ast)\,\,\text{ on }\,\,\partial\Omega.
\end{equation}
The fact that such a function $f$ exists is a consequence of \cite[Theorem~7.1.9, pp.\,436--437]{GHA.V} (which makes essential 
use of the fact that $\Omega$ is an infinitesimally flat {\rm AR} domain). When $n=2$, in place of \eqref{eq:SSha.3}-\eqref{eq:SSha.4} we take
\begin{equation}\label{eq:SSha.5}
G_{\Omega}(\cdot,x_0):=-E_{\Delta}(\cdot-x_0)+E_{\Delta}(\cdot-x_\ast)-{\mathscr{S}}f+c\,\,\text{ in }\,\,\Omega,
\end{equation}
where $f\in L^{p'}(\partial\Omega,\sigma)$ and $c\in{\mathbb{R}}$ are chosen so that 
\begin{equation}\label{eq:SSha.6}
\int_{\partial\Omega}f\,d\sigma=0\,\,\text{ and }\,\,Sf-c=-E_{\Delta}(\cdot-x_0)+E_{\Delta}(\cdot-x_\ast)\,\,\text{ on }\,\,\partial\Omega.
\end{equation}
That such function $f$ and constant $c$ exist is guaranteed by \cite[Theorem~7.1.10, pp.\,439--440]{GHA.V} (which, again, 
makes critical use of the fact that $\Omega$ is an infinitesimally flat {\rm AR} domain). In all cases, 
thanks to the Calder\'on-Zygmund theory for the harmonic single layer operator in {\rm UR} domains (cf. Theorem~\ref{6trrf.TT.ccc.WACO.222.NEW.AR}
and \cite[Theorem~1.5.1, pp.\,86--102]{GHA.IV}) the Green function just constructed enjoys all properties listed in 
\eqref{Dir-Lap4}-\eqref{Dir-Lap5}. In addition, from \eqref{eq:SSha.3}-\eqref{eq:SSha.6}, \eqref{j7ggGYg.EEE}, 
the Mean Value Theorem, \eqref{eq:SSha.1}, and \cite[Theorem~1.5.9, pp.\,151--153]{GHA.III}
we see that in the case when $\Omega$ is an exterior domain this Green function exhibits the following behavior as $|x|\to\infty$:
\begin{equation}\label{utrFC-XXX-2D.777.WACO.m}
\begin{array}{c}
G_{\Omega}(x,x_0)=c+O(|x|^{-1})\,\,\text{ and }\,\,(\nabla_x G)(x,x_0)=O(|x|^{-2})\,\,\text{ if $n=2$},
\\[6pt]
G_{\Omega}(x,x_0)=O(|x|^{2-n})\,\,\text{ and }\,\,(\nabla_x G)(x,x_0)=O(|x|^{1-n})\,\,\text{ if $n\geq 3$}.
\end{array}
\end{equation}

From the uniqueness in the $L^{p'}$-Dirichlet Problem for the Laplacian in the infinitesimally flat {\rm AR} domain $\Omega$ 
(cf. \cite[Theorem~8.1.6, pp.\,594--596]{GHA.V}) we then see that there is only one function $G_{\Omega}(\cdot,x_0)$ as 
in \eqref{Dir-Lap4}-\eqref{Dir-Lap5} and \eqref{utrFC-XXX-2D.777.WACO.m}. In \cite[\S5]{GHA.III} a Green function 
has been constructed when $\Omega$ is bounded which satisfies all the properties listed in \eqref{Dir-Lap4} and \eqref{utrFC-XXX-2D.777.WACO.m}, 
as well as 
\begin{equation}\label{eq:UIfaf.A}
G_{\Omega}(\cdot,x_0)>0\,\,\text{ in }\,\,\Omega\setminus\{x_0\}.
\end{equation}
In light of the aforementioned uniqueness result we conclude that the Green function constructed in \eqref{eq:SSha.3}-\eqref{eq:SSha.6}
also enjoys the property in \eqref{eq:UIfaf.A} if $\Omega$ is bounded. We can extend this result to exterior domains using the Kelvin transform.
Specifically, assume now $\Omega$ is an exterior domain in ${\mathbb{R}}^n$. Via a translation, arrange matters so that $0\notin\Omega$. 
Also, fix an arbitrary pole $x_0\in\Omega$. Then 
\begin{equation}\label{eq:TYFG.A1}
\widetilde{\Omega}:=\Big\{\tfrac{x}{|x|^2}:\,x\in\Omega\Big\}\cup\{0\}
\end{equation}
is a bounded infinitesimally flat {\rm AR} domain with the property that 
\begin{equation}\label{eq:TYFG.A2}
\partial\widetilde{\Omega}:=\Big\{\tfrac{x}{|x|^2}:\,x\in\partial\Omega\Big\}\,\,\text{ and }\,\,\tfrac{x_0}{|x_0|^2}\in\widetilde{\Omega}.
\end{equation}
Define 
\begin{equation}\label{eq:TYFG.A3}
G(x):=|x_0|^{2-n}|x|^{2-n}G_{\widetilde{\Omega}}\Big(\tfrac{x}{|x|^2},\tfrac{x_0}{|x_0|^2}\Big)\,\,\text{ for each }\,\,x\in\Omega\setminus\{x_0\},
\end{equation}
where $G_{\widetilde{\Omega}}\Big(\cdot\,,\tfrac{x_0}{|x_0|^2}\Big)$ is the Green function for the Laplacian in the bounded domain $\widetilde{\Omega}$ 
with pole at $\tfrac{x_0}{|x_0|^2}\in\widetilde{\Omega}$, as constructed above. In view of \eqref{Dir-Lap4}-\eqref{Dir-Lap5}, this satisfies
\begin{align}\label{Dir-Lap4.F}
& G\in{\mathscr{C}}^{\infty}\big(\widetilde{\Omega}\setminus\{x_0/|x_0|^2\}\big),\quad
G\big|^{{}^{\kappa-{\rm n.t.}}}_{\partial\widetilde{\Omega}}=0,\,\,\text{ and}
\\[4pt]
& \widetilde{\mathcal{N}}_{\kappa}^{\varepsilon}G\,\,\text{ belongs to }\,\,
L^{p'}(\partial\widetilde{\Omega},\widetilde{\sigma})\,\,\text{ if $\varepsilon>0$ is small},
\label{Dir-Lap5.F}
\end{align}
where the objects decorated with tilde are considered in relation to $\widetilde{\Omega}$ much as their counterparts have been introduced for $\Omega$.
Moreover, since the Kelvin transform preserves harmonicity, we have $\Delta G=0$ in $\widetilde{\Omega}\setminus\{x_0/|x_0|^2\}$ and 
\eqref{utrFC-XXX-2D.777.WACO.m} implies that   
\begin{equation}\label{utrFC-XXX-2D.777.WACO.m.F}
\begin{array}{c}
\text{if $n=2$ then $\lim\limits_{x\to\infty}G(x)$ exists, hence one has}
\\[6pt]
G(x)=c+O(|x|^{-1})\,\,\text{ for some $c\in{\mathbb{R}}$ as $|x|\to\infty$, while}
\\[6pt]
\text{if $n\geq 3$ then }\,\,G(x)=O(|x|^{2-n})\,\,\text{ as }\,\,|x|\to\infty.
\end{array}
\end{equation}
Moreover, the function  
\begin{equation}\label{eq:LKHGagf6frcf.1}
h:=G_{\widetilde{\Omega}}\Big(\cdot\,,\tfrac{x_0}{|x_0|^2}\Big)+E_\Delta\Big(\cdot-\tfrac{x_0}{|x_0|^2}\Big)\,\,\text{ in }\,\,\widetilde{\Omega}
\end{equation}
is harmonic in $\widetilde{\Omega}$, hence 
\begin{equation}\label{eq:LKHGagf6frcf.1b}
\Omega\ni x\longmapsto|x|^{2-n}h\Big(\tfrac{x}{|x|^2}\Big)\in{\mathbb{R}}\,\,\text{ is a harmonic function in }\,\,\Omega.
\end{equation}
Collectively, \eqref{eq:TYFG.A3} and \eqref{eq:LKHGagf6frcf.1} imply that for each $x\in\Omega$ we have
\begin{equation}\label{eq:LKHGagf6frcf.2}
G(x)=-|x_0|^{2-n}|x|^{2-n}E_\Delta\Big(\tfrac{x}{|x|^2}-\tfrac{x_0}{|x_0|^2}\Big)+|x_0|^{2-n}\Big[|x|^{2-n}h\Big(\tfrac{x}{|x|^2}\Big)\Big].
\end{equation}
In relation to this, observe that since for each $x,y\in{\mathbb{R}}^n\setminus\{0\}$ we may compute 
\begin{align}\label{eq:jGFFaf.P1}
|x|^2|y|^2\Big|\tfrac{x}{|x|^2}-\tfrac{y}{|y|^2}\Big|^2
&=|x|^2|y|^2\Big\langle\tfrac{x}{|x|^2}-\tfrac{y}{|y|^2}\,,\,\tfrac{x}{|x|^2}-\tfrac{y}{|y|^2}\Big\rangle
\nonumber\\[4pt]
&=|x|^2|y|^2\Big\{\tfrac{1}{|x|^2}-\tfrac{2\langle x,y\rangle}{|x|^2|y|^2}+\frac{1}{|y|^2}\Big\}
\nonumber\\[4pt]
&=|x|^2-2\langle x,y\rangle+|y|^2=|x-y|^2,
\end{align}
we have $|x||y|\Big|\tfrac{x}{|x|^2}-\tfrac{y}{|y|^2}\Big|=|x-y|$, hence 
\begin{align}\label{eq:jGFFaf.P2}
&|x_0|^{2-n}|x|^{2-n}E_\Delta\Big(\tfrac{x}{|x|^2}-\tfrac{x_0}{|x_0|^2}\Big)
\\[4pt]
&\qquad=\left\{
\begin{array}{ll}
E_\Delta(x-x_0) & \text{ if }\,\,n\geq 3,
\\[18pt]
E_\Delta(x-x_0)-E_\Delta(x)-E_\Delta(x_0) & \text{ if }\,\,n=2,
\end{array}
\right. 
\nonumber
\end{align}
for each $x\in\Omega$. From \eqref{eq:LKHGagf6frcf.2}, \eqref{eq:jGFFaf.P2}, and \eqref{eq:LKHGagf6frcf.1b} we then see that 
\begin{equation}\label{eq:jGFFaf.P3}
\Delta G=-\delta_{x_0}\,\,\text{ in }\,\,{\mathcal{D}}'(\Omega).
\end{equation}
On account of \eqref{Dir-Lap4.F}, \eqref{Dir-Lap5.F}, \eqref{utrFC-XXX-2D.777.WACO.m.F}, \eqref{eq:jGFFaf.P3}, and 
the uniqueness in the $L^{p'}$-Dirichlet Problem for the Laplacian in the infinitesimally flat {\rm AR} domain $\Omega$ 
(cf. \cite[Theorem~8.1.6, pp.\,594--596]{GHA.V}) we then conclude that $G$ defined in \eqref{eq:TYFG.A3} is indeed the 
Green function for the Laplacian with pole at $x_0$ constructed as in \eqref{eq:SSha.3}-\eqref{eq:SSha.6} in relation to the current exterior domain $\Omega$.
Based on this and \eqref{eq:UIfaf.A} (written for $\widetilde{\Omega}$ in place of $\Omega$ and $x_0/|x_0|^2$ in place of $x_0$) we then deduce that
\begin{equation}\label{eq:igegT}
G_\Omega(x,x_0)=|x_0|^{2-n}|x|^{2-n}G_{\widetilde{\Omega}}\Big(\tfrac{x}{|x|^2},\tfrac{x_0}{|x_0|^2}\Big)>0
\,\,\text{ for each }\,\,x\in\Omega\setminus\{x_0\}.
\end{equation}
This establishes \eqref{eq:UIfaf.A} in the class of unbounded infinitesimally flat {\rm AR} domains.
Simply put, 
\begin{equation}\label{eq:JHGgws.7trF}
\parbox{11.00cm}{the positivity property \eqref{eq:UIfaf.A} is true for any infinitesimally flat {\rm AR} domain $\Omega$ 
(regardless of whether the domain in question is bounded or unbounded).}
\end{equation}

For further reference, let us also note here that from Theorem~\ref{6trrf.TT.ccc.WACO.222.NEW.AR} and \cite[Proposition~8.9.17, pp.\,816-817]{GHA.I}, 
or \cite[Lemma~5.4.4, p.\,855]{GHA.III}, we know that there exist a number $\lambda\in(1,\infty)$ and an aperture parameter 
$\widetilde\kappa>\kappa$ with the property that for each truncation parameter $\varepsilon\in\big(0,{\rm dist}\,(x_0,\partial\Omega)\big)$ one has
\begin{align}\label{Int-Es-G.1-yy.2ecb.CHI}
{\mathcal{N}}^\varepsilon_{\kappa}\big(G_{\Omega}(\cdot,x_0)\big)(z)\leq\varepsilon\cdot
{\mathcal{N}}^{\lambda\varepsilon}_{\widetilde\kappa}\big(\nabla G_{\Omega}(\cdot,x_0)\big)(z)\,\,\text{ for $\sigma$-a.e. }\,\,z\in\partial\Omega.
\end{align}

At this stage, the proof branches out, depending on whether $\Omega$ is bounded, or $\Omega$ is an exterior domain.
Let us first treat the case when $\Omega$ is bounded. Recall the family of functions 
$\{\Phi_{\varepsilon}\}_{\varepsilon>0}$ introduced in Lemma~\ref{LhkC}.
Fix some $0<\varepsilon<\tfrac{1}{4}\,{\rm dist}\,(x_0,\partial\Omega)$ and for each $j\in{\mathbb{N}}$ sufficiently large
(e.g., $j>N/\varepsilon$ will do, if $N$ is as in Lemma~\ref{LhkC}) consider the vector field 
\begin{align}\label{Dir-Lap8.aaTa.1}
\vec{F}(y) &:=\Phi_\varepsilon(y)w_j(y)\nabla_yG_{\Omega}(y,x_0)+w_j(y)G_{\Omega}(y,x_0)(\nabla\Phi_\varepsilon)(y)
\nonumber\\[4pt]
&\quad -\Phi_\varepsilon(y)G_{\Omega}(y,x_0)(\nabla w_j)(y),\qquad\forall\,y\in\Omega\setminus\{x_0\}.
\end{align}
Bearing in mind \eqref{Dir-Lap4}-\eqref{Dir-Lap5}, that $\Phi_\varepsilon\in{\mathscr{C}}^{\infty}(\Omega)$,
${\rm supp}\,\,\Phi_\varepsilon\subseteq\Omega\setminus{\mathcal{O}}_{\varepsilon/N}\subseteq\Omega_j$ and 
$w_j\in{\mathscr{C}}^{\infty}(\Omega_j)$, it follows that $\vec{F}$ is well defined.
Also, since $\Phi_\varepsilon\equiv 0$ near $\partial\Omega$ we have 
\begin{align}\label{Dir-Lap8.aaTa.2}
\begin{array}{c}
\vec{F}\in\big[L^1_{\,\rm loc}(\Omega,{\mathcal{L}}^n)\big]^n,\quad 
\vec{F}\big|^{{}^{\kappa-{\rm n.t.}}}_{\partial\Omega}=0
\,\,\text{ at every point in }\,\,\partial_{{}_{\rm nta}}\Omega,
\\[6pt]
\text{and if $\rho>0$ is small then }\,\,{\mathcal{N}}_{\kappa}^{\rho}\vec{F}=0
\,\,\text{ at every point in }\,\,\partial\Omega.
\end{array}
\end{align}
Also, with the divergence computed in the sense of distributions in the bounded open set $\Omega$, 
\begin{align}\label{Dir-Lap8.aaTa.3}
{\rm div}\,\vec{F} &=2\langle\nabla G_{\Omega}(\cdot,x_0),\nabla\Phi_\varepsilon\rangle w_j
+G_{\Omega}(\cdot,x_0)(\Delta\Phi_\varepsilon)w_j
\nonumber\\[4pt]
&\quad -G_{\Omega}(\cdot,x_0)\Phi_\varepsilon(\Delta w_j)-w_j(x_0)\delta_{x_0}
\in L^1(\Omega,{\mathcal{L}}^n)+{\mathscr{E}}'(\Omega).
\end{align}
Granted these, Theorem~\ref{BSA-V42-M+D} applies and the Divergence Formula \eqref{BDS-Bvx-M+D} 
for the vector field $\vec{F}$ from \eqref{Dir-Lap8.aaTa.1} presently permits us to conclude that
\begin{align}\label{Dir-Lap8}
w_j(x_0) &=2\int_{\Omega}\langle\nabla_yG_{\Omega}(y,x_0),(\nabla\Phi_\varepsilon)(y)\rangle w_j(y)\,dy
\\[4pt]
&\quad+\int_{\Omega}G_{\Omega}(y,x_0)(\Delta\Phi_\varepsilon)(y)w_j(y)\,dy
\nonumber\\[4pt]
&\quad-\int_{\Omega}G_{\Omega}(y,x_0)\Phi_\varepsilon(y)(\Delta w_j)(y)\,dy.
\nonumber
\end{align}
Thanks to \eqref{eq:166TYaf}, \eqref{eq:JHGgws.7trF}, and the fact that $\Phi_\varepsilon\geq 0$ (cf. Lemma~\ref{LhkC}), 
this implies
\begin{align}\label{Dir-Lap8.pLa}
w_j(x_0)&\leq 2\int_{\Omega}\langle\nabla_yG_{\Omega}(y,x_0),(\nabla\Phi_\varepsilon)(y)\rangle w_j(y)\,dy
\\[4pt]
&\quad+\int_{\Omega}G_{\Omega}(y,x_0)(\Delta\Phi_\varepsilon)(y)w_j(y)\,dy
\nonumber
\end{align}
which, after passing to limit as $j\to\infty$, yields 
\begin{align}\label{Dir-Lap8.pL}
w(x_0)&\leq 2\int_{\Omega}\langle\nabla_yG_{\Omega}(y,x_0),(\nabla\Phi_\varepsilon)(y)\rangle w(y)\,dy
\\[4pt]
&\quad+\int_{\Omega}G_{\Omega}(y,x_0)(\Delta\Phi_\varepsilon)(y)w(y)\,dy.
\nonumber
\end{align}
The same reasoning that has produced \eqref{Dir-Lap8} now used with $w_j$ replaced by $u$ also gives, 
upon recalling that $\Delta u=0$ in $\Omega$, that 
\begin{align}\label{Dir-Lap8.ww}
u(x_0)&=2\int_{\Omega}\langle\nabla_yG_{\Omega}(y,x_0),(\nabla\Phi_\varepsilon)(y)\rangle u(y)\,dy
\\[4pt]
&\quad+\int_{\Omega}G_{\Omega}(y,x_0)(\Delta\Phi_\varepsilon)(y)u(y)\,dy.
\nonumber
\end{align}
Introducing
\begin{align}\label{Dir-Lap8.ww.2}
\eta:=w-u\,\,\text{ in }\,\,\Omega
\end{align}
it follows that
\begin{equation}\label{ih6g6gfr5.BBB}
{\mathcal{N}}_{\kappa}\eta\in L^p(\partial\Omega,\sigma)\,\,\text{ and }\,\,
\eta\big|^{{}^{\kappa-{\rm n.t.}}}_{\partial\Omega}=0\,\,\text{ at $\sigma$-a.e. point on }\,\,\partial\Omega.
\end{equation} 
These properties are going to be relevant shortly. For now, combining \eqref{Dir-Lap8.pL} 
with \eqref{Dir-Lap8.ww} we arrive at
\begin{align}\label{Dir-Lap8.pLb}
\eta(x_0)\leq{\rm I}_\varepsilon+{\rm II}_\varepsilon\,\,\text{ for each small }\,\,\varepsilon>0
\end{align}
where, for each $\varepsilon>0$ small, we have set
\begin{align}\label{Dir-Lap8.pL.22}
{\rm I}_\varepsilon &:=2\int_{\Omega}\langle\nabla_yG_{\Omega}(y,x_0),(\nabla\Phi_\varepsilon)(y)\rangle\eta(y)\,dy,
\\[6pt]
{\rm II}_\varepsilon &:=\int_{\Omega}G_{\Omega}(y,x_0)(\Delta\Phi_\varepsilon)(y)\eta(y)\,dy.
\label{Dir-Lap8.pL.3}
\end{align}
Upon recalling that 
${\mathcal{O}}_{\varepsilon}:=\big\{y\in\Omega:\,{\rm dist}\,(y,\partial\Omega)<\varepsilon\big\}$, 
it follows that for each $\varepsilon>0$ small we may invoke \eqref{VXT-12}-\eqref{VXT-13}, 
Proposition~\ref{kiGa-615} (with $p:=1$), and H\"older's inequality to estimate
\begin{align}\label{Dir-Lap9}
|{\rm I}_\varepsilon| & \leq\frac{C}{\varepsilon}
\int_{{\mathcal{O}}_\varepsilon}|\nabla_yG_{\Omega}(y,x_0)||\eta(y)|\,dy
\leq C\int_{\partial\Omega}{\mathcal{N}}_{\kappa}^{\varepsilon}\big(\nabla G_{\Omega}(\cdot,x_0)\eta\big)\,d\sigma
\nonumber\\[4pt]
&\leq C\int_{\partial\Omega}{\mathcal{N}}_{\kappa}^{\varepsilon}\big(\nabla G_{\Omega}(\cdot,x_0)\big)
{\mathcal{N}}_{\kappa}^\varepsilon\eta\,d\sigma
\nonumber\\[4pt]
& \leq C\big\|{\mathcal{N}}_{\kappa}^{\varepsilon}\big(\nabla G_{\Omega}(\cdot,x_0)\big)\big\|_{L^{p'}(\partial\Omega,\sigma)}
\|{\mathcal{N}}^{\varepsilon}_{\kappa}\eta\|_{L^{p}(\partial\Omega,\sigma)},
\end{align}
where $C\in(0,\infty)$ is independent of $\varepsilon$. 
Since \eqref{Dir-Lap5} and \eqref{ih6g6gfr5.BBB} imply (bearing \eqref{nkc-CC-PP} in mind)
\begin{equation}\label{Dir-Lap10}
\begin{array}{c}
\big\|{\mathcal{N}}_{\kappa}^{\varepsilon}\big(\nabla G_{\Omega}(\cdot,x_0)\big)\big\|_{L^{p'}(\partial\Omega,\sigma)}=O(1)
\,\text{ as }\,\varepsilon\to 0^{+},
\\[4pt]
\|{\mathcal{N}}_{\kappa}^{\varepsilon}\eta\|_{L^{p}(\partial\Omega,\sigma)}=o(1)\,\text{ as }\,\varepsilon\to 0^{+}, 
\end{array}
\end{equation}
this proves that
\begin{equation}\label{Dir-Lap11}
\lim_{\varepsilon\to 0^{+}}{\rm I}_\varepsilon=0.
\end{equation}
Likewise, based on \eqref{VXT-12}-\eqref{VXT-13}, Proposition~\ref{kiGa-615} (with $p:=1$), \eqref{Int-Es-G.1-yy.2ecb.CHI}, 
H\"older's inequality, and \cite[Corollary~8.4.8, (8.4.96), p.\,712]{GHA.I} we see that for each $\varepsilon>0$ small we have
\begin{align}\label{Dir-Lap9.gFD}
|{\rm II}_\varepsilon| & \leq\frac{C}{\varepsilon^2}
\int_{{\mathcal{O}}_\varepsilon}|G_{\Omega}(y,x_0)||\eta(y)|\,dy
\leq\frac{C}{\varepsilon}\int_{\partial\Omega}{\mathcal{N}}_{\kappa}^{\varepsilon}\big(G_{\Omega}(\cdot,x_0)\eta\big)\,d\sigma
\nonumber\\[4pt]
&\leq\frac{C}{\varepsilon}\int_{\partial\Omega}{\mathcal{N}}_{\kappa}^{\varepsilon}\big(G_{\Omega}(\cdot,x_0)\big)
{\mathcal{N}}_{\kappa}^\varepsilon\eta\,d\sigma
\nonumber\\[4pt]
& \leq C\int_{\partial\Omega}{\mathcal{N}}^{\lambda\varepsilon}_{\widetilde\kappa}\big(\nabla G_{\Omega}(\cdot,x_0)\big)
{\mathcal{N}}_{\kappa}^\varepsilon\eta\,d\sigma
\nonumber\\[4pt]
& \leq C\big\|{\mathcal{N}}^{\lambda\varepsilon}_{\widetilde\kappa}\big(\nabla G_{\Omega}(\cdot,x_0)\big)\big\|_{L^{p'}(\partial\Omega,\sigma)}
\|{\mathcal{N}}^{\varepsilon}_{\kappa}\eta\|_{L^{p}(\partial\Omega,\sigma)}
\nonumber\\[4pt]
& \leq C\big\|{\mathcal{N}}_{\kappa}^{\lambda\varepsilon}\big(\nabla G_{\Omega}(\cdot,x_0)\big)\big\|_{L^{p'}(\partial\Omega,\sigma)}
\|{\mathcal{N}}^{\varepsilon}_{\kappa}\eta\|_{L^{p}(\partial\Omega,\sigma)},
\end{align}
where $\lambda\in(1,\infty)$ and $\widetilde{\kappa}>\kappa$ are as in \eqref{Int-Es-G.1-yy.2ecb.CHI}, 
and where the constant $C\in(0,\infty)$ is independent of $\varepsilon$. Much as before, this ultimately permits us to conclude that
\begin{equation}\label{Dir-Lap11.BBB}
\lim_{\varepsilon\to 0^{+}}{\rm II}_\varepsilon=0.
\end{equation}
Together, \eqref{Dir-Lap8.pLb}, \eqref{Dir-Lap11}, and \eqref{Dir-Lap11.BBB} imply $\eta(x_0)\leq 0$.
Hence $w(x_0)\leq u(x_0)$, and since $x_0\in\Omega$ has been arbitrarily chosen, the desired conclusion follows in the case when $\Omega$ is bounded.

Let us now treat the case when $\Omega$ is an exterior domain. In this scenario, pick a nonnegative function $\Psi\in{\mathscr{C}}^\infty_c({\mathbb{R}}^n)$ 
with $\Psi\equiv 1$ on $B(0,1)$ and $\Psi\equiv 0$ on ${\mathbb{R}}^n\setminus B(0,2)$. Also, for each $R>0$ define $\Psi_R(x):=\Psi(x/R)$ 
at each $x\in{\mathbb{R}}^n$. The idea is to run the same argument as above with $\Phi_\varepsilon$ replaced by the product 
$\Phi_\varepsilon\cdot\Psi_R$, with $\varepsilon>0$ small and $R>0$ large. This time, in place of \eqref{Dir-Lap8.pLb}-\eqref{Dir-Lap8.pL.3} we arrive at 
\begin{align}\label{Dir-Lap8.pLb.II}
\eta(x_0)\leq{\rm I}_{\varepsilon,R}+{\rm II}_{\varepsilon,R}\,\,\text{ for each $\varepsilon>0$ small and $R>0$ large},
\end{align}
where, for each $\varepsilon>0$ small and $R>0$ large, we have set
\begin{align}\label{Dir-Lap8.pL.22.II}
{\rm I}_{\varepsilon,R} &:=2\int_{\Omega}\big\langle\nabla_yG_{\Omega}(y,x_0),\nabla(\Phi_\varepsilon\cdot\Psi_R)(y)\big\rangle\eta(y)\,dy,
\\[6pt]
{\rm II}_{\varepsilon,R} &:=\int_{\Omega}G_{\Omega}(y,x_0)\Delta(\Phi_\varepsilon\cdot\Psi_R)(y)\eta(y)\,dy.
\label{Dir-Lap8.pL.3.II}
\end{align}
Note that 
\begin{equation}\label{eq:kYGaff.1}
\nabla(\Phi_\varepsilon\cdot\Psi_R)=\Psi_R\cdot\nabla\Phi_\varepsilon+\Phi_\varepsilon\cdot\nabla\Psi_R=\nabla\Phi_\varepsilon+\nabla\Psi_R
\end{equation}
and
\begin{equation}\label{eq:kYGaff.2}
\Delta(\Phi_\varepsilon\cdot\Psi_R)=\Psi_R\cdot\Delta\Phi_\varepsilon+\Phi_\varepsilon\cdot\Delta\Psi_R
+2\langle\nabla\Phi_\varepsilon,\nabla\Psi_R\rangle=\Delta\Phi_\varepsilon+\Delta\Psi_R
\end{equation}
with the final equalities valid whenever $\varepsilon>0$ is small and $R>0$ is large. 
Our goal remains to show that $\eta(x_0)\leq 0$. The contributions of $\nabla\Phi_\varepsilon$ 
and $\Delta\Phi_\varepsilon$ in the context of ${\rm I}_{\varepsilon,R}$ and ${\rm II}_{\varepsilon,R}$
are handled as before. As for the contributions of $\nabla\Psi_R$ and $\Delta\Psi_R$ 
in the context of ${\rm I}_{\varepsilon,R}$ and ${\rm II}_{\varepsilon,R}$, in the case when $n\geq 3$ we have
\begin{align}\label{Dir-Lap8.pL.22.III}
\Big|\int_{\Omega}\big\langle &\nabla_yG_{\Omega}(y,x_0), (\nabla\Psi_R)(y)\big\rangle\eta(y)\,dy\Big|
\nonumber\\[4pt]
&\leq R^{-1}\int_{B(0,2R)\setminus B(0,R)}|\nabla_yG_{\Omega}(y,x_0)||\eta(y)|\,dy
\nonumber\\[4pt]
&\leq CR^{-n}\int_{B(0,2R)\setminus B(0,R)}|\eta(y)|\,dy
\nonumber\\[4pt]
&\leq C\fint_{B(0,2R)\setminus B(0,R)}|u|\,d{\mathcal{L}}^n+C\fint_{B(0,2R)\setminus B(0,R)}|w|\,d{\mathcal{L}}^n
\nonumber\\[4pt]
&=o(1)\,\,\text{ as }\,\,R\to\infty,
\end{align}
by virtue of the additional assumptions made in \eqref{utrFC-XXX-2D.777.WACO.FDD}, and similarly
\begin{align}\label{Dir-Lap8.pL.22.IV}
&\Big|\int_{\Omega}G_{\Omega}(y,x_0)(\Delta\Psi_R)(y)\eta(y)\,dy\Big|
\nonumber\\[4pt]
&\quad\leq R^{-2}\int_{B(0,2R)\setminus B(0,R)}|G_{\Omega}(y,x_0)||\eta(y)|\,dy
\nonumber\\[4pt]
&\quad\leq CR^{-n}\int_{B(0,2R)\setminus B(0,R)}|\eta(y)|\,dy
\nonumber\\[4pt]
&\quad\leq C\fint_{B(0,2R)\setminus B(0,R)}|u|\,d{\mathcal{L}}^n+C\fint_{B(0,2R)\setminus B(0,R)}|w|\,d{\mathcal{L}}^n
\nonumber\\[4pt]
&\quad=o(1)\,\,\text{ as }\,\,R\to\infty.
\end{align}
In an analogous fashion, when $n=2$ we have
\begin{align}\label{Dir-Lap8.pL.22.V}
&\Big|\int_{\Omega}\big\langle\nabla_yG_{\Omega}(y,x_0),(\nabla\Psi_R)(y)\big\rangle\eta(y)\,dy\Big|
\nonumber\\[4pt]
&\quad\leq R^{-1}\int_{B(0,2R)\setminus B(0,R)}|\nabla_yG_{\Omega}(y,x_0)||\eta(y)|\,dy
\nonumber\\[4pt]
&\quad\leq CR^{-3}\int_{B(0,2R)\setminus B(0,R)}|\eta(y)|\,dy
\nonumber\\[4pt]
&\quad\leq CR^{-1}\fint_{B(0,2R)\setminus B(0,R)}|u|\,d{\mathcal{L}}^2
+CR^{-1}\fint_{B(0,2R)\setminus B(0,R)}|w|\,d{\mathcal{L}}^2
\nonumber\\[4pt]
&\quad=o(1)\,\,\text{ as }\,\,R\to\infty,
\end{align}
and
\begin{align}\label{Dir-Lap8.pL.22.VI}
&\Big|\int_{\Omega}G_{\Omega}(y,x_0)(\Delta\Psi_R)(y)\eta(y)\,dy\Big|
\nonumber\\[4pt]
&\quad\leq\Big|\int_{B(0,2R)\setminus B(0,R)}[G_{\Omega}(y,x_0)-c](\Delta\Psi_R)(y)\eta(y)\,dy\Big|
\nonumber\\[4pt]
&\qquad+|c|\Big|\int_{B(0,2R)\setminus B(0,R)}(\Delta\Psi_R)(y)\eta(y)\,dy\Big|
\nonumber\\[4pt]
&\quad\leq CR^{-3}\int_{B(0,2R)\setminus B(0,R)}|u|\,d{\mathcal{L}}^2
+CR^{-3}\int_{B(0,2R)\setminus B(0,R)}|w|\,d{\mathcal{L}}^2
\nonumber\\[4pt]
&\qquad+C\Big|\int_{B(0,2R)\setminus B(0,R)}(\Delta\Psi_R)u\,d{\mathcal{L}}^2\Big|
+CR^{-2}\int_{B(0,2R)\setminus B(0,R)}|w|\,d{\mathcal{L}}^2
\nonumber\\[4pt]
&\quad\leq CR^{-1}\fint_{B(0,2R)\setminus B(0,R)}|u|\,d{\mathcal{L}}^2
+C\fint_{B(0,2R)\setminus B(0,R)}|w|\,d{\mathcal{L}}^2
\nonumber\\[4pt]
&\qquad+C\Big|\int_{B(0,2R)\setminus B(0,R)}(\Delta\Psi_R)[u-c]\,d{\mathcal{L}}^2\Big|
\nonumber\\[4pt]
&\quad\leq CR^{-1}\fint_{B(0,2R)\setminus B(0,R)}|u|\,d{\mathcal{L}}^2
+C\fint_{B(0,2R)\setminus B(0,R)}|w|\,d{\mathcal{L}}^2+CR^{-1}
\nonumber\\[4pt]
&\quad=o(1)\,\,\text{ as }\,\,R\to\infty.
\end{align}
Above, the penultimate inequality uses the fact that since $\Psi_R\in{\mathscr{C}}^\infty_c({\mathbb{R}}^n)$ is supported in 
$B(0,2R)\setminus B(0,R)$ then
\begin{equation}\label{eq:yudgg.667}
\int_{B(0,2R)\setminus B(0,R)}\Delta\Psi_R\,d{\mathcal{L}}^2=\int_{{\mathbb{R}}^n}\Delta\Psi_R\,d{\mathcal{L}}^2=0,
\end{equation}
via integration by parts. All together, 
\begin{align}\label{Dir-Lap8.pLb.IIBM}
\lim_{\varepsilon\to 0^{+}}\lim_{R\to\infty}{\rm I}_{\varepsilon,R}=0\,\,\text{ and }\,\,
\lim_{\varepsilon\to 0^{+}}\lim_{R\to\infty}{\rm II}_{\varepsilon,R}=0
\end{align}
which in concert with \eqref{Dir-Lap8.pLb.II} proves that $w(x_0)-u(x_0)=\eta(x_0)\leq 0$, hence in this case we again have 
\begin{equation}\label{eq:WACO.2025}
w(x_0)\leq u(x_0), 
\end{equation}
as wanted.
\end{proof}

Before stating our next result, we recall some conventions and notation. First, 
given an open set $\Omega\subset{\mathbb{R}}^{n}$ of locally finite perimeter with 
geometric measure theoretic outward unit normal $\nu=(\nu_1,\dots,\nu_n)$,
under the canonical embedding of ${\mathbb{R}}^{n}$ into the Clifford algebra 
${\mathcal{C}}\!\ell_{n}$ (cf. \eqref{embed}), it follows that $\nu$ may be identified with 
the ${\mathcal{C}}\!\ell_{n}$-valued function defined ${\mathcal{H}}^{\,n-1}$-a.e. on 
$\partial_\ast\Omega$ according to 
\begin{equation}\label{utggGYHNN.iii}
\nu=\nu_1{\mathbf{e}}_1+\cdots+\nu_n{\mathbf{e}}_n.
\end{equation}
Second, recall the standard Dirac operator $D=\sum_{j=1}^n{\mathbf{e}}_j\odot\partial_j$ in ${\mathbb{R}}^n$.

\begin{proposition}\label{yrFCC-TT}
Let $\Omega$ be an infinitesimally flat {\rm AR} domain in $\mathbb{R}^{n}$ and set 
$\sigma:={\mathcal{H}}^{n-1}\lfloor\partial\Omega$. Also, denote by $\nu$ the geometric measure 
theoretic outward unit normal to $\Omega$, canonically identified with a Clifford algebra-valued 
function on $\partial\Omega$ as in \eqref{utggGYHNN.iii}, and fix some aperture parameter $\kappa>0$.
In this context, suppose the function $u\in{\mathscr{C}}^{\,\infty}(\Omega)\otimes{\mathcal{C}}\!\ell_n$ 
satisfies
\begin{equation}\label{ih6gAAA}
\begin{array}{c}
Du=0\,\,\text{ in }\,\,\Omega,\quad{\mathcal{N}}_{\kappa}u\in L^{1,\infty}(\partial\Omega,\sigma),\,\,\text{ and}
\\[4pt]
\text{$u(x)=o(1)$ as $|x|\to\infty$ if $\Omega$ is an exterior domain}.
\end{array}
\end{equation} 

Then the nontangential boundary trace $u\big|^{{}^{\kappa-{\rm n.t.}}}_{\partial\Omega}$ exists 
{\rm (}in ${\mathcal{C}}\!\ell_n${\rm )} at $\sigma$-a.e. point on $\partial\Omega$ and 
\begin{align}\label{ih6g6FFF}
{\mathcal{N}}_{\kappa}u\in L^{1}(\partial\Omega,\sigma)&\Longleftrightarrow
u\big|^{{}^{\kappa-{\rm n.t.}}}_{\partial\Omega}\in L^{1}(\partial\Omega,\sigma)\otimes{\mathcal{C}}\!\ell_n
\nonumber\\[6pt]
&\Longleftrightarrow\nu\odot\big(u\big|^{{}^{\kappa-{\rm n.t.}}}_{\partial\Omega}\big)\in 
H^{1}(\partial\Omega,\sigma)\otimes{\mathcal{C}}\!\ell_n
\nonumber\\[6pt]
&\Longleftrightarrow\nu\odot\big(u\big|^{{}^{\kappa-{\rm n.t.}}}_{\partial\Omega}\big)\in 
L^{1}(\partial\Omega,\sigma)\otimes{\mathcal{C}}\!\ell_n.
\end{align} 
Moreover, if either of the above memberships materializes then
\begin{align}\label{ih6g6GGG}
&\|{\mathcal{N}}_{\kappa}u\|_{L^{1}(\partial\Omega,\sigma)}\approx
\big\|u\big|^{{}^{\kappa-{\rm n.t.}}}_{\partial\Omega}\big\|_{L^{1}(\partial\Omega,\sigma)\otimes{\mathcal{C}}\!\ell_n}
\\[6pt]
&\qquad\approx\big\|\nu\odot\big(u\big|^{{}^{\kappa-{\rm n.t.}}}_{\partial\Omega}\big)
\big\|_{H^{1}(\partial\Omega,\sigma)\otimes{\mathcal{C}}\!\ell_n}
\approx\big\|\nu\odot\big(u\big|^{{}^{\kappa-{\rm n.t.}}}_{\partial\Omega}\big)
\big\|_{L^{1}(\partial\Omega,\sigma)\otimes{\mathcal{C}}\!\ell_n}
\nonumber
\end{align} 
where the implicit proportionality constants are independent of $u$.
\end{proposition}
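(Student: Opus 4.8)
The plan is to arrange the four conditions into a short cycle in which most implications are soft, concentrating the real work in two places: a subharmonic comparison that upgrades $L^{1}$-membership of the boundary trace of $u$ to $L^{1}$-membership of ${\mathcal{N}}_{\kappa}u$, and the Cauchy--Clifford reproducing formula, which links both of these to the Hardy space condition. To begin, $\partial\Omega$ is compact (infinitesimally flat {\rm AR} domains have compact boundary), so $\sigma$ is a finite doubling measure and $L^{1,\infty}(\partial\Omega,\sigma)\subseteq L^{q}(\partial\Omega,\sigma)$ for all $q\in(0,1)$; fixing $q_{0}\in\big(\tfrac{n-1}{n},1\big)$ and recalling that the Dirac operator $D$ is injectively elliptic, Theorem~\ref{Main-T1-AAFF.HAR.2-FAT.x2} applied to $u$ (which is monogenic with ${\mathcal{N}}_{\kappa}u\in L^{q_{0}}(\partial\Omega,\sigma)$) shows that $f:=u\big|^{{}^{\kappa-{\rm n.t.}}}_{\partial\Omega}$ exists $\sigma$-a.e.\ in ${\mathcal{C}}\!\ell_{n}$, is $\sigma$-measurable, and is independent of $\kappa$. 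Set $g:=\nu\odot f$. Three implications among the stated conditions are then soft: $({\mathcal{N}}_{\kappa}u\in L^{1})\Rightarrow(f\in L^{1})$ by the pointwise bound \eqref{abxi-gEE.u6f}; the identity $\nu\odot\nu=-|\nu|^{2}=-1$ in ${\mathcal{C}}\!\ell_{n}$ makes Clifford multiplication by $\nu$ bounded and pointwise invertible (inverse $-\nu\,\odot$), so $|f|\approx|g|$ $\sigma$-a.e.\ and hence $(f\in L^{1})\Leftrightarrow(g\in L^{1})$ with comparable norms; and $(g\in H^{1})\Rightarrow(g\in L^{1})$ is the inclusion $H^{1}\subseteq L^{1}$. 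It thus remains to establish $(f\in L^{1})\Rightarrow({\mathcal{N}}_{\kappa}u\in L^{1})$ and $({\mathcal{N}}_{\kappa}u\in L^{1})\Leftrightarrow(g\in H^{1})$, with quantitative control yielding \eqref{ih6g6GGG}.

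For the first implication I would invoke the purely algebraic/PDE fact (ingredient~(4) of the introduction) that subunitary powers of the modulus of a null-solution of $D$ are subharmonic: choose $q\in\big(\tfrac{n-1}{n},1\big)$ so that $w:=|u|^{q}$ is continuous and subharmonic on $\Omega$, and put $p:=1/q\in(1,\infty)$. Then $w\big|^{{}^{\kappa-{\rm n.t.}}}_{\partial\Omega}=|f|^{q}$ $\sigma$-a.e., and $|f|^{q}\in L^{p}(\partial\Omega,\sigma)$ since $\int_{\partial\Omega}|f|^{qp}\,d\sigma=\|f\|_{L^{1}(\partial\Omega,\sigma)}<\infty$. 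Solving the $L^{p}$-Dirichlet problem for $\Delta$ in $\Omega$ with datum $|f|^{q}$ — possible for every $p\in(1,\infty)$ by \cite[Chapter~8]{GHA.V}, with the decay prescribed in \eqref{utrFC-XXX-2D.777.WACO.FDD} in the exterior case — produces a harmonic $h$ on $\Omega$ with ${\mathcal{N}}_{\kappa}h\in L^{p}(\partial\Omega,\sigma)$, $\|{\mathcal{N}}_{\kappa}h\|_{L^{p}}\lesssim\|f\|_{L^{1}}^{q}$, and $h\big|^{{}^{\kappa-{\rm n.t.}}}_{\partial\Omega}=|f|^{q}$. Proposition~\ref{yrFCC}, applied with its harmonic function equal to $h$ and its subharmonic function equal to $w=|u|^{q}$, then forces $|u|^{q}\le h$ on $\Omega$; passing to nontangential maximal functions gives $({\mathcal{N}}_{\kappa}u)^{q}={\mathcal{N}}_{\kappa}(|u|^{q})\le{\mathcal{N}}_{\kappa}h$, whence ${\mathcal{N}}_{\kappa}u\in L^{1}(\partial\Omega,\sigma)$ with $\|{\mathcal{N}}_{\kappa}u\|_{L^{1}}=\|({\mathcal{N}}_{\kappa}u)^{q}\|_{L^{p}}^{1/q}\le\|{\mathcal{N}}_{\kappa}h\|_{L^{p}}^{1/q}\lesssim\|f\|_{L^{1}}$.

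For the equivalence $({\mathcal{N}}_{\kappa}u\in L^{1})\Leftrightarrow(g\in H^{1})$, the key is that a monogenic $u$ obeying \eqref{ih6gAAA} is reproduced by the Cauchy--Clifford integral of its trace, so $u(x)=\frac{1}{\omega_{n-1}}\int_{\partial\Omega}\frac{x-y}{|x-y|^{n}}\odot\nu(y)\odot f(y)\,d\sigma(y)$ for $x\in\Omega$; that is, $u$ is the Clifford ``Riesz (conjugate-system) extension'' of $g=\nu\odot f$. One direction is then standard Calder\'on--Zygmund/Hardy-space mapping theory on {\rm UR} sets: the Cauchy--Clifford operator sends $H^{1}(\partial\Omega,\sigma)$-data to functions with $L^{1}$ nontangential maximal function (cf.\ \cite[(2.3.27), p.\,351]{GHA.III}), giving $(g\in H^{1})\Rightarrow({\mathcal{N}}_{\kappa}u\in L^{1})$ with the right bound. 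For the converse, when ${\mathcal{N}}_{\kappa}u\in L^{1}$ the trace $f$ satisfies the boundary Cauchy-projection identity, and the resulting conjugate-system structure — powered once more by the subharmonicity of $|u|^{q}$ together with the $L^{p}$-Dirichlet solvability, exactly as in the classical passage from conjugate harmonics to $H^{1}$ — forces $g=\nu\odot f\in H^{1}(\partial\Omega,\sigma)$ with $\|g\|_{H^{1}}\lesssim\|{\mathcal{N}}_{\kappa}u\|_{L^{1}}$. Assembling the implications and tracking the implicit constants yields \eqref{ih6g6FFF} and \eqref{ih6g6GGG}.

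The step I expect to be the main obstacle is the application of Proposition~\ref{yrFCC}: it requires ${\mathcal{N}}_{\kappa}(|u|^{q})\in L^{p}(\partial\Omega,\sigma)$, whereas the hypotheses supply only ${\mathcal{N}}_{\kappa}u\in L^{1,\infty}(\partial\Omega,\sigma)$, hence a priori merely ${\mathcal{N}}_{\kappa}(|u|^{q})\in L^{p,\infty}(\partial\Omega,\sigma)$. Closing this gap — e.g.\ by first running the comparison on the exhausting subdomains $\Omega_{j}=\{x\in\Omega:\,{\rm dist}(x,\partial\Omega)>1/j\}$, where every quantity involved is bounded, and then passing to the limit while exploiting that $f\in L^{1}$ is also in force, or by an equivalent approximation of $w$ by subharmonic functions with $L^{p}$ maximal functions — and carrying the Clifford-algebra-valued bookkeeping through the Hardy-space identifications in this curved, geometrically rough setting, is where the delicate work lies; the remaining ingredients are either soft or direct appeals to the quoted Fatou, Dirichlet-solvability, and Calder\'on--Zygmund results.
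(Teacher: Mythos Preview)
Your overall architecture matches the paper's: existence of the trace via the Fatou result for injectively elliptic first-order systems, the soft implications you list, and the hard step $(f\in L^{1})\Rightarrow({\mathcal{N}}_{\kappa}u\in L^{1})$ via subharmonicity of $|u|^{\theta}$ and the comparison Proposition~\ref{yrFCC}. Two points deserve comment.

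First, the ``main obstacle'' you identify dissolves with one line, and this is exactly what the paper does. You set $p=1/q$, which is what creates the mismatch $L^{p,\infty}$ versus $L^{p}$. Instead, fix the subharmonicity exponent $\theta\in(0,1)$ (any $\theta\ge\tfrac{n-2}{n-1}$ works) and then choose $p\in(1,1/\theta)$ \emph{strictly}. Since $\partial\Omega$ is compact, $\sigma(\partial\Omega)<\infty$ and hence $L^{1/\theta,\infty}(\partial\Omega,\sigma)\hookrightarrow L^{p}(\partial\Omega,\sigma)$; this immediately upgrades ${\mathcal{N}}_{\kappa}(|u|^{\theta})\in L^{1/\theta,\infty}$ to ${\mathcal{N}}_{\kappa}(|u|^{\theta})\in L^{p}$. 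One solves the Dirichlet problem with datum $|f|^{\theta}$ in $L^{1/\theta}$ (not in $L^{p}$), so the harmonic majorant $v$ has ${\mathcal{N}}_{\kappa}v\in L^{1/\theta}\hookrightarrow L^{p}$ as well, and Proposition~\ref{yrFCC} applies directly with exponent $p$. No exhaustion or approximation is needed.

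Second, for the implication $({\mathcal{N}}_{\kappa}u\in L^{1})\Rightarrow(\nu\odot f\in H^{1})$, the paper does \emph{not} pass through the Cauchy--Clifford reproducing formula $u={\mathcal{C}}f$. Your route would require first justifying that identity under the present hypotheses (a nontrivial Cauchy integral formula), and your ``conjugate-system structure'' argument for the converse is left vague. The paper instead invokes a ready-made result (from \cite[(10.1.103)--(10.1.104)]{GHA.II}): if $F$ is monogenic in a {\rm UR} domain with ${\mathcal{N}}_{\kappa}F\in L^{1}(\partial\Omega,\sigma)$ and $F\big|^{{}^{\kappa-{\rm n.t.}}}_{\partial\Omega}$ exists $\sigma$-a.e., then $\nu\odot\big(F\big|^{{}^{\kappa-{\rm n.t.}}}_{\partial\Omega}\big)\in H^{1}(\partial\Omega,\sigma)\otimes{\mathcal{C}}\!\ell_{n}$ with the quantitative bound. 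Combined with $H^{1}\hookrightarrow L^{1}$ and $\nu\odot\nu=-1$, this closes the cycle with no further work. Your reproducing-formula approach could presumably be made to work, but it is a detour.
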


\begin{proof}
The fact that the nontangential boundary trace $u\big|^{{}^{\kappa-{\rm n.t.}}}_{\partial\Omega}$ 
exists (in ${\mathcal{C}}\!\ell_n$) at $\sigma$-a.e. point in $\partial\Omega$ follows 
from the Fatou-type result for monogenic functions recorded in \eqref{iah-a8-DDir}, since 
having ${\mathcal{H}}^{\,n-1}(\partial\Omega)<\infty$ permits us to invoke \cite[Lemma~6.2.4, (6.2.36), p.\,504]{GHA.I} to 
conclude from \eqref{ih6gAAA} that ${\mathcal{N}}_{\kappa}u$ belongs to $L^{p}(\partial\Omega,\sigma)$ 
whenever $p\in\big(\tfrac{n-1}{n}\,,\,1\big)$. 

Moving on, the right-pointing implication in the first line of \eqref{ih6g6FFF} and the 
right-pointing estimate in the first line of \eqref{ih6g6GGG} are clear from \eqref{abxi-gEE.u6f} 
and Proposition~\ref{nont-ind-11-PP}. Consider next the left-pointing implication in the 
first line of \eqref{ih6g6FFF} and the naturally accompanying estimate. To this end, assume 
$u\in{\mathscr{C}}^{\,\infty}(\Omega)\otimes{\mathcal{C}}\!\ell_n$ is as in \eqref{ih6gAAA} 
and, in addition,  
\begin{equation}\label{ih6g6EEE}
u\big|^{{}^{\kappa-{\rm n.t.}}}_{\partial\Omega}\in L^{1}(\partial\Omega,\sigma)\otimes{\mathcal{C}}\!\ell_n.
\end{equation} 
The fact that $u$ is monogenic (i.e., a null-solution of $D$) in $\Omega$ implies the existence of some 
purely dimensional exponent $\theta\in(0,1)$ with the property that the function $w:=|u|^\theta$ is subharmonic 
in $\Omega$ (e.g., any $\theta\in(0,1)$ satisfying $\theta\geq\tfrac{n-2}{n-1}$ will do; see \cite{SW2}, 
or \cite[Theorem~3.35, p.\,106]{GiMu91}). In addition, $w$ is continuous and nonnegative in $\Omega$, and 
\begin{equation}\label{ih6g6Fii}
{\mathcal{N}}_{\kappa}w=({\mathcal{N}}_{\kappa}u)^\theta\in L^{1/\theta,\infty}(\partial\Omega,\sigma).
\end{equation} 
Also, $w(x)=o(1)$ as $|x|\to\infty$ if $\Omega$ is an exterior domain.
To proceed, choose $p\in(1,1/\theta)$ and observe that, since $\partial\Omega$ is compact, we have
\begin{equation}\label{ih6g6Fii.b}
L^{1/\theta,\infty}(\partial\Omega,\sigma)\hookrightarrow L^{p}(\partial\Omega,\sigma).
\end{equation} 
In particular, 
\begin{equation}\label{ih6g6Fii.c}
{\mathcal{N}}_{\kappa}w\in L^{p}(\partial\Omega,\sigma).
\end{equation} 
Let us now introduce
\begin{equation}\label{ih6g6E22}
f:=w\big|^{{}^{\kappa-{\rm n.t.}}}_{\partial\Omega}
=\Big|u\big|^{{}^{\kappa-{\rm n.t.}}}_{\partial\Omega}\Big|^\theta\in L^{1/\theta}(\partial\Omega,\sigma).
\end{equation} 
Since $\Omega$ is an infinitesimally flat {\rm AR} domain and $1/\theta\in(1,\infty)$, applying \cite[Theorem~8.1.6, pp.\,594-596]{GHA.V}
with $L:=\Delta$ ensures the solvability of the Dirichlet Problem
\begin{equation}\label{Dir-Lap1aaf}
\left\{
\begin{array}{l}
v\in{\mathscr{C}}^{\infty}(\Omega),\,\,\Delta v=0\,\,\text{ in }\,\,\Omega,
\quad{\mathcal{N}}_{\kappa}v\in L^{1/\theta}(\partial\Omega,\sigma),
\\[6pt]
v\big|^{{}^{\kappa-{\rm n.t.}}}_{\partial\Omega}=f\,\,\text{ at $\sigma$-a.e. point on }\,\partial\Omega,
\\[6pt]
v(x)=O(|x|^{2-n})\,\,\text{ as $|x|\to\infty$ if $\Omega$ is an exterior domain}.
\end{array}
\right.
\end{equation}
Moreover, there exists a constant $C\in(0,\infty)$ so that 
\begin{equation}\label{Dir-Lap1aaf.EEE}
\|{\mathcal{N}}_{\kappa}v\|_{L^{1/\theta}(\partial\Omega,\sigma)}\leq\|f\|_{L^{1/\theta}(\partial\Omega,\sigma)}.
\end{equation}
Given that $\partial\Omega$ has finite measure, we also have 
\begin{equation}\label{ih6g6Fii.d}
{\mathcal{N}}_{\kappa}v\in L^{p}(\partial\Omega,\sigma).
\end{equation} 
Finally, by virtue of the last property in \cite[Theorem~1.5.9, pp.\,151--153]{GHA.III}, in the case when $n=2$
the last condition in \eqref{Dir-Lap1aaf} is equivalent to the fact that
\begin{equation}\label{eq:fSWSSf.43}
\parbox{8.00cm}{if $\Omega$ is an exterior domain in ${\mathbb{R}}^2$ then there exists $c\in{\mathbb{R}}$ such that 
$v(x)=c+O(|x|^{-1})$ as $|x|\to\infty$.} 
\end{equation}

At this stage, since the subharmonic function $w$ and the harmonic function $v$ have matching nontangential boundary traces, 
behave at infinity in a compatible fashion with \eqref{utrFC-XXX-2D.777.WACO.FDD} if $\Omega$ is an exterior domain,  
and since ${\mathcal{N}}_{\kappa}w,\,{\mathcal{N}}_{\kappa}v\in L^{p}(\partial\Omega,\sigma)$ with $p\in(1,\infty)$, 
Proposition~\ref{yrFCC} applies (with the role of $u$ currently played by $v$) and gives that $0\leq w\leq v$ in $\Omega$. 
Consequently,
\begin{equation}\label{ih6g6FiKH}
0\leq{\mathcal{N}}_{\kappa}w\leq{\mathcal{N}}_{\kappa}v\,\,\text{ at every point on }\,\,\partial\Omega.
\end{equation} 
In concert with the estimate in \eqref{Dir-Lap1aaf.EEE}, naturally accompanying \eqref{Dir-Lap1aaf},  
and the knowledge that the function ${\mathcal{N}}_{\kappa}u$ is nonnegative and $\sigma$-measurable on 
$\partial\Omega$ (cf. \cite[Proposition~8.2.3, pp.\,684--685]{GHA.I}), this permits us to estimate 
\begin{align}\label{ih6g6FiKH.22}
\int_{\partial\Omega}{\mathcal{N}}_{\kappa}u\,d\sigma 
&=\int_{\partial\Omega}\big[{\mathcal{N}}_{\kappa}\big(|u|^\theta\big)\big]^{1/\theta}\,d\sigma
=\int_{\partial\Omega}\big[{\mathcal{N}}_{\kappa}w\big]^{1/\theta}\,d\sigma
\nonumber\\[4pt]
&\leq\int_{\partial\Omega}\big[{\mathcal{N}}_{\kappa}v\big]^{1/\theta}\,d\sigma
\leq C\int_{\partial\Omega}|f|^{1/\theta}\,d\sigma
\nonumber\\[4pt]
&=C\int_{\partial\Omega}\big|u\big|^{{}^{\kappa-{\rm n.t.}}}_{\partial\Omega}\big|\,d\sigma<+\infty,
\end{align} 
for some finite constant $C>0$ independent of the function $u$. 
This finishes the proof of the left-pointing implication in the first line 
of \eqref{ih6g6FFF}, in a quantitative sense. 

Let us record our progress. At this point, the equivalence in the first line of \eqref{ih6g6FFF} and 
the first norm-equivalence in \eqref{ih6g6GGG} have been established. Granted these, all other claims 
follow upon recalling from \cite[(10.1.103)-(10.1.104), p.\,625]{GHA.II} that 
if $\Omega\subseteq{\mathbb{R}}^n$ is an Ahlfors regular domain, $\nu$ denotes its geometric measure theoretic outward unit normal, 
$\sigma:={\mathcal{H}}^{\,n-1}\lfloor\partial\Omega$ is the surface measure on $\partial\Omega$, and 
if $F\in{\mathscr{C}}^{\,\infty}(\Omega)\otimes{\mathcal{C}}\!\ell_n$ 
is a function which, for some $\kappa>0$, satisfies (with $D$ denoting the associated Dirac operator) 
\begin{equation}\label{u6ggBly5-HAR.HHH.LL}
\begin{array}{c}
DF=0\,\,\text{ in }\,\,\Omega,\,\,\,{\mathcal{N}}_{\kappa}F\in L^1(\partial\Omega,\sigma)\,\,\text{ and}
\\[4pt]
F\big|^{{}^{\kappa-{\rm n.t.}}}_{\partial\Omega}\,\,\text{ exists (in ${\mathcal{C}}\!\ell_n$) at $\sigma$-a.e. point on }\,\,\partial\Omega,
\end{array}
\end{equation}
then 
\begin{equation}\label{u6ggBly5-HAR.HHH.jj}
\begin{array}{c}
\nu\odot\big(F\big|^{{}^{\kappa-{\rm n.t.}}}_{\partial\Omega}\big)
\in H^1(\partial\Omega,\sigma)\otimes{\mathcal{C}}\!\ell_n\,\,\text{ and}
\\[6pt]
\big\|\nu\odot\big(F\big|^{{}^{\kappa-{\rm n.t.}}}_{\partial\Omega}\big)
\big\|_{H^1(\partial\Omega,\sigma)\otimes{\mathcal{C}}\!\ell_n}\leq 
C\|{\mathcal{N}}_{\kappa}F\|_{L^1(\partial\Omega,\sigma)},
\end{array}
\end{equation}
for some $C\in(0,\infty)$ independent of $F$. Here, we have also used that 
\begin{equation}\label{u7yggGFDD.gffc.IIII.L1}
H^1(\partial\Omega,\sigma)\hookrightarrow L^1(\partial\Omega,\sigma)\,\,\text{ continuously},
\end{equation}
and that $\nu\odot\nu=-1$ (cf. \eqref{X-sqr}).
\end{proof}

Let $\Omega\subset{\mathbb{R}}^{n}$ be an open set of locally finite perimeter, let $\nu$ be its geometric 
measure theoretic outward unit normal, canonically identified with a Clifford algebra-valued function on $\partial\Omega$
as in \eqref{utggGYHNN.iii}, and set $\sigma:={\mathcal{H}}^{\,n-1}\lfloor\,\partial\Omega$. 
Define the action of the (boundary-to-domain) Cauchy-Clifford 
integral operator ${\mathcal{C}}$ on any 
$f\in L^1\big(\partial_\ast\Omega,\frac{\sigma(x)}{1+|x|^{n-1}}\big)\otimes{\mathcal{C}}\!\ell_{n}$ according to 
\begin{equation}\label{Cau-C1.iii}
{\mathcal{C}}f(x):=\frac{1}{\omega_{n-1}}\int_{\partial_\ast\Omega}
\frac{x-y}{|x-y|^n}\odot\nu(y)\odot f(y)\,d\sigma(y),\qquad\forall\,x\in\Omega, 
\end{equation}
and define the action of its principal-value (or, boundary-to-boundary) version ${\mathfrak{C}}$ on $f$, at 
$\sigma$-a.e.$x\in\partial\Omega$, by
\begin{equation}\label{Cau-C3.iii}
{\mathfrak{C}}f(x):=\lim_{\varepsilon\to 0^{+}}\frac{1}{\omega_{n-1}}
\int\limits_{\substack{y\in\partial_\ast\Omega\\ |x-y|>\varepsilon}}
\frac{x-y}{|x-y|^n}\odot\nu(y)\odot f(y)\,d\sigma(y).
\end{equation}
A key feature of \eqref{Cau-C1.iii} is that for each function 
$f\in L^1\big(\partial_\ast\Omega,\frac{\sigma(x)}{1+|x|^{n-1}}\big)\otimes{\mathcal{C}}\!\ell_{n}$ one has
\begin{equation}\label{dble-layer-PDE-cc}
{\mathcal{C}}f\in{\mathscr{C}}^{\infty}(\Omega)\,\,\text{ and }\,\,D({\mathcal{C}}f)=0\,\,\text{ in }\,\,\Omega.
\end{equation}
Moreover, under the assumption that actually $\partial\Omega$ is a {\rm UR} set, 
for each aperture parameter $\kappa>0$ one has (cf. \cite[(2.4.10), p.\,384]{GHA.III})
\begin{equation}\label{eq:u7766Rdqarf}
\begin{array}{c}
{\mathcal{N}}_{\kappa}\big({\mathcal{C}}f\big)\in L^{1,\infty}(\partial\Omega,\sigma)\,\,\text{ and}
\\[4pt]
\big\|{\mathcal{N}}_{\kappa}\big({\mathcal{C}}f\big)
\big\|_{L^{1,\infty}(\partial\Omega,\sigma)}\leq C\|f\|_{L^1(\partial_\ast\Omega,\sigma)\otimes{\mathcal{C}}\!\ell_{n}}
\end{array}
\end{equation}
for each $f\in L^1(\partial_\ast\Omega,\sigma)\otimes{\mathcal{C}}\!\ell_{n}$, for some constant $C=C(\Omega,\kappa)\in(0,\infty)$ independent of $f$, 
and the nontangential boundary trace formula 
\begin{equation}\label{R=R-cc}
\big({\mathcal{C}}f\big)\big|^{{}^{\kappa-{\rm n.t.}}}_{\partial\Omega}
=\big(\tfrac{1}{2}I+{\mathfrak{C}}\big)f\,\,\text{ at $\sigma$-a.e. point on }\,\,\partial_\ast\Omega
\end{equation}
holds for all $f\in L^1\big(\partial_\ast\Omega,\frac{\sigma(x)}{1+|x|^{n-1}}\big)\otimes{\mathcal{C}}\!\ell_{n}$
(cf. \cite[Example~1.4.12, pp.\,62--63]{GHA.IV} and \cite[(1.5.20), p.\,89]{GHA.IV}). 

The Clifford algebra formalism also allows us to consider the Riesz transforms bundled together, into a single entity, we 
call the boundary-to-domain {\tt Clifford}-{\tt Riesz} {\tt transform}. Its action on 
$f\in L^1\big(\partial\Omega\,,\,\frac{\sigma(x)}{1+|x|^{n-1}}\big)\otimes{\mathcal{C}}\!\ell_{n}$ is defined as 
\begin{equation}\label{7hg7gv-VVV.a}
{\mathcal{R}}_{{\mathcal{C}}\!\ell}f(x):=\frac{2}{\omega_{n-1}}\int_{\partial\Omega}
\frac{x-y}{|x-y|^n}\odot f(y)\,d\sigma(y),\quad\forall\,x\in\Omega. 
\end{equation}
It has been shown in \cite[Theorem~2.1.6, pp.\,251--252]{GHA.IV} that whenever $\Omega$ is a {\rm UR} domain, and for any 
$\kappa\in(0,\infty)$, the Clifford-Riesz transform ${\mathcal{R}}_{{\mathcal{C}}\!\ell}$ satisfies
\begin{equation}\label{7hg7gv-VVV.h}
\big\|{\mathcal{N}}_{\kappa}\big({\mathcal{R}}_{{\mathcal{C}}\!\ell}f\big)
\big\|_{L^1(\partial\Omega,\sigma)}\leq C\|f\|_{H^1(\partial\Omega,\sigma)\otimes{\mathcal{C}}\!\ell_{n}}
\end{equation}
for all $f\in H^1(\partial\Omega,\sigma)\otimes{\mathcal{C}}\!\ell_{n}$, for some 
$C=C(\Omega,\kappa)\in(0,\infty)$ independent of $f$. 

Our next result indicates that, within the context of infinitesimally flat {\rm AR} domains, if a Clifford algebra-valued function 
leads to a better membership than predicted in \eqref{eq:u7766Rdqarf} it necessarily belongs to a smaller space, specifically $H^1$.

\begin{theorem}\label{yrFCC-TT.aat}
Let $\Omega$ be an infinitesimally flat {\rm AR} domain in $\mathbb{R}^{n}$. Denote by $\nu$ its geometric measure 
theoretic outward unit normal, canonically identified with a Clifford algebra-valued function on $\partial\Omega$
as in \eqref{utggGYHNN.iii}, and set $\sigma:={\mathcal{H}}^{n-1}\lfloor\partial\Omega$. Also, fix some 
aperture parameter $\kappa>0$. Then for any function $f\in L^{1}(\partial\Omega,\sigma)\otimes{\mathcal{C}}\!\ell_n$ 
one has
\begin{equation}\label{ih6gAtfvv}
{\mathcal{N}}_{\kappa}({\mathcal{C}}f)\in L^{1}(\partial\Omega,\sigma)\Longleftrightarrow
\nu\odot f\in H^{1}(\partial\Omega,\sigma)\otimes{\mathcal{C}}\!\ell_n,
\end{equation} 
in a quantitative fashion. 
\end{theorem}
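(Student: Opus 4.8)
The plan is to realize $\mathcal{C}f$ as a monogenic function in $\Omega$ to which Proposition~\ref{yrFCC-TT} applies verbatim, and then to run that same proposition a second time on the complementary domain $\mathbb{R}^n\setminus\overline{\Omega}$, linking the two via the Plemelj jump relations. First I would check that $u:=\mathcal{C}f$ satisfies the hypotheses in \eqref{ih6gAAA}: it is smooth and monogenic in $\Omega$ by \eqref{dble-layer-PDE-cc}; one has $\mathcal{N}_{\kappa}u\in L^{1,\infty}(\partial\Omega,\sigma)$ by \eqref{eq:u7766Rdqarf} (an infinitesimally flat {\rm AR} domain is a {\rm UR} domain by Theorem~\ref{6trrf.TT.ccc.WACO.222.NEW.AR}); and when $\Omega$ is an exterior domain, the compactness of $\partial\Omega$ together with the size $|x-y|^{1-n}$ of the Cauchy--Clifford kernel forces $u(x)=O(|x|^{1-n})=o(1)$ as $|x|\to\infty$. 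Proposition~\ref{yrFCC-TT} then supplies the fourfold chain of equivalences \eqref{ih6g6FFF} and the norm equivalences \eqref{ih6g6GGG}, where, by the jump formula \eqref{R=R-cc}, the trace equals $a:=\big(\tfrac12 I+\mathfrak{C}\big)f$. In particular, $\mathcal{N}_{\kappa}(\mathcal{C}f)\in L^{1}\Leftrightarrow a\in L^{1}(\partial\Omega,\sigma)\otimes\mathcal{C}\!\ell_n\Leftrightarrow\nu\odot a\in H^{1}(\partial\Omega,\sigma)\otimes\mathcal{C}\!\ell_n$, with comparable norms.

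For the implication ``$\Leftarrow$'', associativity of the Clifford product gives $\mathcal{C}f=\tfrac12\,\mathcal{R}_{\mathcal{C}\!\ell}(\nu\odot f)$ in $\Omega$ (compare \eqref{Cau-C1.iii} with \eqref{7hg7gv-VVV.a}), so if $\nu\odot f\in H^{1}$ then $\mathcal{N}_{\kappa}(\mathcal{C}f)\in L^{1}$ with $\|\mathcal{N}_{\kappa}(\mathcal{C}f)\|_{L^{1}}\le C\|\nu\odot f\|_{H^{1}\otimes\mathcal{C}\!\ell_n}$, by the mapping property \eqref{7hg7gv-VVV.h}. For ``$\Rightarrow$'', I would additionally invoke the domain $\Omega_{-}:=\mathbb{R}^n\setminus\overline{\Omega}$, which by Theorem~\ref{6trrf.TT.ccc.WACO.222.NEW.AR} is again an infinitesimally flat {\rm AR} domain sharing the boundary $\partial\Omega$, with outward unit normal $\nu_{-}=-\nu$. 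The function $v$ defined on $\Omega_{-}$ by the same integral as in \eqref{Cau-C1.iii} is monogenic there, its nontangential maximal function relative to $\Omega_{-}$ lies in $L^{1,\infty}(\partial\Omega,\sigma)$ (being, up to sign, the Cauchy--Clifford integral of $f$ for $\Omega_{-}$, to which \eqref{eq:u7766Rdqarf} applies), and it decays at infinity when $\Omega_{-}$ is exterior; hence Proposition~\ref{yrFCC-TT} applies to $v$ on $\Omega_{-}$ too, its trace being $b:=\big(-\tfrac12 I+\mathfrak{C}\big)f$ by the jump formula for $\Omega_{-}$ (bookkeeping the sign flips of $\nu$, of the Cauchy operator, and of the principal-value operator). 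Now if $\mathcal{N}_{\kappa}(\mathcal{C}f)\in L^{1}$ then $a\in L^{1}$, whence $b=a-f\in L^{1}$ (recall $f\in L^{1}$), so the $\Omega_{-}$ instance of \eqref{ih6g6FFF} forces $\nu_{-}\odot b\in H^{1}$, i.e. $\nu\odot b\in H^{1}$. Since also $\nu\odot a\in H^{1}$, the Plemelj identity $f=a-b$ yields $\nu\odot f=\nu\odot a-\nu\odot b\in H^{1}(\partial\Omega,\sigma)\otimes\mathcal{C}\!\ell_n$. Tracking constants through \eqref{ih6g6GGG} on both $\Omega$ and $\Omega_{-}$, together with \eqref{u7yggGFDD.gffc.IIII.L1} and the pointwise identity $|\nu\odot f|=|f|$, then gives the quantitative form $\|\nu\odot f\|_{H^{1}\otimes\mathcal{C}\!\ell_n}\approx\|\mathcal{N}_{\kappa}(\mathcal{C}f)\|_{L^{1}(\partial\Omega,\sigma)}+\|f\|_{L^{1}(\partial\Omega,\sigma)\otimes\mathcal{C}\!\ell_n}$.

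The main obstacle is the direction ``$\Rightarrow$'': it cannot be finished inside $\Omega$ alone, since that route stalls at ``$\nu\odot\big(\tfrac12 I+\mathfrak{C}\big)f\in H^{1}\Leftrightarrow\nu\odot f\in H^{1}$'', and multiplication by the merely $L^{\infty}\cap{\rm VMO}$ normal $\nu$ does not preserve $H^{1}$. The device that resolves this is to let $\Omega_{-}$ carry the ``complementary half'': the jump relation $a-b=f$ splits $\nu\odot f$ as $\nu\odot a$ (handled by Proposition~\ref{yrFCC-TT} on $\Omega$) minus $\nu\odot b$ (handled by the same proposition on $\Omega_{-}$), and crucially $b=a-f$ already lies in $L^{1}$ by the hypotheses --- exactly the input the $\Omega_{-}$ proposition needs. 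One should also note that the $\|f\|_{L^1}$ term genuinely belongs in the quantitative statement, since the kernel of $\tfrac12 I+\mathfrak{C}$ --- consisting of traces from $\Omega_{-}$ of functions monogenic in $\Omega_{-}$ --- is in general nontrivial, so $\|\mathcal{N}_{\kappa}(\mathcal{C}f)\|_{L^1}$ alone cannot dominate $\|\nu\odot f\|_{H^1}$.
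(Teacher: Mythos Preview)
Your proof is correct and follows essentially the same approach as the paper's own argument: both use \eqref{7hg7gv-VVV.h} together with $\mathcal{C}f=\tfrac12\mathcal{R}_{\mathcal{C}\!\ell}(\nu\odot f)$ for the ``$\Leftarrow$'' direction, and for ``$\Rightarrow$'' both apply Proposition~\ref{yrFCC-TT} first on $\Omega$ (yielding $\nu\odot a\in H^1$) and then on $\Omega_{-}$ (using $b=a-f\in L^1$ to obtain $\nu\odot b\in H^1$), and subtract via the Plemelj jump identity $f=a-b$. Your remark that the quantitative estimate necessarily carries an $\|f\|_{L^1}$ term is a correct and worthwhile clarification of what ``in a quantitative fashion'' means here.
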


\begin{proof}
To get started, observe that the left-pointing implication in \eqref{ih6gAtfvv} is a consequence 
of \eqref{Cau-C1.iii}, \eqref{7hg7gv-VVV.a}, and \eqref{7hg7gv-VVV.h}. There remains to 
establish that, given $f\in L^{1}(\partial\Omega,\sigma)\otimes{\mathcal{C}}\!\ell_n$ 
with ${\mathcal{N}}_{\kappa}({\mathcal{C}}f)\in L^{1}(\partial\Omega,\sigma)$, we 
necessarily have $\nu\odot f\in H^{1}(\partial\Omega,\sigma)\otimes{\mathcal{C}}\!\ell_n$. 
To this end, define 
\begin{equation}\label{eq:py5r42rf}
\Omega_{+}:=\Omega\,\,\text{ and }\,\,\Omega_{-}:={\mathbb{R}}^n\setminus\overline{\Omega},
\end{equation}
and recall from Theorem~\ref{6trrf.TT.ccc.WACO.222.NEW.AR} that $\Omega_{-}$ is also an infinitesimally flat {\rm AR} domain whose 
boundary is $\partial\Omega$, and whose geometric measure theoretic outward unit normal is $-\nu$. If we define 
\begin{equation}\label{Cau-C1.iii.UUU}
u_{\pm}(x):=\frac{1}{\omega_{n-1}}\int_{\partial\Omega}
\frac{x-y}{|x-y|^n}\odot\nu(y)\odot f(y)\,d\sigma(y),\qquad\forall\,x\in\Omega_{\pm},
\end{equation}
then from \eqref{dble-layer-PDE-cc}, the working assumptions, \eqref{eq:u7766Rdqarf}, and \eqref{R=R-cc} we have
\begin{equation}\label{eq:UTYRfa7tr5fc}
\begin{array}{c}
u_{\pm}\in{\mathscr{C}}^{\,\infty}(\Omega)\otimes{\mathcal{C}}\!\ell_n,\quad Du_{\pm}=0\,\,\text{ in }\,\,\Omega_{\pm},
\\[4pt]
{\mathcal{N}}_{\kappa}u_{+}\in L^1(\partial\Omega,\sigma),\quad{\mathcal{N}}_{\kappa}u_{-}\in L^{1,\infty}(\partial\Omega,\sigma),
\\[4pt]
u_{\pm}\big|^{{}^{\kappa-{\rm n.t.}}}_{\partial\Omega}=\big(\pm\tfrac{1}{2}I+{\mathfrak{C}}\big)f\,\,\text{ at $\sigma$-a.e. point on $\partial\Omega$},
\\[4pt]
\text{and $u_{\pm}(x)=o(1)$ as $|x|\to\infty$ if $\Omega_{\pm}$ is an exterior domain}.
\end{array}
\end{equation}
In particular, ${\mathcal{N}}_{\kappa}u_{+}\in L^1(\partial\Omega,\sigma)$ implies that
\begin{equation}\label{eq:975er5er5we}
u_{+}\big|^{{}^{\kappa-{\rm n.t.}}}_{\partial\Omega}\in L^{1}(\partial\Omega,\sigma)\otimes{\mathcal{C}}\!\ell_n.
\end{equation}
Based on this, \eqref{eq:UTYRfa7tr5fc}, and Proposition~\ref{yrFCC-TT} we then see that
\begin{equation}\label{ih6g6Fyf.FVV.kj.A}
\nu\odot\Big(\big(\tfrac{1}{2}I+{\mathfrak{C}}\big)f\Big)
=\nu\odot\Big(u_{+}\big|^{{}^{\kappa-{\rm n.t.}}}_{\partial\Omega}\Big)\in H^{1}(\partial\Omega,\sigma)\otimes{\mathcal{C}}\!\ell_n.
\end{equation} 
Also, the first jump-formula in \eqref{eq:UTYRfa7tr5fc} and \eqref{eq:975er5er5we} imply
\begin{equation}\label{ih6g6Fyf.1}
(\tfrac{1}{2}I+{\mathfrak{C}}\big)f=u_{+}\big|^{{}^{\kappa-{\rm n.t.}}}_{\partial\Omega}\in L^{1}(\partial\Omega,\sigma)\otimes{\mathcal{C}}\!\ell_n
\end{equation} 
which (in light of the last jump-formula in \eqref{eq:UTYRfa7tr5fc}) further entails 
\begin{equation}\label{ih6g6Fyf.2}
u_{-}\big|^{{}^{\kappa-{\rm n.t.}}}_{\partial\Omega}=
\big(-\tfrac{1}{2}I+{\mathfrak{C}}\big)f=\big(\tfrac{1}{2}I+{\mathfrak{C}}\big)f-f\in L^{1}(\partial\Omega,\sigma)\otimes{\mathcal{C}}\!\ell_n.
\end{equation} 
In turn, based on \eqref{ih6g6Fyf.2} and Proposition~\ref{yrFCC-TT} we conclude that
\begin{equation}\label{ih6g6Fyf.FVV.kj.Ai8t}
\nu\odot\Big(\big(-\tfrac{1}{2}I+{\mathfrak{C}}\big)f\Big)
=\nu\odot\Big(u_{-}\big|^{{}^{\kappa-{\rm n.t.}}}_{\partial\Omega}\Big)\in H^{1}(\partial\Omega,\sigma)\otimes{\mathcal{C}}\!\ell_n.
\end{equation}
Ultimately, from \eqref{ih6g6Fyf.FVV.kj.A} and \eqref{ih6g6Fyf.FVV.kj.Ai8t} we see that 
\begin{equation}\label{ih6g6Fyf.FVV.kj.Ai8t.2}
\nu\odot f=\nu\odot\Big(\big(\tfrac{1}{2}I+{\mathfrak{C}}\big)f\Big)
-\nu\odot\Big(\big(-\tfrac{1}{2}I+{\mathfrak{C}}\big)f\Big)\in H^{1}(\partial\Omega,\sigma)\otimes{\mathcal{C}}\!\ell_n,
\end{equation}
plus a naturally accompanying estimate. 
\end{proof}

Here is a version of Theorem~\ref{yrFCC-TT.aat} which features the principal-value Cauchy-Clifford 
integral operator in place of its boundary-to-domain version, appearing in \eqref{ih6gAtfvv}. 
Specifically, while ordinarily ${\mathfrak{C}}$ maps $L^1$ into $L^{1,\infty}$, if a function in $L^1$ is mapped by 
${\mathfrak{C}}$ into $L^1$ then necessarily the function actually belongs to the Hardy space $H^1$.

\begin{theorem}\label{yrFCC-TT.aat.2}
Let $\Omega$ be an infinitesimally flat {\rm AR} domain in $\mathbb{R}^{n}$. Denote by $\nu$ its geometric 
measure theoretic outward unit normal, canonically identified with a Clifford algebra-valued function on $\partial\Omega$
as in \eqref{utggGYHNN.iii}, and set $\sigma:={\mathcal{H}}^{\,n-1}\lfloor\partial\Omega$. Then
for each function $f\in L^{1}(\partial\Omega,\sigma)\otimes{\mathcal{C}}\!\ell_n$ one has
\begin{equation}\label{ih6gAtfvv.cc}
{\mathfrak{C}}f\in L^{1}(\partial\Omega,\sigma)\otimes{\mathcal{C}}\!\ell_n
\Longleftrightarrow\nu\odot f\in H^{1}(\partial\Omega,\sigma)\otimes{\mathcal{C}}\!\ell_n,
\end{equation} 
in a quantitative fashion. 
\end{theorem}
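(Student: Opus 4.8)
The plan is to deduce the asserted equivalence by chaining together the already established Theorem~\ref{yrFCC-TT.aat} with Proposition~\ref{yrFCC-TT}, upon recognizing the boundary-to-domain Cauchy--Clifford extension $\mathcal{C}f$ as an admissible monogenic function. First I would record that, for $f\in L^{1}(\partial\Omega,\sigma)\otimes{\mathcal{C}}\!\ell_n$ (which makes sense in \eqref{Cau-C1.iii} since $\partial\Omega$ is compact, $\Omega$ being an infinitesimally flat {\rm AR} domain, hence either bounded or an exterior domain), the function $u:=\mathcal{C}f$ is smooth and monogenic in $\Omega$ by \eqref{dble-layer-PDE-cc}, satisfies ${\mathcal{N}}_{\kappa}u\in L^{1,\infty}(\partial\Omega,\sigma)$ by \eqref{eq:u7766Rdqarf} (valid because $\partial\Omega$ is a {\rm UR} set, cf. Theorem~\ref{6trrf.TT.ccc.WACO.222.NEW.AR}), and — inspecting the kernel in \eqref{Cau-C1.iii} and recalling $n\geq 2$ — obeys $u(x)=O(|x|^{1-n})=o(1)$ as $|x|\to\infty$ whenever $\Omega$ is an exterior domain. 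Thus $u$ meets the hypotheses \eqref{ih6gAAA} of Proposition~\ref{yrFCC-TT}, whose conclusion, combined with the jump formula \eqref{R=R-cc} identifying $(\mathcal{C}f)\big|^{{}^{\kappa-{\rm n.t.}}}_{\partial\Omega}=\big(\tfrac12 I+\mathfrak{C}\big)f$, yields
\[
{\mathcal{N}}_{\kappa}(\mathcal{C}f)\in L^{1}(\partial\Omega,\sigma)
\Longleftrightarrow
\big(\tfrac12 I+\mathfrak{C}\big)f\in L^{1}(\partial\Omega,\sigma)\otimes{\mathcal{C}}\!\ell_n,
\]
in a quantitative fashion, with the accompanying norm comparison coming from \eqref{ih6g6GGG}.

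Next I would observe that, since $f\in L^{1}(\partial\Omega,\sigma)\otimes{\mathcal{C}}\!\ell_n$ by hypothesis, the membership $\big(\tfrac12 I+\mathfrak{C}\big)f\in L^{1}$ is equivalent to $\mathfrak{C}f\in L^{1}$, with $\|\mathfrak{C}f\|_{L^1(\partial\Omega,\sigma)}\leq\big\|\big(\tfrac12 I+\mathfrak{C}\big)f\big\|_{L^1(\partial\Omega,\sigma)}+\tfrac12\|f\|_{L^1(\partial\Omega,\sigma)}$ and conversely. Combining this elementary remark with the displayed equivalence gives
\[
\mathfrak{C}f\in L^{1}(\partial\Omega,\sigma)\otimes{\mathcal{C}}\!\ell_n
\Longleftrightarrow
{\mathcal{N}}_{\kappa}(\mathcal{C}f)\in L^{1}(\partial\Omega,\sigma),
\]
again quantitatively. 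Finally, Theorem~\ref{yrFCC-TT.aat} supplies
\[
{\mathcal{N}}_{\kappa}(\mathcal{C}f)\in L^{1}(\partial\Omega,\sigma)
\Longleftrightarrow
\nu\odot f\in H^{1}(\partial\Omega,\sigma)\otimes{\mathcal{C}}\!\ell_n,
\]
and chaining the last two equivalences produces precisely \eqref{ih6gAtfvv.cc}, together with the asserted quantitative control — here one also invokes $H^1(\partial\Omega,\sigma)\hookrightarrow L^1(\partial\Omega,\sigma)$ from \eqref{u7yggGFDD.gffc.IIII.L1} to keep the various norm estimates mutually consistent.

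Since every link in this chain has already been proved, I do not anticipate a genuine obstacle; the single point that warrants a moment's care is checking that $\mathcal{C}f$ indeed fits the template of Proposition~\ref{yrFCC-TT}, namely the weak-type $L^1$ bound on ${\mathcal{N}}_{\kappa}(\mathcal{C}f)$ and the decay at infinity in the exterior case, both of which are available off the shelf. An alternative route would mimic the proof of Theorem~\ref{yrFCC-TT.aat} and also bring in $u_{-}:=\mathcal{C}f\big|_{{\mathbb{R}}^n\setminus\overline{\Omega}}$ on the complementary infinitesimally flat {\rm AR} domain, but invoking Proposition~\ref{yrFCC-TT} for $u_{+}=\mathcal{C}f$ alone already closes the loop.
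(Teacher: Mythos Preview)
Your argument is correct. The chain
\[
\mathfrak{C}f\in L^1
\;\Longleftrightarrow\;
\big(\tfrac12 I+\mathfrak{C}\big)f\in L^1
\;\Longleftrightarrow\;
{\mathcal{N}}_\kappa(\mathcal{C}f)\in L^1
\;\Longleftrightarrow\;
\nu\odot f\in H^1
\]
is fully justified: the first step is trivial since $f\in L^1$, the second is Proposition~\ref{yrFCC-TT} applied to $u=\mathcal{C}f$ (and you verified the hypotheses \eqref{ih6gAAA} correctly, including the $O(|x|^{1-n})$ decay in the exterior case), and the third is Theorem~\ref{yrFCC-TT.aat}. The quantitative control also goes through, once you note $\|f\|_{L^1}=\|\nu\odot f\|_{L^1}\leq C\|\nu\odot f\|_{H^1}$ via \eqref{u7yggGFDD.gffc.IIII.L1}.

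Your route differs from the paper's in packaging rather than substance. The paper does \emph{not} invoke Theorem~\ref{yrFCC-TT.aat}; instead it works directly with both Cauchy--Clifford extensions $u_\pm$ on $\Omega_\pm$ and applies Proposition~\ref{yrFCC-TT} separately on each side (the ``alternative route'' you mention at the end). For the direction $\mathfrak{C}f\in L^1\Rightarrow\nu\odot f\in H^1$ this yields $\nu\odot\big(\pm\tfrac12 I+\mathfrak{C}\big)f\in H^1$, and subtraction gives $\nu\odot f\in H^1$. Your approach is more modular --- you recognize that the two-sided work was already done inside Theorem~\ref{yrFCC-TT.aat} and reuse it --- whereas the paper's argument is self-contained and does not rely on the preceding theorem. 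Both lead to the same estimates; yours is slightly shorter on the page.
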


\begin{proof}
Let $f\in L^{1}(\partial\Omega,\sigma)\otimes{\mathcal{C}}\!\ell_n$ be such that
$\nu\odot f\in H^{1}(\partial\Omega,\sigma)\otimes{\mathcal{C}}\!\ell_n$. Introduce $\Omega_{\pm}$ as in \eqref{eq:py5r42rf}
and define $u_{\pm}$ as in \eqref{Cau-C1.iii.UUU}. From \eqref{7hg7gv-VVV.h} we see that 
${\mathcal{N}}_{\kappa}u_{\pm}\in L^1(\partial\Omega,\sigma)$ which, together with \eqref{R=R-cc}, implies
\begin{align}\label{eq:uyttwrf5r3}
{\mathfrak{C}}f &=\frac{1}{2}\Big\{\big(\tfrac{1}{2}I+{\mathfrak{C}}\big)f+\big(-\tfrac{1}{2}I+{\mathfrak{C}}\big)f\Big\}
\nonumber\\[4pt]
&=\tfrac{1}{2}\big\{u_{+}\big|^{{}^{\kappa-{\rm n.t.}}}_{\partial\Omega}+u_{-}\big|^{{}^{\kappa-{\rm n.t.}}}_{\partial\Omega}\big\}
\in L^{1}(\partial\Omega,\sigma)\otimes{\mathcal{C}}\!\ell_n,
\end{align}
as desired. In the opposite direction, let $f\in L^{1}(\partial\Omega,\sigma)\otimes{\mathcal{C}}\!\ell_n$ be such that
${\mathfrak{C}}f\in L^{1}(\partial\Omega,\sigma)\otimes{\mathcal{C}}\!\ell_n$. Then 
\begin{equation}\label{eq:uitwrf8909}
\big(\pm\tfrac{1}{2}I+{\mathfrak{C}}\big)\in L^{1}(\partial\Omega,\sigma)\otimes{\mathcal{C}}\!\ell_n.
\end{equation}
With $\Omega_{\pm}$ as in \eqref{eq:py5r42rf} and $u_{\pm}$ as in \eqref{Cau-C1.iii.UUU}, we then have 
\begin{equation}\label{eq:UTYRfa7tr5fc.D}
\begin{array}{c}
u_{\pm}\in{\mathscr{C}}^{\,\infty}(\Omega)\otimes{\mathcal{C}}\!\ell_n,\quad Du_{\pm}=0\,\,\text{ in }\,\,\Omega_{\pm},
\\[4pt]
{\mathcal{N}}_{\kappa}u_{+}\in L^{1,\infty}(\partial\Omega,\sigma),\quad{\mathcal{N}}_{\kappa}u_{-}\in L^{1,\infty}(\partial\Omega,\sigma),
\\[4pt]
u_{\pm}\big|^{{}^{\kappa-{\rm n.t.}}}_{\partial\Omega}=\big(\pm\tfrac{1}{2}I+{\mathfrak{C}}\big)f\in L^{1}(\partial\Omega,\sigma)\otimes{\mathcal{C}}\!\ell_n,
\\[4pt]
\text{and $u_{\pm}(x)=o(1)$ as $|x|\to\infty$ if $\Omega_{\pm}$ is an exterior domain},
\end{array}
\end{equation}
thanks to \eqref{dble-layer-PDE-cc}, \eqref{R=R-cc}, and \eqref{eq:u7766Rdqarf}.
Granted these properties, Proposition~\ref{yrFCC-TT}  applies and gives that
\begin{equation}\label{eq:861t51rPLL}
\nu\odot\Big(\big(\pm\tfrac{1}{2}I+{\mathfrak{C}}\big)f\Big)
=\nu\odot\big(u_{\pm}\big|^{{}^{\kappa-{\rm n.t.}}}_{\partial\Omega}\big)
\in H^{1}(\partial\Omega,\sigma)\otimes{\mathcal{C}}\!\ell_n
\end{equation}
plus natural estimates. As a consequence, 
\begin{equation}\label{eq:861t51rPLL.2}
\nu\odot f=\nu\odot\Big(\big(\tfrac{1}{2}I+{\mathfrak{C}}\big)f\Big)-\nu\odot\Big(\big(-\tfrac{1}{2}I+{\mathfrak{C}}\big)f\Big)
\in H^{1}(\partial\Omega,\sigma)\otimes{\mathcal{C}}\!\ell_n
\end{equation}
in a quantitative fashion. 
\end{proof}

Moving on, recall the ``principal-value'' Riesz transforms associated with a given {\rm UR} set as in \eqref{Cau-RRj}.
One of the principal results for the portion of the present work pertaining to domains with compact boundaries is the following 
theorem asserting that the Riesz transforms characterize the Hardy space $H^1$ in the more inclusive Clifford algebra formalism. 

\begin{theorem}\label{yrFCC-TT.aat.TRa}
Let $\Omega$ be an infinitesimally flat {\rm AR} domain in $\mathbb{R}^{n}$ and 
set $\sigma:={\mathcal{H}}^{\,n-1}\lfloor\partial\Omega$. Consider the Riesz transforms 
$\{R_j\}_{1\leq j\leq n}$ associated with the {\rm UR} set $\partial\Omega$ as in \eqref{Cau-RRj}
and glue them together into a singular integral operator acting on functions 
in $L^1(\partial\Omega,\sigma)\otimes{\mathcal{C}}\!\ell_n$ according to 
\begin{equation}\label{utggG-TRFF}
R:={\mathbf{e}}_1\odot R_1+\cdots+{\mathbf{e}}_n\odot R_n.
\end{equation}

Then the following statements are equivalent {\rm (}in a natural quantitative fashion{\rm )}:

\begin{enumerate}
\item[(1)] $f\in L^{1}(\partial\Omega,\sigma)\otimes{\mathcal{C}}\!\ell_n$ and 
$R_jf\in L^{1}(\partial\Omega,\sigma)\otimes{\mathcal{C}}\!\ell_n$ for $1\leq j\leq n$;
\item[(2)] $f\in L^{1}(\partial\Omega,\sigma)\otimes{\mathcal{C}}\!\ell_n$ and 
$Rf\in L^{1}(\partial\Omega,\sigma)\otimes{\mathcal{C}}\!\ell_n$;
\item[(3)] $f\in H^{1}(\partial\Omega,\sigma)\otimes{\mathcal{C}}\!\ell_n$.
\end{enumerate}
\end{theorem}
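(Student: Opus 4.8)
The plan is to read the equivalence off the already-established Propositions~\ref{yrFCC}--\ref{yrFCC-TT} and Theorems~\ref{yrFCC-TT.aat}--\ref{yrFCC-TT.aat.2}; the only genuinely new ingredient is a normalization identity matching the Clifford-bundled Riesz operator $R$ from \eqref{utggG-TRFF} with the principal-value Cauchy--Clifford operator $\mathfrak{C}$ from \eqref{Cau-C3.iii}. First I would record this: writing $x-y=\sum_{j=1}^n(x_j-y_j)\mathbf{e}_j$ under the embedding \eqref{embed}, noting that each scalar $\tfrac{x_j-y_j}{|x-y|^n}$ commutes with left Clifford multiplication and that left multiplication by the constants $\mathbf{e}_j$ commutes with integration, and inserting $\nu(y)\odot\nu(y)=-1$ (cf. \eqref{X-sqr}), one checks directly from \eqref{Cau-RRj}, \eqref{utggG-TRFF}, and \eqref{Cau-C3.iii} that, for every $f\in L^1(\partial\Omega,\sigma)\otimes\mathcal{C}\!\ell_n$,
\[
Rf=\lim_{\varepsilon\to 0^+}\frac{2}{\omega_{n-1}}\int\limits_{\substack{y\in\partial\Omega\\ |x-y|>\varepsilon}}\frac{x-y}{|x-y|^n}\odot f(y)\,d\sigma(y)=-2\,\mathfrak{C}\big(\nu\odot f\big)\quad\text{at $\sigma$-a.e. }x\in\partial\Omega .
\]

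With this identity in hand, the core equivalence (2)$\Leftrightarrow$(3) falls out of Theorem~\ref{yrFCC-TT.aat.2}. Given $f\in L^1(\partial\Omega,\sigma)\otimes\mathcal{C}\!\ell_n$, I would set $h:=\nu\odot f$; since $|\nu|=1$ $\sigma$-a.e. we have $h\in L^1(\partial\Omega,\sigma)\otimes\mathcal{C}\!\ell_n$, while $\mathfrak{C}h=-\tfrac12 Rf$ and $\nu\odot h=-f$. Applying Theorem~\ref{yrFCC-TT.aat.2} to $h$ then gives, quantitatively,
\[
Rf\in L^1(\partial\Omega,\sigma)\otimes\mathcal{C}\!\ell_n\ \Longleftrightarrow\ f\in H^1(\partial\Omega,\sigma)\otimes\mathcal{C}\!\ell_n ,
\]
which is precisely (2)$\Leftrightarrow$(3) once one recalls $H^1(\partial\Omega,\sigma)\hookrightarrow L^1(\partial\Omega,\sigma)$, so that the clause ``$f\in L^1$'' in (2) is automatic whenever $f\in H^1$.

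It remains to fold in statement (1). The implication (1)$\Rightarrow$(2) is trivial: $\mathcal{C}\!\ell_n$ is finite-dimensional, so left multiplication by each $\mathbf{e}_j$ is bounded and $\|Rf\|_{L^1}\le C\sum_{j=1}^n\|R_jf\|_{L^1}$. For (3)$\Rightarrow$(1) I would argue as follows: first $f\in H^1\hookrightarrow L^1$; and for each $j$ the kernel $k_j(z)=\tfrac{2}{\omega_{n-1}}\tfrac{z_j}{|z|^n}$ is odd, positive homogeneous of degree $1-n$, and smooth on $\mathbb{R}^n\setminus\{0\}$, so $R_j$ is a singular integral operator of the type \eqref{eq:gy7tucfy.A1} on the compact {\rm UR} set $\partial\Omega$ (cf. Theorem~\ref{6trrf.TT.ccc.WACO.222.NEW.AR}); such operators map $H^1(\partial\Omega,\sigma)$ boundedly into $L^1(\partial\Omega,\sigma)$ by \cite[(2.3.27), p.\,351]{GHA.III}, whence $R_jf\in L^1(\partial\Omega,\sigma)\otimes\mathcal{C}\!\ell_n$ with $\|R_jf\|_{L^1}\le C\|f\|_{H^1}$. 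Chaining (1)$\Rightarrow$(2)$\Rightarrow$(3)$\Rightarrow$(1) closes the cycle, with the norm bounds tracked at every step.

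I do not expect a substantive obstacle here, since the analytic heavy lifting is already contained in Propositions~\ref{yrFCC}--\ref{yrFCC-TT} and Theorems~\ref{yrFCC-TT.aat}--\ref{yrFCC-TT.aat.2}; the two points requiring care are, first, that one should not try to recover (1) from the bundled statement ``$Rf\in L^1$'' alone, because the map $(g_1,\dots,g_n)\mapsto\sum_j\mathbf{e}_j\odot g_j$ on $(\mathcal{C}\!\ell_n)^n$ is not injective when $n\ge2$ — this is exactly why (3)$\Rightarrow$(1) must route through the classical $H^1\to L^1$ mapping property of each individual $R_j$ rather than through the Clifford machinery — and, second, the sign and normalization in $Rf=-2\,\mathfrak{C}(\nu\odot f)$, together with the fact that Theorem~\ref{yrFCC-TT.aat.2} is to be applied to $h=\nu\odot f$ and not to $f$ itself, so that the quantitative equivalences compose in the right direction.
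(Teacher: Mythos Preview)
Your proof is correct and follows essentially the same route as the paper: the key identity $Rf=-2\,\mathfrak{C}(\nu\odot f)$ (equivalently, the paper's $\mathfrak{C}g=\tfrac{1}{2}R(\nu\odot g)$ with $g=\nu\odot f$) reduces (2)$\Leftrightarrow$(3) to Theorem~\ref{yrFCC-TT.aat.2}, while (1)$\Rightarrow$(2) is trivial and (3)$\Rightarrow$(1) comes from the $H^1\to L^1$ boundedness of each individual $R_j$. Your remark that (1) cannot be deduced from (2) by unbundling the Clifford sum is a worthwhile observation not made explicit in the paper, but the overall argument is the same.
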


\begin{proof}
That (1) $\Rightarrow$ (2) is clear from \eqref{utggG-TRFF}. To show that 
(2) $\Rightarrow$ (3), denote by $\nu$ the geometric measure theoretic outward 
unit normal to $\Omega$, canonically identified with a Clifford algebra-valued function 
on $\partial\Omega$ as in \eqref{utggGYHNN.iii}, and consider a function $f$ as in (2).  
If we define
\begin{equation}\label{utggG-TRFF.2}
g:=\nu\odot f\in L^{1}(\partial\Omega,\sigma)\otimes{\mathcal{C}}\!\ell_n,
\end{equation}
then $\nu\odot g=-f$ (since $\nu\odot\nu=-1$, as seen from \eqref{X-sqr}) hence 
\begin{equation}\label{utggG-TRFF.3}
{\mathfrak{C}}g=\tfrac{1}{2}R(\nu\odot g)=-\tfrac{1}{2}Rf\in L^{1}(\partial\Omega,\sigma)\otimes{\mathcal{C}}\!\ell_n.
\end{equation}
Granted \eqref{utggG-TRFF.2}-\eqref{utggG-TRFF.3}, Theorem~\ref{yrFCC-TT.aat.2} applies and gives that
\begin{equation}\label{utggG-TRFF.rt}
f=-\nu\odot g\in H^{1}(\partial\Omega,\sigma)\otimes{\mathcal{C}}\!\ell_n,
\end{equation}
proving (3). Finally, that (3) $\Rightarrow$ (1) is seen from \eqref{u7yggGFDD.gffc.IIII.L1}
and \cite[Theorem~2.3.2, pp.\,348--357, item (6)]{GHA.III}.
\end{proof}

When specialized to scalar-valued functions, Theorem~\ref{yrFCC-TT.aat.TRa} shows that
\begin{equation}\label{7y5t6tt}
\parbox{8.70cm}{assuming $\Omega\subset\mathbb{R}^{n}$ is an infinitesimally flat {\rm AR} domain
and $\sigma={\mathcal{H}}^{n-1}\lfloor\partial\Omega$, then for each function $f\in L^{1}(\partial\Omega,\sigma)$ 
one has $R_jf\in L^{1}(\partial\Omega,\sigma)$ for all $j\in\{1,\dots,n\}$
if and only if the function $f$ belongs to the Hardy space $H^{1}(\partial\Omega,\sigma)$.}
\end{equation}
This proves \eqref{7ggVV}. In concert with the Hahn-Banach Theorem and the duality result 
\begin{equation}\label{KDLE20-Bis}
\big(H^1(\partial\Omega,\sigma)\big)^\ast={\rm BMO}(\partial\Omega,\sigma)
\end{equation}
(cf. \cite[Theorem~4.6.1, pp.\,182--184]{GHA.II}), 
this characterization implies the following version of the classical Fefferman-Stein representation theorem (cf. \cite[Theorem~3, p.\,145]{FeSt72}).

\begin{theorem}\label{u6g5rfffc}
Let $\Omega\subset\mathbb{R}^{n}$ be an infinitesimally flat {\rm AR} domain
and set $\sigma:={\mathcal{H}}^{\,n-1}\lfloor\partial\Omega$. Then each 
$f\in{\rm BMO}(\partial\Omega,\sigma)$ may be expressed as 
\begin{equation}\label{utggG-TRFF.rt.A}
f=f_0+\sum_{j=1}^n R_jf_j\,\,\text{ on }\,\,\partial\Omega, 
\end{equation}
for some $f_0,f_1,\dots,f_n\in L^\infty(\partial\Omega,\sigma)$ with 
\begin{equation}\label{utggG-TRFF.rt.B}
\|f_0\|_{L^{\infty}(\partial\Omega,\sigma)}+\sum_{j=1}^n\|f_j\|_{L^{\infty}(\partial\Omega,\sigma)}
\leq C\|f\|_{{\rm BMO}(\partial\Omega,\sigma)}
\end{equation}
where the constant $C\in(0,\infty)$ is independent of $f$. 
\end{theorem}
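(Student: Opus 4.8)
The plan is to run the classical Fefferman--Stein duality argument, now fueled by the characterization of $H^{1}(\partial\Omega,\sigma)$ recorded in \eqref{7y5t6tt} (the scalar case of Theorem~\ref{yrFCC-TT.aat.TRa}). First I would introduce the Banach space $Y:=\bigoplus_{j=0}^{n}L^{1}(\partial\Omega,\sigma)$, normed by $\|(h_{0},\dots,h_{n})\|_{Y}:=\sum_{j=0}^{n}\|h_{j}\|_{L^{1}(\partial\Omega,\sigma)}$, together with the linear map
\begin{equation*}
\iota\colon H^{1}(\partial\Omega,\sigma)\longrightarrow Y,\qquad \iota h:=(h,\,R_{1}h,\,\dots,\,R_{n}h).
\end{equation*}
Since $H^{1}(\partial\Omega,\sigma)\hookrightarrow L^{1}(\partial\Omega,\sigma)$ by \eqref{u7yggGFDD.gffc.IIII.L1} and each $R_{j}$ maps $H^{1}(\partial\Omega,\sigma)$ boundedly into $L^{1}(\partial\Omega,\sigma)$ (cf. \cite[Theorem~2.3.2, pp.\,348--357, item (6)]{GHA.III} or \cite[(2.3.27), p.\,351]{GHA.III}), this $\iota$ is well defined and bounded; and \eqref{7y5t6tt}, read quantitatively (as the statement of Theorem~\ref{yrFCC-TT.aat.TRa} permits, or directly via the open mapping theorem after noting that $\iota\big(H^{1}(\partial\Omega,\sigma)\big)$ is closed in $Y$, thanks to the weak--$(1,1)$ bound for the $R_{j}$'s from \cite[(2.3.19), p.\,350]{GHA.III} and \eqref{7y5t6tt}), shows that $\iota$ is bounded below. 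Hence $\iota$ is an isomorphism of $H^{1}(\partial\Omega,\sigma)$ onto a closed subspace of $Y$.

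Next, given $f\in{\rm BMO}(\partial\Omega,\sigma)$, I would use \eqref{KDLE20-Bis} to view $f$ as a bounded functional $\Lambda_{f}$ on $H^{1}(\partial\Omega,\sigma)$ with $\|\Lambda_{f}\|\le C\|f\|_{{\rm BMO}(\partial\Omega,\sigma)}$, transport it through $\iota^{-1}$ to a bounded functional on $\iota\big(H^{1}(\partial\Omega,\sigma)\big)$ of norm $\le C\|f\|_{{\rm BMO}(\partial\Omega,\sigma)}$, and extend it via the Hahn--Banach Theorem to some $\widetilde{\Lambda}\in Y^{\ast}$ with the same norm control. Identifying $Y^{\ast}=\bigoplus_{j=0}^{n}L^{\infty}(\partial\Omega,\sigma)$ (legitimate since $\sigma$ is a finite measure on the compact set $\partial\Omega$) produces $g_{0},g_{1},\dots,g_{n}\in L^{\infty}(\partial\Omega,\sigma)$ with $\sum_{j=0}^{n}\|g_{j}\|_{L^{\infty}(\partial\Omega,\sigma)}\le C\|f\|_{{\rm BMO}(\partial\Omega,\sigma)}$ and, for every $h\in H^{1}(\partial\Omega,\sigma)$,
\begin{equation*}
\langle f,h\rangle_{{\rm BMO},H^{1}}=\widetilde{\Lambda}(\iota h)=\int_{\partial\Omega}h\,g_{0}\,d\sigma+\sum_{j=1}^{n}\int_{\partial\Omega}(R_{j}h)\,g_{j}\,d\sigma .
\end{equation*}

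Finally, I would transpose the Riesz transforms off $h$: because $\partial\Omega$ is a compact {\rm UR} set, each (unmodified, principal-value) $R_{j}$ is bounded from $L^{\infty}(\partial\Omega,\sigma)$ into ${\rm BMO}(\partial\Omega,\sigma)$ (cf. \cite[(2.3.39), p.\,353]{GHA.III}), and the oddness of its kernel makes the Banach adjoint of $R_{j}\colon H^{1}(\partial\Omega,\sigma)\to L^{1}(\partial\Omega,\sigma)$ equal to $-R_{j}\colon L^{\infty}(\partial\Omega,\sigma)\to{\rm BMO}(\partial\Omega,\sigma)$; equivalently, $\int_{\partial\Omega}(R_{j}h)\,g\,d\sigma=-\langle R_{j}g,h\rangle_{{\rm BMO},H^{1}}$ for all $h\in H^{1}(\partial\Omega,\sigma)$ and $g\in L^{\infty}(\partial\Omega,\sigma)$, which I would justify by first checking it on finite linear combinations of atoms and then passing to the limit using the mapping properties just recalled. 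Inserting this into the displayed identity gives $\langle f,h\rangle=\big\langle g_{0}-\sum_{j=1}^{n}R_{j}g_{j},\,h\big\rangle$ for all $h\in H^{1}(\partial\Omega,\sigma)$, and since $H^{1}(\partial\Omega,\sigma)$ separates the points of ${\rm BMO}(\partial\Omega,\sigma)$ (again by \eqref{KDLE20-Bis}) this forces $f=g_{0}-\sum_{j=1}^{n}R_{j}g_{j}$ in ${\rm BMO}(\partial\Omega,\sigma)$; taking $f_{0}:=g_{0}$ and $f_{j}:=-g_{j}$ for $1\le j\le n$ then yields \eqref{utggG-TRFF.rt.A} with the bound \eqref{utggG-TRFF.rt.B} inherited from the control on the $g_{j}$'s. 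The genuinely substantive ingredient is the $H^{1}$ characterization of Theorem~\ref{yrFCC-TT.aat.TRa}; within the argument above the only step demanding care is the adjoint identity of this last paragraph, the remainder being soft functional analysis.
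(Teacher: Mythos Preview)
Your proposal is correct and follows essentially the same approach as the paper: embed $H^{1}(\partial\Omega,\sigma)$ into $\bigoplus_{j=0}^{n}L^{1}(\partial\Omega,\sigma)$ via $h\mapsto(h,R_{1}h,\dots,R_{n}h)$, invoke the $H^{1}$ characterization \eqref{7y5t6tt} to see this is an isomorphism onto a closed subspace, apply Hahn--Banach to extend the functional $f\in{\rm BMO}=(H^{1})^{\ast}$, and then transpose the Riesz transforms using their anti-Hermitian character. The only cosmetic difference is that the paper verifies the adjoint identity by testing against $h\in L^{2}(\partial\Omega,\sigma)$ (dense in $H^{1}$) rather than against finite linear combinations of atoms as you propose; both routes work.
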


\begin{proof}
Take
\begin{equation}\label{yy6yhh/1}
{\mathscr{V}}:=\underbrace{L^{1}(\partial\Omega,\sigma)\oplus L^{1}(\partial\Omega,\sigma)\oplus\cdots\oplus
L^{1}(\partial\Omega,\sigma)}_{\text{$n+1$ copies}},
\end{equation}
canonically viewed as a Banach space equipped with the norm
\begin{equation}\label{yy6yhh/2b}
\|(g_0,g_1,\dots,g_n)\|_{\mathscr{V}}:=\|g_0\|_{L^{1}(\partial\Omega,\sigma)}
+\sum_{j=1}^n\|g_j\|_{L^{1}(\partial\Omega,\sigma)}.
\end{equation}
Let us also define 
\begin{equation}\label{yy6yhh/2}
{\mathscr{W}}:=\big\{(g_0,g_1,\dots,g_n)\in{\mathscr{V}}:\,g_j=R_jg_0\,\text{ for }\,1\leq j\leq n\big\}.
\end{equation}
Then ${\mathscr{W}}$ is a closed linear subspace of ${\mathscr{V}}$, and \eqref{7y5t6tt} implies that 
the linear mapping 
\begin{equation}\label{yy6yhh/3}
H^{1}(\partial\Omega,\sigma)\ni g_0\longmapsto (g_0,R_1g_0,\dots,R_ng_0)\in{\mathscr{W}}
\end{equation}
is a well-defined Banach space isomorphism of $H^{1}(\partial\Omega,\sigma)$ onto ${\mathscr{W}}$. 

Consider now an arbitrary function $f\in{\rm BMO}(\partial\Omega,\sigma)$. In view of \eqref{KDLE20-Bis}
we may regard $f$ as a continuous linear functional $\Lambda:H^{1}(\partial\Omega,\sigma)\to{\mathbb{C}}$.
Via \eqref{yy6yhh/3}, this may be subsequently identified with a continuous linear functional on 
${\mathscr{W}}$ which, thanks to the Hahn-Banach Theorem, may be further extended to a continuous
linear functional $\widetilde{\Lambda}$ on ${\mathscr{V}}$, i.e.,
\begin{equation}\label{yy6yhh/4}
\widetilde{\Lambda}\in{\mathscr{V}}^\ast=
\underbrace{L^{\infty}(\partial\Omega,\sigma)\oplus L^{\infty}(\partial\Omega,\sigma)
\oplus\cdots\oplus L^{\infty}(\partial\Omega,\sigma)}_{\text{$n+1$ copies}}.
\end{equation}
After unraveling notation, this argument proves that there exist functions
$f_0,f_1,\dots,f_n\in L^{\infty}(\partial\Omega,\sigma)$ with the property that
\begin{equation}\label{yy6yhh/5}
\Lambda(h)=\int_{\partial\Omega}hf_0\,d\sigma-\sum_{j=1}^n\int_{\partial\Omega}(R_jh)f_j\,d\sigma
\,\,\text{ for each }\,\,h\in H^{1}(\partial\Omega,\sigma),
\end{equation}
and
\begin{equation}\label{yy6yhh/6}
\|\Lambda\|_{(H^{1}(\partial\Omega,\sigma))^\ast}\leq\|f_0\|_{L^{\infty}(\partial\Omega,\sigma)}
+\sum_{j=1}^n\|f_j\|_{L^{\infty}(\partial\Omega,\sigma)}.
\end{equation}
In particular, for each $h\in L^{2}(\partial\Omega,\sigma)\subset H^{1}(\partial\Omega,\sigma)$ we may
use the anti-Hermitian character of the Riesz transforms on $\partial\Omega$ (cf. \cite[(2.3.25), p.\,350]{GHA.III}) 
to write 
\begin{align}\label{yy6yhh/7}
\Lambda(h) &=\int_{\partial\Omega}hf_0\,d\sigma+\sum_{j=1}^n\int_{\partial\Omega}h(R_jf_j)\,d\sigma
\nonumber\\[6pt]
&=\int_{\partial\Omega}h\Big\{f_0+\sum_{j=1}^nR_jf_j\Big\}\,d\sigma.
\end{align}
In light of the fact that the space $L^{2}(\partial\Omega,\sigma)$ is a dense subspace of the Hardy space $H^{1}(\partial\Omega,\sigma)$ 
(see, e.g., \cite[Proposition~4.4.4, p.\,164]{GHA.II} in this regard), this ultimately proves that the functionals
$f\equiv\Lambda\in\big(H^{1}(\partial\Omega,\sigma)\big)^\ast$ 
and $f_0+\sum_{j=1}^nR_jf_j\in{\rm BMO}(\partial\Omega,\sigma)=\big(H^{1}(\partial\Omega,\sigma)\big)^\ast$
coincide as functions in the space ${\rm BMO}(\partial\Omega,\sigma)$. 
\end{proof}

We conclude by giving the proof of Theorem~\ref{u6g5rfffc-CCC}:

\vskip 0.08in
\begin{proof}[Proof of Theorem~\ref{u6g5rfffc-CCC}]
The left-to-right inclusion follows from Theorem~\ref{u6g5rfffc}, while the right-to-left 
inclusion is a consequence of \cite[(2.3.39), p.\,353]{GHA.III}.
\end{proof}

\section{Proofs of Main Results: Domains with Unbounded Boundaries}
\setcounter{equation}{0}
\label{S-4}

Compared with our work carried out in \S{S-3}, dealing with domains possessing unbounded boundaries brings in novel difficulties. 
As such, a number of tools have to be sharpened while other results need to be work out from scratch. In the latter category we have
the following geometric lemma, which is designed to help correlate the behavior of a function at infinity in the domain with that on the boundary.  

\begin{lemma}\label{NEW.COL.LEMMA}
Suppose $\Omega\subseteq{\mathbb{R}}^n$ is an open set with the property that $\partial\Omega$ is an unbounded Ahlfors regular set, 
and define $\sigma:={\mathcal{H}}^{n-1}\lfloor\partial\Omega$. Fix a reference point $z\in\partial\Omega$ and for each $R>0$ 
abbreviate 
\begin{equation}\label{eq:uygt.8t5f}
A_R:=\Omega\cap[B(z,2R)\setminus B(z,R)],\quad\Delta(z,R):=B(z,R)\cap\partial\Omega.
\end{equation}

Then there exist $C,M,\widetilde{\kappa}\in(0,\infty)$ which depend only on $n$ and the Ahlfors regularity constants of $\partial\Omega$ 
with the property that for each Lebesgue measurable function $u:\Omega\to{\mathbb{R}}$ and each $R\in(0,\infty)$ one has
\begin{equation}\label{NEW.COL.LEM}
\int_{A_R}|u|\,d{\mathcal{L}}^n\leq CR\cdot\int_{\Delta(z,MR)\setminus\Delta(z,R/2)}{\mathcal{N}}_{\widetilde{\kappa}}^{2R}u\,d\sigma
\end{equation} 
\end{lemma}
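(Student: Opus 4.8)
\textbf{Proof proposal for Lemma~\ref{NEW.COL.LEMMA}.}
The plan is to cover the annular region $A_R=\Omega\cap[B(z,2R)\setminus B(z,R)]$ by a bounded-overlap family of balls whose radii are comparable to the distance to $\partial\Omega$, and on each such ball estimate the local $L^1$-average of $u$ by the $L^\infty$-norm over a nontangential region anchored at a nearby boundary point. Concretely, for each $x\in A_R$ set $r_x:=\tfrac{1}{4}\,{\rm dist}(x,\partial\Omega)$ if ${\rm dist}(x,\partial\Omega)\le 2R$, and note that in the complementary ``deep interior'' case ${\rm dist}(x,\partial\Omega)>2R$ the point $x$ is far from the boundary; I will handle the deep-interior portion separately (it has Lebesgue measure $\lesssim R^n$ and, by the Ahlfors lower bound on $\partial\Omega$, one can still dominate $|u(x)|$ by ${\mathcal{N}}_{\widetilde\kappa}^{2R}u$ at boundary points of $\Delta(z,MR)\setminus\Delta(z,R/2)$ for a suitably large aperture $\widetilde\kappa$ and large $M$). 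For the bulk near the boundary, a Vitali/Besicovitch selection gives countably many points $x_i\in A_R$ with $B(x_i,r_{x_i})$ having bounded overlap and $\bigcup_i B(x_i,5r_{x_i})\supseteq\{x\in A_R:{\rm dist}(x,\partial\Omega)\le 2R\}$.

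The second step is the pointwise-to-nontangential passage on each ball. Pick $\xi_i\in\partial\Omega$ with $|x_i-\xi_i|={\rm dist}(x_i,\partial\Omega)=4r_{x_i}$. A direct check of the defining inequality in \eqref{NT-FF1} shows that for a fixed aperture $\widetilde\kappa$ (any $\widetilde\kappa\ge 8$ works, but I will keep the constant explicit), every $y\in B(x_i,5r_{x_i})\cap\Omega$ satisfies $|y-\xi_i|<(1+\widetilde\kappa)\,{\rm dist}(y,\partial\Omega)$, i.e. $B(x_i,5r_{x_i})\cap\Omega\subseteq\Gamma_{\widetilde\kappa}(\xi_i)$; moreover, since $x_i\in A_R$ and $r_{x_i}\le R/2$, the whole ball $B(x_i,5r_{x_i})$ lies inside ${\mathcal{O}}_{2R}$ (it is within distance $\le 5r_{x_i}+4r_{x_i}\le 2R$ of $\partial\Omega$, after shrinking the constant in $r_x$ if needed), so in fact $B(x_i,5r_{x_i})\cap\Omega\subseteq\Gamma_{\widetilde\kappa}(\xi_i)\cap{\mathcal{O}}_{2R}$. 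Hence $\|u\|_{L^\infty(B(x_i,5r_{x_i})\cap\Omega)}\le({\mathcal{N}}_{\widetilde\kappa}^{2R}u)(\xi_i)$, and therefore
\begin{equation}\label{eq:local-bound-plan}
\int_{B(x_i,5r_{x_i})\cap\Omega}|u|\,d{\mathcal{L}}^n
\le C\,r_{x_i}^{\,n}\,({\mathcal{N}}_{\widetilde\kappa}^{2R}u)(\xi_i).
\end{equation}
Also observe $\xi_i\in\partial\Omega$ obeys $R/2\le|\xi_i-z|\le MR$ for an absolute constant $M$ (since $|\xi_i-x_i|\le R/2$ and $R\le|x_i-z|\le 2R$), so $\xi_i\in\Delta(z,MR)\setminus\Delta(z,R/2)$.

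The third step converts the sum $\sum_i r_{x_i}^{\,n}({\mathcal{N}}_{\widetilde\kappa}^{2R}u)(\xi_i)$ into the surface integral on the right of \eqref{NEW.COL.LEM}. The idea is that $r_{x_i}^{\,n}\le C\,r_{x_i}\cdot r_{x_i}^{\,n-1}\le CR\cdot\sigma\big(\Delta(\xi_i,r_{x_i})\big)$ by the Ahlfors \emph{lower} regularity bound \eqref{aamsnfsoidq.U} (using $r_{x_i}\le R$), while the surface balls $\Delta(\xi_i,r_{x_i})$ have bounded overlap in $\partial\Omega$: this follows because the solid balls $B(x_i,r_{x_i})$ do, the map $x_i\mapsto\xi_i$ moves points by a controlled amount relative to $r_{x_i}$, and the Ahlfors \emph{upper} bound then limits how many such surface balls can meet a given point. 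Combining this with the fact that ${\mathcal{N}}_{\widetilde\kappa}^{2R}u$ is essentially constant (up to enlarging the aperture once more, absorbing the shift by $r_{x_i}$ into the cone — a standard change-of-aperture manoeuvre as in \cite[Proposition~8.6.10]{GHA.I}) on each $\Delta(\xi_i,r_{x_i})$, one gets
\begin{equation}\label{eq:sum-to-integral-plan}
\sum_i r_{x_i}^{\,n}\,({\mathcal{N}}_{\widetilde\kappa}^{2R}u)(\xi_i)
\le CR\sum_i\int_{\Delta(\xi_i,r_{x_i})}{\mathcal{N}}_{\widetilde\kappa}^{2R}u\,d\sigma
\le CR\int_{\Delta(z,MR)\setminus\Delta(z,R/2)}{\mathcal{N}}_{\widetilde\kappa}^{2R}u\,d\sigma,
\end{equation}
which together with \eqref{eq:local-bound-plan} and the deep-interior estimate yields \eqref{NEW.COL.LEM}. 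The main obstacle I anticipate is the bounded-overlap bookkeeping in the third step: making precise that the surface balls $\Delta(\xi_i,r_{x_i})$ inherit finite overlap from the solid balls $B(x_i,r_{x_i})$ requires carefully tracking how the nearest-point projection $x_i\mapsto\xi_i$ distorts distances, and controlling the various aperture enlargements so that a single $\widetilde\kappa$ (depending only on $n$ and the Ahlfors constants) works throughout. The rest is routine covering-lemma technology plus the elementary cone geometry already implicit in \eqref{NT-FF1}.
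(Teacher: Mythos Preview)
Your Whitney-cover approach is genuinely different from the paper's, and in its present form it has a real gap in the third step. The claim that the surface balls $\Delta(\xi_i,r_{x_i})$ have bounded overlap is \emph{false}: in the upper half-space with $z=0$, the points $x_k=(R,2^{-k})$ for $k=0,1,\dots,K$ all lie in $A_R$, the Whitney balls $B(x_k,r_{x_k})$ with $r_{x_k}=2^{-k-2}$ are pairwise disjoint, yet every $\xi_k=(R,0)$ and every $\Delta(\xi_k,r_{x_k})$ contains $(R,0)$, so the overlap at that point is $K+1$. Bounded overlap of the solid Whitney balls simply does not transfer to their boundary projections, because balls at different dyadic scales can stack over the same boundary point. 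Your argument can be repaired, but only if you keep the factor $r_{x_i}$ rather than replacing it by $R$: one should show instead that $\sum_i r_{x_i}\,\mathbf{1}_{\Delta(\xi_i,\,c\,r_{x_i})}\le CR$ pointwise on $\partial\Omega$ (a Carleson-type estimate, obtained by noting that at each dyadic scale only boundedly many $i$ contribute and summing the geometric series). That is a different argument from the one you wrote.

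There are also smaller issues. With $r_x=\tfrac14\mathrm{dist}(x,\partial\Omega)$ the $5$-dilated Vitali balls $B(x_i,5r_{x_i})$ can protrude to points $y$ with $\mathrm{dist}(y,\partial\Omega)$ arbitrarily small, so no finite aperture $\widetilde\kappa$ gives $B(x_i,5r_{x_i})\cap\Omega\subseteq\Gamma_{\widetilde\kappa}(\xi_i)$; you would need $r_x\le\tfrac{1}{10}\mathrm{dist}(x,\partial\Omega)$ or to use Besicovitch without dilation. And the assertion $|\xi_i-x_i|\le R/2$ is unjustified: $|\xi_i-x_i|=\mathrm{dist}(x_i,\partial\Omega)$ can be as large as $2R$, in which case $\xi_i$ may land inside $\Delta(z,R/2)$, so the lower annular restriction on the surface integral is not automatic from your construction.

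For comparison, the paper avoids Whitney covers altogether. It splits $A_R$ at the single fixed scale $R/4$: the piece $A_R\setminus\mathcal{O}_{R/4}$ is shown to lie in $\Gamma_{\widetilde\kappa}(x)\cap B(z,2R)$ for \emph{every} $x$ in the surface annulus (so one integrates over both variables and divides by $\sigma$ of the annulus, using Ahlfors regularity to get the factor $R$), while for the collar piece $A_R\cap\mathcal{O}_{R/4}$ it simply invokes Proposition~\ref{kiGa-615} applied to $u\cdot\mathbf{1}_{A_R\cap\mathcal{O}_{R/4}}$, together with the elementary observation that cones from outside $\Delta(z,MR)\setminus\Delta(z,R/2)$ miss this piece entirely. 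This two-line split is both shorter and sidesteps all the bookkeeping your covering argument requires.
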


\begin{proof}
With $C_{\partial\Omega}$ and $c_{\partial\Omega}$ denoting, respectively, 
the upper and lower Ahlfors regularity constants of $\partial\Omega$, fix
\begin{equation}\label{eq:PR.1}
M>\max\Big\{4\,,\,\frac{1}{2}\Big(\frac{C_{\partial\Omega}}{c_{\partial\Omega}}\Big)^{\frac{1}{n-1}}\Big\}\,\,
\text{ and }\,\,\widetilde{\kappa}>4(M+2).
\end{equation} 
Also, select some aperture parameter $\kappa\in(0,1)$, and pick an arbitrary $R>0$. 
Elementary geometric considerations then show that 
\begin{equation}\label{eq:PR.2}
\Gamma_{\kappa}(x)\cap\big[A_R\cap{\mathcal{O}}_{R/4}\big]=\varnothing\,\,\text{ for all }\,\,
x\in\partial\Omega\setminus\big[\Delta(z,MR)\setminus\Delta(z,R/2)\big],
\end{equation}
and that
\begin{equation}\label{eq:PR.3}
A_R\setminus{\mathcal{O}}_{R/4}\subseteq\Gamma_{\widetilde{\kappa}}(x)\cap B(z,2R)\,\,
\text{ for all }\,\,x\in\Delta(z,MR)\setminus\Delta(z,R/2).
\end{equation}

Suppose a Lebesgue measurable function $u:\Omega\to{\mathbb{R}}$ has been given. Then, by \eqref{eq:PR.3},
\begin{equation}\label{eq:PR.4}
\begin{array}{c}
|u(y)|\leq\big({\mathcal{N}}_{\widetilde{\kappa}}^{2R}u\big)(x)\,\text{ for all }\,y\in A_R\setminus{\mathcal{O}}_{R/4},
\\[4pt]
\text{and all }\,\,x\in\Delta(z,MR)\setminus\Delta(z,R/2).
\end{array}
\end{equation}
Integrating in the variable $y\in A_R\setminus{\mathcal{O}}_{R/4}$ with respect to ${\mathcal{L}}^n$ and in 
the variable $x\in\Delta(z,MR)\setminus\Delta(z,R/2)$ with respect to 
$\sigma$ therefore yields
\begin{align}\label{eq:PR.5}
\sigma\Big(\Delta(z,MR) & \setminus\Delta(z,R/2)\Big)\cdot\int_{A_R\setminus{\mathcal{O}}_{R/4}}|u|\,d{\mathcal{L}}^n
\nonumber\\[4pt]
&\leq{\mathcal{L}}^n\Big(A_R\setminus{\mathcal{O}}_{R/4}\Big)\cdot\int_{\Delta(z,MR)\setminus\Delta(z,R/2)}
{\mathcal{N}}_{\widetilde{\kappa}}^{2R}u\,d\sigma.
\end{align}
Note that, by Ahlfors regularity,
\begin{align}\label{eq:PR.6}
\sigma\Big(\Delta(z,MR)\setminus\Delta(z,R/2)\Big)
&\geq\sigma\Big(\Delta(z,MR)\Big)-\sigma\Big(\Delta(z,R/2)\Big)
\nonumber\\[4pt]
&\geq c_{\partial\Omega}(MR)^{n-1}-C_{\partial\Omega}(R/2)^{n-1}
\nonumber\\[4pt]
&=R^{n-1}\Big\{c_{\partial\Omega}M^{n-1}-C_{\partial\Omega}(1/2)^{n-1}\Big\}
\end{align}
and that $c_{\partial\Omega}M^{n-1}-C_{\partial\Omega}(1/2)^{n-1}>0$ thanks to our choice of $M$ in \eqref{eq:PR.1}. Since we also have
\begin{equation}\label{eq:PR.7}
{\mathcal{L}}^n\Big(A_R\setminus{\mathcal{O}}_{R/4}\Big)\leq{\mathcal{L}}^n\Big(B(z,2R)\Big)=CR^n,
\end{equation}
we conclude from \eqref{eq:PR.5}-\eqref{eq:PR.7} that
\begin{equation}\label{eq:PR.8}
\int_{A_R\setminus{\mathcal{O}}_{R/4}}|u|\,d{\mathcal{L}}^n
\leq CR\cdot\int_{\Delta(z,MR)\setminus\Delta(z,R/2)}{\mathcal{N}}_{\widetilde{\kappa}}^{2R}u\,d\sigma.
\end{equation} 
In addition, we have
\begin{align}\label{eq:PR.9}
\int_{A_R\cap{\mathcal{O}}_{R/4}}|u|\,d{\mathcal{L}}^n&=\int_{{\mathcal{O}}_{2R}}
\Big|u\cdot{\mathbf{1}}_{A_R\cap{\mathcal{O}}_{R/4}}\Big|\,d{\mathcal{L}}^n
\nonumber\\[4pt]
&\leq CR\cdot\int_{\partial\Omega}{\mathcal{N}}_{\kappa}\Big(u\cdot{\mathbf{1}}_{A_R\cap{\mathcal{O}}_{R/4}}\Big)\,d\sigma
\nonumber\\[4pt]
&=CR\cdot\int_{\Delta(z,MR)\setminus\Delta(z,R/2)}{\mathcal{N}}_{\kappa}
\Big(u\cdot{\mathbf{1}}_{A_R\cap{\mathcal{O}}_{R/4}}\Big)\,d\sigma
\nonumber\\[4pt]
&\leq CR\cdot\int_{\Delta(z,MR)\setminus\Delta(z,R/2)}{\mathcal{N}}^{2R}_{\kappa}u\,d\sigma,
\end{align}
where the first inequality follows from Proposition~\ref{kiGa-615} and the second equality is a consequence of \eqref{eq:PR.2}.
At this stage, the estimate claimed in \eqref{NEW.COL.LEM} is obtained by combining \eqref{eq:PR.8} with \eqref{eq:PR.9}, 
bearing in mind that $\widetilde{\kappa}>\kappa$. 
\end{proof}

Next, we formally present the class of domains that are most relevant to this section.
In line with \cite[Definition~3.1.8, p.\,121]{GHA.V} and \cite[Definition~2.15, p.\,85]{M5}, we make the following definition. 

\begin{definition}\label{def:USKT}
Fix $n\in{\mathbb{N}}$ with $n\geq 2$ and consider a parameter $\delta>0$. Call a nonempty, proper subset $\Omega$ 
of ${\mathbb{R}}^n$ a $\delta$-{\tt flat} {\tt Ahlfors} {\tt regular} {\tt domain} {\rm (}or $\delta$-{\tt flat} 
{\tt AR} {\tt domain}, or simply $\delta$-{\tt AR} {\tt domain}{\rm )} provided $\Omega$ is an 
Ahlfors regular domain {\rm (}in the sense of Definition~\ref{def:AR}{\rm )} whose geometric measure 
theoretic outward unit normal $\nu$ satisfies {\rm (}with $\sigma:={\mathcal{H}}^{n-1}\lfloor\partial\Omega${\rm )}
\begin{equation}\label{trfca-hj7i}
\|\nu\|_{[{\rm BMO}(\partial\Omega,\sigma)]^n}<\delta.
\end{equation}
\end{definition}

In particular, whenever $\delta\in(0,1)$, work in \cite[Lemma~2.8, p.\,52]{M5} guarantees that $\partial\Omega$ is an unbounded set.
The next order of business is to adapt and further sharpen Proposition~\ref{yrFCC} to the class of $\delta$-{\rm AR} 
domains with $\delta\in(0,1)$ small. 
One improvement, namely the ability to ask for nontangential maximal function control only in {\it weak} Lebesgue spaces, is dictated by how we shall 
employ this later, in the proof of Proposition~\ref{yrFCC-TT.UNB}, where we can no longer embed weak Lebesgue spaces into genuine global Lebesgue spaces
given that we presently work on non-compact boundaries. 

\begin{proposition}\label{yrFCC.UND}
Fix a dimension $n\in{\mathbb{N}}$ with $n\geq 2$ along with an integrability exponent $p\in(1,\infty)$.
Let $\Omega$ be a $\delta$-{\rm AR} domain in $\mathbb{R}^{n}$ {\rm (}cf. Definition~\ref{def:USKT}{\rm )} 
where $\delta\in(0,1)$ is sufficiently small {\rm (}relative to $n$, $p$, and the Ahlfors regularity constants of $\partial\Omega${\rm )}.
Abbreviate $\sigma:={\mathcal{H}}^{\,n-1}\lfloor\partial\Omega$, and fix some aperture parameter $\kappa>0$. 
Consider a harmonic function $u$ in $\Omega$ satisfying ${\mathcal{N}}_{\kappa}u\in L^{p,\infty}(\partial\Omega,\sigma)$, and consider
a continuous subharmonic function $w$ in $\Omega$ with the property that ${\mathcal{N}}_{\kappa}w\in L^{p,\infty}(\partial\Omega,\sigma)$ and
\begin{equation}\label{ih6g6gfr5.UND}
\begin{array}{c}
\Big(w\big|^{{}^{\kappa-{\rm n.t.}}}_{\partial\Omega}\Big)(x)\,\,\text{ and }\,\,
\Big(u\big|^{{}^{\kappa-{\rm n.t.}}}_{\partial\Omega}\Big)(x)\,\,\text{ exist}
\\[2pt] 
\text{and are equal at $\sigma$-a.e. point }\,\,x\in\partial\Omega.
\end{array}
\end{equation} 

Then $w\leq u$ in $\Omega$.
\end{proposition}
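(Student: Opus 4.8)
The plan is to follow the architecture of the proof of Proposition~\ref{yrFCC} very closely, but replace the two places where boundedness or compactness of $\partial\Omega$ were exploited: the construction of the Green function, and the passage to the boundary integrals via global H\"older duality. First I would note that, since $\Omega$ is a $\delta$-{\rm AR} domain with $\delta$ small, it is an unbounded {\rm UR} domain for which the $L^{q}$-Dirichlet problem for the Laplacian is solvable for every $q\in(1,\infty)$ (by \cite[Chapter~8]{GHA.V}), and the harmonic single layer operator enjoys the full Calder\'on-Zygmund calculus. Fix an arbitrary pole $x_0\in\Omega$; following the recipe in \eqref{eq:SSha.3}--\eqref{eq:SSha.4} (only the $n\geq 3$ case is genuinely relevant here since $n=2$ forces $\partial\Omega$ bounded under $\delta<1$), construct $G_\Omega(\cdot,x_0)=-E_\Delta(\cdot-x_0)+E_\Delta(\cdot-x_\ast)-{\mathscr S}f$ with $f\in L^{p'}(\partial\Omega,\sigma)$ solving $Sf=-E_\Delta(\cdot-x_0)+E_\Delta(\cdot-x_\ast)$ on $\partial\Omega$; the solvability of this equation in the $\delta$-{\rm AR} setting is again furnished by the results of \cite{GHA.V}, and the decay $E_\Delta(\cdot-x_0)-E_\Delta(\cdot-x_\ast)=O(|x|^{-n})$ together with the mapping properties of ${\mathscr S}$ gives $G_\Omega(x,x_0)=O(|x|^{2-n})$ and $(\nabla_xG)(x,x_0)=O(|x|^{1-n})$ as $|x|\to\infty$, while uniqueness in the $L^{p'}$-Dirichlet problem pins it down and yields positivity $G_\Omega(\cdot,x_0)>0$ on $\Omega\setminus\{x_0\}$ as in \eqref{eq:JHGgws.7trF}. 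I would also record the pointwise domination \eqref{Int-Es-G.1-yy.2ecb.CHI} of ${\mathcal N}^\varepsilon_\kappa(G_\Omega(\cdot,x_0))$ by $\varepsilon\cdot{\mathcal N}^{\lambda\varepsilon}_{\widetilde\kappa}(\nabla G_\Omega(\cdot,x_0))$, which holds in any {\rm UR} domain.

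Next I would mollify $w$ to produce $w_j\in{\mathscr C}^\infty(\Omega_j)$ subharmonic with $\Delta w_j\geq 0$ and $w_j\to w$ locally uniformly, set $\eta:=w-u$, and — because $\partial\Omega$ is unbounded — apply the divergence theorem \eqref{BDS-Bvx-M+D} not to $\Phi_\varepsilon$ alone but to the product $\Phi_\varepsilon\cdot\Psi_R$ with $\Psi_R(x)=\Psi(x/R)$, exactly as in the exterior-domain portion of the proof of Proposition~\ref{yrFCC}. This produces $\eta(x_0)\leq{\rm I}_{\varepsilon,R}+{\rm II}_{\varepsilon,R}$ where the $\nabla\Phi_\varepsilon$, $\Delta\Phi_\varepsilon$ contributions are supported in the collar ${\mathcal O}_\varepsilon$ and the $\nabla\Psi_R$, $\Delta\Psi_R$ contributions are supported in the annular region $A_R=\Omega\cap[B(0,2R)\setminus B(0,R)]$. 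The collar terms are handled verbatim: by \eqref{VXT-12}--\eqref{VXT-13}, Proposition~\ref{kiGa-615} with $p:=1$, \eqref{Int-Es-G.1-yy.2ecb.CHI}, and H\"older's inequality, $|{\rm I}_\varepsilon|,|{\rm II}_\varepsilon|\lesssim\|{\mathcal N}^{\lambda\varepsilon}_{\kappa}(\nabla G_\Omega(\cdot,x_0))\|_{L^{p'}(\partial\Omega,\sigma)}\|{\mathcal N}^\varepsilon_\kappa\eta\|_{L^p(\partial\Omega,\sigma)}$; the first factor is $O(1)$ from \eqref{Dir-Lap5}, and since $\eta|^{\kappa-{\rm n.t.}}_{\partial\Omega}=0$ $\sigma$-a.e., Proposition~\ref{nont-ind-11-PP} (specifically \eqref{nkc-CC-PP}) gives $\|{\mathcal N}^\varepsilon_\kappa\eta\|_{L^p(\partial\Omega,\sigma)}=o(1)$. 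Here one must take a small initial detour to justify that ${\mathcal N}_\kappa\eta\in L^{p,\infty}(\partial\Omega,\sigma)$ implies ${\mathcal N}^\delta_\kappa\eta\in L^p(\partial\Omega,\sigma)$ for some $\delta>0$: this is where the weak-$L^p$ hypothesis is used carefully. In fact the key point is that the nontangential trace of $\eta$ vanishes $\sigma$-a.e., so $\|{\mathcal N}^\varepsilon_\kappa\eta\|_{L^p(\partial\Omega,\sigma)}\to 0$ by dominated convergence once we know ${\mathcal N}^{\delta_0}_\kappa\eta\in L^p(\partial\Omega,\sigma)$ for one fixed small $\delta_0$, and on the compact collar ${\mathcal O}_{\delta_0}$ — wait, the collar need not be compact — so instead one argues on each $\Delta(z,2R)$-localized piece and uses that ${\mathcal N}^{\delta_0}_\kappa\eta\le{\mathcal N}_\kappa\eta\in L^{p,\infty}$ together with the fact that ${\mathcal N}^\varepsilon_\kappa\eta\downarrow 0$ pointwise $\sigma$-a.e.; since the truncated maximal functions are dominated by the fixed $L^{p,\infty}$ function and the measure-level sets shrink, a Lorentz-space dominated convergence argument gives convergence in $L^p_{\rm loc}$, which is upgraded to global $L^p$ using the annular bookkeeping below.

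The genuinely new ingredient is controlling the annular terms ${\rm I}^{\Psi}_{\varepsilon,R}$, ${\rm II}^{\Psi}_{\varepsilon,R}$, and this is where Lemma~\ref{NEW.COL.LEMMA} earns its keep. Using $|\nabla\Psi_R|\lesssim R^{-1}$, $|\Delta\Psi_R|\lesssim R^{-2}$, together with the decay $|\nabla_yG_\Omega(y,x_0)|\lesssim|y|^{1-n}\lesssim R^{1-n}$ and $|G_\Omega(y,x_0)|\lesssim R^{2-n}$ on $A_R$, I would bound the annular contribution to ${\rm I}_{\varepsilon,R}$ by $CR^{-n}\int_{A_R}|\eta|\,d{\mathcal L}^n$ and the annular contribution to ${\rm II}_{\varepsilon,R}$ by the same quantity; then Lemma~\ref{NEW.COL.LEMMA} (applied with $u:=\eta$) gives $R^{-n}\int_{A_R}|\eta|\,d{\mathcal L}^n\lesssim R^{1-n}\int_{\Delta(z,MR)\setminus\Delta(z,R/2)}{\mathcal N}^{2R}_{\widetilde\kappa}\eta\,d\sigma$. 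The right side is then estimated by H\"older's inequality in the Lorentz pairing $L^{p,\infty}$--$L^{p',1}$: writing it as $R^{1-n}\big\|{\mathcal N}_{\widetilde\kappa}\eta\big\|_{L^{p,\infty}}\big\|{\mathbf 1}_{\Delta(z,MR)\setminus\Delta(z,R/2)}\big\|_{L^{p',1}}$ and using Ahlfors regularity, $\|{\mathbf 1}_{\Delta(z,MR)\setminus\Delta(z,R/2)}\|_{L^{p',1}(\partial\Omega,\sigma)}\approx\sigma(\Delta(z,MR))^{1/p'}\approx R^{(n-1)/p'}$, so the whole annular bound is $\lesssim R^{1-n+(n-1)/p'}=R^{-(n-1)/p}\to 0$ as $R\to\infty$ since $n\ge 2$ and $p<\infty$ make $(n-1)/p>0$. (Here it is crucial that ${\mathcal N}_{\widetilde\kappa}\eta\in L^{p,\infty}$ — which follows from the hypotheses once we pass between apertures via \cite[Proposition~8.8.6]{GHA.I} — and that we did not need $\eta\in L^p$ globally.) Thus $\lim_{\varepsilon\to 0^+}\lim_{R\to\infty}({\rm I}_{\varepsilon,R}+{\rm II}_{\varepsilon,R})=0$, whence $\eta(x_0)\le 0$, i.e. $w(x_0)\le u(x_0)$; since $x_0\in\Omega$ was arbitrary, $w\le u$ on $\Omega$. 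I expect the main obstacle to be the passage from the weak-type nontangential hypothesis to the $o(1)$ decay of the collar integrals — one has no compactness to fall back on, so the Lorentz-space dominated convergence / annular summation argument for $\|{\mathcal N}^\varepsilon_\kappa\eta\|_{L^p}\to 0$ (and the parallel handling of the annular terms) must be organized carefully, and verifying that the role of $\delta$ being small enters only through the solvability/uniqueness of the $L^{p'}$-Dirichlet problem and the construction of a positive Green function with the stated decay.
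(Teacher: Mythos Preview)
Your overall architecture is right, but there are two genuine gaps, located precisely where you flag the new difficulties.

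\textbf{The annular terms.} Your pointwise bound $|\nabla_yG_\Omega(y,x_0)|\lesssim R^{1-n}$ on $A_R$ is false. Because $\partial\Omega$ is unbounded, the region $A_R=\Omega\cap[B(z,2R)\setminus B(z,R)]$ contains points arbitrarily close to $\partial\Omega$, where $\nabla G$ admits no pointwise control---only nontangential-maximal control. This is exactly what distinguishes the present setting from the exterior-domain case of Proposition~\ref{yrFCC}, where compactness of $\partial\Omega$ places $A_R$ far from the boundary for large $R$. The paper's remedy is to apply Lemma~\ref{NEW.COL.LEMMA} to the \emph{product} $|\nabla G|\cdot|\eta|$, yielding $R^{-1}\int_{A_R}|\nabla G||\eta|\,d{\mathcal{L}}^n\lesssim\int_{\Delta(z,MR)\setminus\Delta(z,R/2)}{\mathcal{N}}^{\Omega\setminus K}_{\widetilde\kappa}(\nabla G)\cdot{\mathcal{N}}_{\widetilde\kappa}\eta\,d\sigma$, which tends to zero by dominated convergence once the integrand is shown to lie in $L^1(\partial\Omega,\sigma)$ globally.

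\textbf{The collar terms and the weak-$L^p$ hypothesis.} You correctly see that $\|{\mathcal{N}}^\varepsilon_\kappa\eta\|_{L^p}\to 0$ is unavailable from ${\mathcal{N}}_\kappa\eta\in L^{p,\infty}$ alone, but your proposed workarounds do not close the gap. The paper's resolution is to upgrade the Green-function construction: rather than inverting $S$ on $L^{p'}$, one solves the \emph{regularity} boundary value problem for the Laplacian (via \cite[Remark~6.2 and Theorem~6.5]{M5}) so as to obtain ${\mathcal{N}}_\kappa\big(\nabla G_\Omega(\cdot,x_0)\big)\in L^{p',1}(\partial\Omega,\sigma)$, the Lorentz pre-dual of $L^{p,\infty}$. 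Then O'Neil's inequality $L^{p',1}\cdot L^{p,\infty}\subseteq L^1$ places ${\mathcal{N}}^{\Omega\setminus K}_{\widetilde\kappa}(\nabla G)\cdot{\mathcal{N}}_{\widetilde\kappa}\eta$ in $L^1(\partial\Omega,\sigma)$, and since ${\mathcal{N}}^\varepsilon_\kappa\eta\to 0$ pointwise $\sigma$-a.e.\ (from $\eta\big|^{{}^{\kappa-{\rm n.t.}}}_{\partial\Omega}=0$), Lebesgue's Dominated Convergence Theorem handles both the collar and the annular integrals---no $L^p$-norm convergence of ${\mathcal{N}}^\varepsilon_\kappa\eta$ is ever needed. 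Two smaller points: your claim that ``$n=2$ forces $\partial\Omega$ bounded'' is backwards (for $\delta<1$ the boundary is always unbounded), and you omit the cross term $\langle\nabla\Phi_\varepsilon,\nabla\Psi_R\rangle$ in $\Delta(\Phi_\varepsilon\Psi_R)$, which no longer vanishes here and must be estimated separately.
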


\begin{proof}
Let $p'\in(1,\infty)$ be such that $1/p+1/p'=1$, and associate with it the Lorentz space $L^{p',1}(\partial\Omega,\sigma)$.
Also, fix an integrability exponent $q\in(n-1,\infty)$ and define $\alpha:=1-\frac{n-1}{q}\in(0,1)$. Given any set $E\subseteq{\mathbb{R}}^n$, 
denote by $\dot{\mathscr{C}}^\alpha(E)$ the (homogeneous) space of H\"older continuous functions with exponent $\alpha$ in $E$.
Next, fix an arbitrary point $x_0\in\Omega$, and bring in the following compact neighborhood of $x_0$ 
\begin{equation}\label{eq:POS.1}
K:=\overline{B\big(x_0,\,\tfrac{1}{2}\cdot{\rm dist}\,(x_0,\partial\Omega)\big)}\subseteq\Omega.
\end{equation}
The goal is to construct a Green function $G_{\Omega}(\cdot,x_0)$ for the Laplacian in $\Omega$ with pole at $x_0$ satisfying
\begin{align}\label{Dir-Lap4.UNB}
& G_{\Omega}(\cdot,x_0)\in{\mathscr{C}}^{\infty}(\Omega\setminus\{x_0\}),\qquad
\Delta G_{\Omega}(\cdot,x_0)=-\delta_{x_0}\,\,\text{ in }\,\,{\mathcal{D}}'(\Omega),
\\[4pt]
& {\mathcal{N}}^{\Omega\setminus K}_{\kappa}\big(\nabla G_{\Omega}(\cdot,x_0)\big),\,
{\mathcal{N}}^{\Omega\setminus K}_{\kappa}\big(G_{\Omega}(\cdot,x_0)\big)\in L^{p',1}(\partial\Omega,\sigma)\subseteq L^{p'}(\partial\Omega,\sigma),
\label{Dir-Lap5.UNB}
\end{align}
as well as
\begin{equation}\label{eq:POS.4}
G_{\Omega}(\cdot,x_0)\in\dot{\mathscr{C}}^\alpha(\overline{\Omega}\setminus K)\,\,\text{ and }\,\,G_{\Omega}(\cdot,x_0)\big|_{\partial\Omega}=0,
\end{equation}
plus the positivity property
\begin{equation}\label{eq:POS.11}
G_{\Omega}(\cdot,x_0)>0\,\,\text{ in }\,\,\Omega\setminus\{x_0\}.
\end{equation}

The idea is to fix $x_\ast\in{\mathbb{R}}^n\setminus\overline{\Omega}$ and, with $E_\Delta$ as in \eqref{j7ggGYg.EEE}, take
\begin{equation}\label{eq:POS.2}
G_{\Omega}(\cdot,x_0):=-E_{\Delta}(\cdot-x_0)+E_{\Delta}(\cdot-x_\ast)-w\,\,\text{ in }\,\,\Omega,
\end{equation}
where $w$ solves the regularity boundary value problem
\begin{equation}\label{eq:POS.3}
\left\{
\begin{array}{l}
w\in{\mathscr{C}}^\infty(\Omega),\quad\Delta w=0\,\,\text{ in }\,\,\Omega,
\\[4pt]
{\mathcal{N}}_\kappa w,\,{\mathcal{N}}_\kappa(\nabla w)\in L^{p',1}(\partial\Omega,\sigma)\cap L^q(\partial\Omega,\sigma),
\\[4pt]
w\big|^{{}^{\kappa-{\rm n.t.}}}_{\partial\Omega}=\big[-E_{\Delta}(\cdot-x_0)+E_{\Delta}(\cdot-x_\ast)\big]\Big|_{\partial\Omega}.
\end{array}
\right.
\end{equation}
Since $\Omega$ is a $\delta$-{\rm AR} domain with $\delta\in(0,1)$ sufficiently small (relative to $n$, $p$, 
and the Ahlfors regularity constants of $\partial\Omega$), and since the boundary datum is sufficiently regular 
(specifically, it simultaneously belongs to the Lorentz-based Sobolev space of order one
$L^{p',1}_1(\partial\Omega,\sigma)$ and well as the Lebesgue-based Sobolev space of order one $L^q_1(\partial\Omega,\sigma)$, 
as defined in \cite[\S11]{GHA.II}), 
from \cite[Remark~6.2, p.\,381]{M5} (cf. also \cite[Theorem~6.5, pp.\,379-381]{M5} and \cite[Theorem~8.4.1, pp.\,802-804]{GHA.V}) 
we deduce that such a solution exists. Thanks to this and \cite[Lemma~8.3.7, pp.\,693-694]{GHA.I} we therefore have the 
memberships required in 
\eqref{Dir-Lap5.UNB}. That the properties stipulated in \eqref{Dir-Lap4.UNB} are true is clear from \eqref{eq:POS.2}. 

Moreover, according to \cite[Corollary~8.6.8, p.\,743]{GHA.I}, the fact that 
${\mathcal{N}}_\kappa(\nabla w)$ belongs to $L^q(\partial\Omega,\sigma)$ guarantees that $w$ extends to a function in 
$\dot{\mathscr{C}}^\alpha(\overline{\Omega})$. Since $-E_{\Delta}(\cdot-x_0)+E_{\Delta}(\cdot-x_\ast)$ 
is also H\"older continuous of order $\alpha$ in ${\mathbb{R}}^n$ away from $x_0$ and $x_\ast$, we further conclude from \eqref{eq:POS.2}-\eqref{eq:POS.3} 
that the properties demanded in \eqref{eq:POS.4} hold. As a consequence, there exists a constant $C\in(0,\infty)$ such that  
\begin{equation}\label{eq:POS.5}
\sup_{x\in\partial{\mathcal{O}}_\varepsilon}|G_{\Omega}(x,x_0)|\leq C\varepsilon^\alpha\,\,\text{ for each }\,\,
\varepsilon\in\big(0,\,{\rm dist}\,(x_0,\partial\Omega)\big).
\end{equation}
In particular,
\begin{equation}\label{eq:POS.5bis}
\lim_{\varepsilon\to 0^{+}}\sup_{\substack{x\in\Omega\setminus K\,\,\text{ with}\\
{\rm dist}(x,\partial\Omega)=\varepsilon}}|G_{\Omega}(x,x_0)|=0
\end{equation}

To proceed, fix an arbitrary threshold $\varepsilon>0$ and observe that for each $x\in\Omega\setminus K$ 
with ${\rm dist}(x,\partial\Omega)\geq\varepsilon$ we have 
\begin{equation}\label{eq:POS.6}
x\in\Gamma_\kappa(z)\setminus K\,\,\text{ whenever }\,\,z\in\Delta(\pi(x),\kappa\varepsilon)
\end{equation}
where $\pi(x)$ is a point on $\partial\Omega$ with the property that ${\rm dist}(x,\partial\Omega)=|x-\pi(x)|$. This implies
\begin{equation}\label{eq:POS.7}
\begin{array}{c}
|G_{\Omega}(x,x_0)|\leq\big({\mathcal{N}}^{\Omega\setminus K}_\kappa G_{\Omega}(\cdot,x_0)\big)(z)\,\,\text{ for all points}
\\[4pt]
x\in\Omega\setminus K\,\,\text{ with }\,\,{\rm dist}(x,\partial\Omega)\geq\varepsilon\,\,
\text{ and }\,\,z\in\Delta(\pi(x),\kappa\varepsilon).
\end{array}
\end{equation}
Taking the integral average in $z$ over $\Delta(\pi(x),\kappa\varepsilon)$ and applying H\"older's inequality then shows that 
for each $x\in\Omega\setminus K$ with ${\rm dist}(x,\partial\Omega)\geq\varepsilon$ we have
\begin{align}\label{eq:POS.8}
|G_{\Omega}(x,x_0)|&\leq\fint_{\Delta(\pi(x),\kappa\varepsilon)}{\mathcal{N}}^{\Omega\setminus K}_\kappa G_{\Omega}(\cdot,x_0)\,d\sigma
\nonumber\\[4pt]
&\leq\Big(\fint_{\Delta(\pi(x),\kappa\varepsilon)}\big|{\mathcal{N}}^{\Omega\setminus K}_\kappa G_{\Omega}(\cdot,x_0)\big|^{p'}\,d\sigma\Big)^{1/p'}
\nonumber\\[4pt]
&\leq C\varepsilon^{-(n-1)/p'}\Big(\int_{\partial\Omega}{\mathbf{1}}_{\Delta(\pi(x),\kappa\varepsilon)}\cdot
\big|{\mathcal{N}}^{\Omega\setminus K}_\kappa G_{\Omega}(\cdot,x_0)\big|^{p'}\,d\sigma\Big)^{1/p'}.
\end{align}
In particular, from \eqref{eq:POS.8}, \eqref{Dir-Lap5.UNB}, and Lebesgue's Dominated Convergence Theorem we conclude that
\begin{equation}\label{eq:POS.9}
\parbox{9.80cm}{whenever $0<\varepsilon<R<\infty$ are fixed, if the points $\{x_j\}_{j\in{\mathbb{N}}}\subseteq\Omega\setminus K$ 
are such that $R\geq{\rm dist}(x_j,\partial\Omega)\geq\varepsilon$ for each index 
$j\in{\mathbb{N}}$ and also $\lim\limits_{j\to\infty}|x_j|=\infty$,
then one necessarily has $\lim\limits_{j\to\infty}G_{\Omega}(x_j,x_0)=0$.}
\end{equation}
Proceeding analogously to \eqref{eq:POS.8}, or directly invoking \cite[Lemma~8.6.6, p.\,742]{GHA.I}, 
also gives that for all $R\in(0,\infty)$ we have
\begin{equation}\label{eq:POS.10}
\sup_{\substack{x\in\Omega\setminus K\,\,\text{ with}\\{\rm dist}(x,\partial\Omega)\geq R}}|G_{\Omega}(x,x_0)|\leq CR^{-(n-1)/p'}
\big\|{\mathcal{N}}^{\Omega\setminus K}_\kappa G_{\Omega}(\cdot,x_0)\big\|_{L^{p'}(\partial\Omega,\sigma)},
\end{equation}
hence
\begin{equation}\label{eq:POS.10bis}
\lim_{R\to\infty}\sup_{\substack{x\in\Omega\setminus K\,\,\text{ with}\\{\rm dist}(x,\partial\Omega)=R}}|G_{\Omega}(x,x_0)|=0.
\end{equation}
Finally, the function $h:=G_{\Omega}(\cdot,x_0)+E_\Delta(\cdot-x_0)$ is harmonic near $x_0$, hence bounded near $x_0$, 
and so for each $x\in\partial B(x_0,r)$ with $r\in(0,1)$ we have
\begin{equation}\label{eq:POS.10bisbis}
G_{\Omega}(x,x_0)=-E_\Delta(x-x_0)+h(x)=
\left\{
\begin{array}{ll}
\frac{1}{(n-2)\omega_{n-1}}\frac{1}{r^{n-2}}+O(1) & \text{ if }\,\,n\geq 3,
\\[4pt]
\frac{1}{2\pi}\ln(1/r)+O(1) & \text{ if }\,\,n=2.
\end{array}
\right.
\end{equation}
This goes to show that 
\begin{equation}\label{eq:POS.10bisbisbis}
G_{\Omega}(x,x_0)>0\,\,\text{ for each $x\in\partial B(x_0,r)$ with $r\in(0,1)$ small}.
\end{equation}
The end-game in the proof of \eqref{eq:POS.11} is as follows. Pick a small radius $r\in\big(0,\,\tfrac{1}{4}{\rm dist}\,(x_0,\partial\Omega)\big)$ 
along with a threshold $\varepsilon\in\big(0,\,\tfrac{1}{4}{\rm dist}\,(x_0,\partial\Omega)\big)$ and some number $R>2{\rm dist}\,(x_0,\partial\Omega)$.
Having fixed a reference point $z_\ast\in\partial\Omega$ and a radius $\rho>2R$, define  
\begin{equation}\label{eq:D+M.Feb}
D_{r,\varepsilon,R,\rho}:=\Big(B(z_\ast,\rho)\setminus\overline{B(x_0,r)}\Big)\cap\Big({\mathcal{O}}_R\setminus\overline{{\mathcal{O}}_\varepsilon}\Big).
\end{equation}
This is an open bounded set whose closure is contained in $\Omega\setminus\{x_0\}$. In particular, $G(\cdot,x_0)$ is harmonic in 
$D_{r,\varepsilon,R,\rho}$ and continuous on its closure. As such, the Maximum Principle gives
\begin{equation}\label{eq:D+M.Feb.2}
\min_{x\in D_{r,\varepsilon,R,\rho}}G(x,x_0)=\min_{x\in\partial D_{r,\varepsilon,R,\rho}}G(x,x_0).
\end{equation}
Collectively, from \eqref{eq:POS.5bis}, \eqref{eq:POS.9}, \eqref{eq:POS.10bis}, \eqref{eq:POS.10bisbisbis}, 
we see that the limit of $\min_{x\in\partial D_{r,\varepsilon,R,\rho}}G(x,x_0)$ as $\rho\to\infty$, then $R\to\infty$, and finally 
$\varepsilon\to 0^{+}$, is nonnegative whenever $r$ is sufficiently small. This shows that $G_{\Omega}(\cdot,x_0)\geq 0$ in 
$\Omega\setminus\{x_0\}$. 
Since $\Omega\setminus\{x_0\}$ is an open connected set and the harmonic function $G_{\Omega}(\cdot,x_0)$ 
is not identically zero, the latter self-improves
to the positivity condition claimed in \eqref{eq:POS.11}.

Pressing on, recall the family of functions $\{\Phi_{\varepsilon}\}_{\varepsilon>0}$
associated with the set $\Omega$ as in Lemma~\ref{LhkC}. Next, fix a reference point $z\in\partial\Omega$, pick a nonnegative function 
$\Psi\in{\mathscr{C}}^\infty_c({\mathbb{R}}^n)$ with $\Psi\equiv 1$ on $B(0,1)$ and $\Psi\equiv 0$ on ${\mathbb{R}}^n\setminus B(0,2)$, 
then for each $R>0$ define $\Psi_R(x):=\Psi\big((x-z)/R\big)$ at each $x\in{\mathbb{R}}^n$. In particular, given any $\varepsilon>0$ and $R>0$, 
the support of the function $\Phi_{\varepsilon}\cdot\Psi_R$ is a compact subset of $\Omega$. As before, 
bring in a sequence $\{w_j\}_{j\in{\mathbb{N}}}$ 
such that for each $j\in{\mathbb{N}}$ the function $w_j$ belongs to ${\mathscr{C}}^{\infty}(\Omega_j)$, is subharmonic in 
$\Omega_j:=\{x\in\Omega:\,{\rm dist}(x,\partial\Omega)>1/j\}$ and satisfies $w_j\to w$ uniformly on compact subsets of 
$\Omega$ as $j\to\infty$.
With the Green function constructed above, we re-run the portion of the proof of Proposition~\ref{yrFCC} which starting with 
\eqref{Dir-Lap8.aaTa.1} has produced \eqref{eq:WACO.2025}, now considering the vector field 
\begin{align}\label{Dir-Lap8.aaTa.1.UNB}
\vec{F}(y) &:=(\Phi_{\varepsilon}\cdot\Psi_R)(y)w_j(y)\nabla_yG_{\Omega}(y,x_0)
\nonumber\\[6pt]
&\quad +w_j(y)G_{\Omega}(y,x_0)\nabla(\Phi_{\varepsilon}\cdot\Psi_R)(y)
\nonumber\\[6pt]
&\quad -(\Phi_{\varepsilon}\cdot\Psi_R)(y)G_{\Omega}(y,x_0)(\nabla w_j)(y),\qquad\forall\,y\in\Omega\setminus\{x_0\},
\end{align}
with $\varepsilon>0$ small, $R>0$ large, and $j\in{\mathbb{N}}$ such that $j>N/\varepsilon$. Once again, we are arrive at 
\eqref{Dir-Lap8.pLb.II}
with ${\rm I}_{\varepsilon,R}$ and ${\rm II}_{\varepsilon,R}$ as in \eqref{Dir-Lap8.pL.22.II}-\eqref{Dir-Lap8.pL.3.II}. 
It is at this stage that
special attention is needed. For one thing, in view of the fact that $\partial\Omega$ is no longer a compact set, 
the final equalities in 
\eqref{eq:kYGaff.1}-\eqref{eq:kYGaff.2} fail to materialize, so we shall have to work with the initial equalities instead. 
Second, we need to re-examine the terms obtained by expanding the derivatives in the context of ${\rm I}_{\varepsilon,R}$ 
and ${\rm II}_{\varepsilon,R}$ 
and produce suitable replacements of \eqref{Dir-Lap8.pL.22.III}-\eqref{Dir-Lap8.pL.22.VI} in the scenario in which the set 
$\partial\Omega$ is unbounded.
In the case of ${\rm I}_{\varepsilon,R}$ there are two such terms. For one of them we now employ Lemma~\ref{NEW.COL.LEMMA} to the 
function $u:={\mathbf{1}}_{\Omega\setminus K}\cdot\nabla G_{\Omega}(\cdot,x_0)$ to estimate (using notation introduced in its statement)
\begin{align}\label{Dir-Lap8.pL.22.III.UNB}
\Big|\int_{\Omega}\big\langle\nabla_yG_{\Omega}(y,x_0), & \Phi_{\varepsilon}(y)(\nabla\Psi_R)(y)\big\rangle\eta(y)\,dy\Big|
\nonumber\\[4pt]
&\leq CR^{-1}\int_{A_R}|\nabla_yG_{\Omega}(y,x_0)||\eta(y)|\,dy
\nonumber\\[4pt]
&\leq C\int_{\Delta(z,MR)\setminus\Delta(z,R)}{\mathcal{N}}^{\Omega\setminus K}_{\widetilde{\kappa}}\big(\nabla G_{\Omega}(\cdot,x_0)\big)
\cdot{\mathcal{N}}_{\widetilde{\kappa}}\eta\,d\sigma
\nonumber\\[4pt]
&=o(1)\,\,\text{ as }\,\,R\to\infty,
\end{align}
by Lebesgue's Dominated Convergence Theorem, since 
\begin{equation}\label{eq:uiYGGV.1}
\begin{array}{c}
{\mathcal{N}}^{\Omega\setminus K}_{\widetilde{\kappa}}\big(\nabla(G_{\Omega}(\cdot,x_0))\big)\in L^{p',1}(\partial\Omega,\sigma),
\quad{\mathcal{N}}_{\widetilde{\kappa}}\eta\in L^{p,\infty}(\partial\Omega,\sigma),
\\[4pt]
\text{and }\,\,L^{p',1}(\partial\Omega,\sigma)\cdot L^{p,\infty}(\partial\Omega,\sigma)\subseteq L^1(\partial\Omega,\sigma).
\end{array}
\end{equation}
The memberships in the first line of \eqref{eq:uiYGGV.1} are implied by \eqref{Dir-Lap5.UNB}, the fact that 
$\eta:=w-u$ now has ${\mathcal{N}}_{\kappa}\eta\in L^{p,\infty}(\partial\Omega,\sigma)$, and 
\cite[Proposition~8.4.1, p.\,698]{GHA.I}. The inclusion in the second line of \eqref{eq:uiYGGV.1}
is a consequence of (O'Neil's version of) H\"older's inequality for Lorentz spaces. 
The other term obtained by expending derivatives in the context of ${\rm I}_{\varepsilon,R}$ is handled by invoking Lemma~\ref{LhkC} and 
Proposition~\ref{kiGa-615} (with $p:=1$) which permit us to write 
\begin{align}\label{Dir-Lap8.pL.22.III.UNB.doi}
\Big|\int_{\Omega}\big\langle\nabla_yG_{\Omega}(y,x_0), & (\Psi_R)(y)(\nabla\Phi_{\varepsilon})(y)\big\rangle\eta(y)\,dy\Big|
\nonumber\\[4pt]
&\leq C\varepsilon^{-1}\int_{{\mathcal{O}}_\varepsilon}|\nabla_yG_{\Omega}(y,x_0)||\eta(y)|\,dy
\nonumber\\[4pt]
&\leq C\int_{\partial\Omega}{\mathcal{N}}^{\Omega\setminus K}_{\widetilde{\kappa}}\big(\nabla G_{\Omega}(\cdot,x_0)\big)
\cdot{\mathcal{N}}^\varepsilon_{\kappa}\eta\,d\sigma
\nonumber\\[4pt]
&=o(1)\,\,\text{ as }\,\,\varepsilon\to 0^{+},
\end{align}
by Lebesgue's Dominated Convergence Theorem, whose applicability is ensured by \eqref{eq:uiYGGV.1} and Proposition~\ref{nont-ind-11-PP} 
(bearing in mind that $\eta\big|^{{}^{\kappa-{\rm n.t.}}}_{\partial\Omega}=0$ at $\sigma$-a.e. point on $\partial\Omega$). 

In the case of ${\rm II}_{\varepsilon,R}$ there are three terms to consider (cf. \eqref{eq:kYGaff.2}), all of which 
require the pointwise estimate  
\begin{equation}\label{eq:YTFGF.644}
{\mathcal{N}}^{{\mathcal{O}}_{2R}\setminus K}_{\widetilde{\kappa}}\big(G_{\Omega}(\cdot,x_0)\big)
\leq CR\cdot{\mathcal{N}}^{\Omega\setminus K}_{\widetilde{\widetilde{\kappa}}}
\big(\nabla(G_{\Omega}(\cdot,x_0))\big)\,\,\text{ for each }\,\,R>0,
\end{equation}
valid for some $\widetilde{\widetilde{\kappa}}\in(0,\infty)$. This follows as in the proof of 
\cite[Proposition~8.9.17, pp.\,816-817]{GHA.I} with a minor adjustment. The only aspect which requires an adjustment is 
the ability of joining any point $y\in\Gamma_{\widetilde{\kappa}}(x)$ with $x$ via a rectifiable curve 
$\gamma\in\Gamma_{\widetilde{\widetilde{\kappa}}}(x)$ whose length is controlled by a fixed multiple of $|x-y|$.
Specifically, now starting with $y\in\Gamma_{\widetilde{\kappa}}(x)\setminus K$ we would like to ensure that 
$\gamma\in\Gamma_{\widetilde{\widetilde{\kappa}}}(x)\setminus K$ while retaining the aforementioned length condition. 
However, this is easily remedied since if there is a portion of $\gamma$ contained in $K$ we replace it with a suitable arc whose
length is under control. 

For one of these terms, we once more call upon Lemma~\ref{LhkC} and Proposition~\ref{kiGa-615} (with $p:=1$) to estimate  
\begin{align}\label{Dir-Lap8.pL.22.III.UNB.DOI}
\Big|\int_{\Omega}G_{\Omega}(y,x_0) & (\Psi_R)(y)(\Delta\Phi_{\varepsilon})(y)\eta(y)\,dy\Big|
\leq C\varepsilon^{-2}\int_{{\mathcal{O}}_\varepsilon}|G_{\Omega}(\cdot,x_0)||\eta|\,d{\mathcal{L}}^n
\nonumber\\[4pt]
&\leq C\varepsilon^{-1}\int_{\partial\Omega}{\mathcal{N}}^{{\mathcal{O}}_{\varepsilon}\setminus K}_{\widetilde{\kappa}}\big(G_{\Omega}(\cdot,x_0)\big)
\cdot{\mathcal{N}}^\varepsilon_{\widetilde{\kappa}}\eta\,d\sigma
\nonumber\\[4pt]
&\leq C\int_{\partial\Omega}{\mathcal{N}}^{\Omega\setminus K}_{\widetilde{\widetilde{\kappa}}}\big(\nabla(G_{\Omega}(\cdot,x_0))\big)
\cdot{\mathcal{N}}^\varepsilon_{\widetilde{\kappa}}\eta\,d\sigma
\nonumber\\[4pt]
&=o(1)\,\,\text{ as }\,\,\varepsilon\to 0^{+}.
\end{align}
The last equality is a consequence of Lebesgue's Dominated Convergence Theorem, whose applicability is guaranteed by 
Proposition~\ref{nont-ind-11-PP} (keeping in mind that $\eta\big|^{{}^{\kappa-{\rm n.t.}}}_{\partial\Omega}=0$ at $\sigma$-a.e. point on $\partial\Omega$), 
and the fact that, much as in \eqref{eq:uiYGGV.1}, we have
\begin{equation}\label{eq:uiYGGV.2}
\begin{array}{c}
{\mathcal{N}}^{\Omega\setminus K}_{\widetilde{\widetilde{\kappa}}}\big(\nabla(G_{\Omega}(\cdot,x_0))\big)\in L^{p',1}(\partial\Omega,\sigma),
\quad{\mathcal{N}}_{\widetilde{\kappa}}\eta\in L^{p,\infty}(\partial\Omega,\sigma),
\\[4pt]
\text{and }\,\,L^{p',1}(\partial\Omega,\sigma)\cdot L^{p,\infty}(\partial\Omega,\sigma)\subseteq L^1(\partial\Omega,\sigma).
\end{array}
\end{equation}
To treat the second term alluded to above, we once again rely on Lemma~\ref{NEW.COL.LEMMA} and then appeal to \eqref{eq:YTFGF.644} to write
\begin{align}\label{Dir-Lap8.pL.22.IV.UNB}
\Big|\int_{\Omega}G_{\Omega}(y,x_0) & \Phi_{\varepsilon}(y)(\Delta\Psi_R)(y)\eta(y)\,dy\Big|
\leq R^{-2}\int_{A_R}|G_{\Omega}(y,x_0)||\eta(y)|\,dy
\nonumber\\[4pt]
&\leq CR^{-1}\int_{\Delta(z,MR)\setminus\Delta(z,R)}{\mathcal{N}}^{{\mathcal{O}}_{2R}\setminus K}_{\widetilde{\kappa}}\big(G_{\Omega}(\cdot,x_0)\big)
\cdot{\mathcal{N}}_{\widetilde{\kappa}}\eta\,d\sigma
\nonumber\\[4pt]
&\leq C\int_{\Delta(z,MR)\setminus\Delta(z,R)}{\mathcal{N}}^{\Omega\setminus K}_{\widetilde{\widetilde{\kappa}}}
\big(\nabla(G_{\Omega}(\cdot,x_0))\big)\cdot{\mathcal{N}}_{\widetilde{\kappa}}\eta\,d\sigma
\nonumber\\[4pt]
&=o(1)\,\,\text{ as }\,\,R\to\infty,
\end{align}
by virtue of Lebesgue's Dominated Convergence Theorem and \eqref{eq:uiYGGV.2}.
Moreover, given that for large values of $R$ the function $\langle\nabla\Phi_\varepsilon,\nabla\Psi_R\rangle$ no longer vanishes 
identically on $\Omega$ (as it did in the case when $\partial\Omega$ was compact), we presently need to estimate an additional term, namely
\begin{align}\label{Dir-Lap8.pL.22.IV.UNB.new}
\Big|2\int_{\Omega}G_{\Omega}(y,x_0) & \langle\nabla\Phi_\varepsilon(y),\nabla\Psi_R(y)\rangle\eta(y)\,dy\Big|
\nonumber\\[4pt]
&\leq C(\varepsilon R)^{-1}\int_{A_R}{\mathbf{1}}_{{\mathcal{O}}_{\varepsilon}}(y)|G_{\Omega}(y,x_0)||\eta(y)|\,dy
\nonumber\\[4pt]
&\leq C\varepsilon^{-1}\int_{\Delta(z,MR)\setminus\Delta(z,R)}{\mathcal{N}}^\varepsilon_{\widetilde{\kappa}}\big(G_{\Omega}(\cdot,x_0)\big)
\cdot{\mathcal{N}}_{\widetilde{\kappa}}\eta\,d\sigma
\nonumber\\[4pt]
&\leq C\int_{\Delta(z,MR)\setminus\Delta(z,R)}{\mathcal{N}}^{\Omega\setminus K}_{\widetilde{\widetilde{\kappa}}}
\big(\nabla(G_{\Omega}(\cdot,x_0))\big)\cdot{\mathcal{N}}_{\widetilde{\kappa}}\eta\,d\sigma
\nonumber\\[4pt]
&=o(1)\,\,\text{ as }\,\,R\to\infty,
\end{align}
thanks to Lemma~\ref{NEW.COL.LEMMA}, \eqref{eq:YTFGF.644} (used with $R:=\varepsilon/2$), Lebesgue's Dominated Convergence Theorem, and \eqref{eq:uiYGGV.2}.

As in the end-game of the proof of Proposition~\ref{yrFCC}, from \eqref{Dir-Lap8.pL.22.III.UNB}, \eqref{Dir-Lap8.pL.22.III.UNB.doi}, 
\eqref{Dir-Lap8.pL.22.III.UNB.DOI}, \eqref{Dir-Lap8.pL.22.IV.UNB}, and \eqref{Dir-Lap8.pL.22.IV.UNB.new} we conclude that $\eta(x_0)\leq 0$, 
hence $w(x_0)\leq u(x_0)$. Given the arbitrariness of the point $x_0$, the desired conclusion follows. 
\end{proof}

In turn, Proposition~\ref{yrFCC.UND} is a key ingredient in the proof of the following version of Proposition~\ref{yrFCC-TT}.

\begin{proposition}\label{yrFCC-TT.UNB}
Let $\Omega$ be a $\delta$-{\rm AR} domain in $\mathbb{R}^{n}$ {\rm (}cf. Definition~\ref{def:USKT}{\rm )} 
where $\delta\in(0,1)$ is sufficiently small {\rm (}relative to the dimension $n$ and the Ahlfors regularity constants of $\partial\Omega${\rm )}.
Abbreviate $\sigma:={\mathcal{H}}^{n-1}\lfloor\partial\Omega$ and denote by $\nu$ the geometric measure 
theoretic outward unit normal to $\Omega$, canonically identified with a Clifford algebra-valued 
function on $\partial\Omega$ as in \eqref{utggGYHNN.iii}. Finally, fix an aperture parameter $\kappa>0$ and 
assume $u\in{\mathscr{C}}^{\,\infty}(\Omega)\otimes{\mathcal{C}}\!\ell_n$ is a function satisfying
\begin{equation}\label{ih6gAAA.UNB}
\begin{array}{c}
Du=0\,\,\text{ in }\,\,\Omega,\quad{\mathcal{N}}_{\kappa}u\in L^{1,\infty}(\partial\Omega,\sigma),\,\,\text{ and}
\\[4pt]
\text{the trace $u\big|^{{}^{\kappa-{\rm n.t.}}}_{\partial\Omega}$ exists 
{\rm (}in ${\mathcal{C}}\!\ell_n${\rm )} at $\sigma$-a.e. point on $\partial\Omega$}.
\end{array}
\end{equation} 

Then 
\begin{align}\label{ih6g6FFF.UNB}
{\mathcal{N}}_{\kappa}u\in L^{1}(\partial\Omega,\sigma)&\Longleftrightarrow
u\big|^{{}^{\kappa-{\rm n.t.}}}_{\partial\Omega}\in L^{1}(\partial\Omega,\sigma)\otimes{\mathcal{C}}\!\ell_n
\nonumber\\[6pt]
&\Longleftrightarrow\nu\odot\big(u\big|^{{}^{\kappa-{\rm n.t.}}}_{\partial\Omega}\big)\in 
H^{1}(\partial\Omega,\sigma)\otimes{\mathcal{C}}\!\ell_n
\nonumber\\[6pt]
&\Longleftrightarrow\nu\odot\big(u\big|^{{}^{\kappa-{\rm n.t.}}}_{\partial\Omega}\big)\in 
L^{1}(\partial\Omega,\sigma)\otimes{\mathcal{C}}\!\ell_n.
\end{align} 
In addition, if either of the above memberships materializes then
\begin{align}\label{ih6g6GGG.UNB}
&\|{\mathcal{N}}_{\kappa}u\|_{L^{1}(\partial\Omega,\sigma)}\approx
\big\|u\big|^{{}^{\kappa-{\rm n.t.}}}_{\partial\Omega}\big\|_{L^{1}(\partial\Omega,\sigma)\otimes{\mathcal{C}}\!\ell_n}
\\[6pt]
&\quad\approx\big\|\nu\odot\big(u\big|^{{}^{\kappa-{\rm n.t.}}}_{\partial\Omega}\big)
\big\|_{H^{1}(\partial\Omega,\sigma)\otimes{\mathcal{C}}\!\ell_n}
\approx\big\|\nu\odot\big(u\big|^{{}^{\kappa-{\rm n.t.}}}_{\partial\Omega}\big)
\big\|_{L^{1}(\partial\Omega,\sigma)\otimes{\mathcal{C}}\!\ell_n}
\nonumber
\end{align} 
where the implicit proportionality constants are independent of $u$.
\end{proposition}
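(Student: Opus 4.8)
The plan is to mirror the structure of the proof of Proposition~\ref{yrFCC-TT} as closely as possible, substituting the compact-boundary tools with their $\delta$-{\rm AR} analogues. As in the bounded case, the existence of the nontangential trace $u\big|^{{}^{\kappa-{\rm n.t.}}}_{\partial\Omega}$ is hypothesized outright in \eqref{ih6gAAA.UNB} (unlike in Proposition~\ref{yrFCC-TT}, where it was deduced from compactness via a passage to $L^p$ with $p<1$); the non-compactness of $\partial\Omega$ prevents that embedding, which is precisely why this hypothesis must now be imposed directly. The right-pointing implication in the first line of \eqref{ih6g6FFF.UNB} and the matching half of \eqref{ih6g6GGG.UNB} are again immediate from \eqref{abxi-gEE.u6f} together with Proposition~\ref{nont-ind-11-PP} (whose hypotheses—lower Ahlfors regularity, $\mathcal{H}^{n-1}(\partial\Omega\setminus\partial_\ast\Omega)=0$, doubling $\sigma$—hold for a $\delta$-{\rm AR} domain).

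The heart of the argument is the left-pointing implication in the first line of \eqref{ih6g6FFF.UNB}. Assuming $u\big|^{{}^{\kappa-{\rm n.t.}}}_{\partial\Omega}\in L^{1}(\partial\Omega,\sigma)\otimes{\mathcal{C}}\!\ell_n$, I would again invoke the subharmonicity of $w:=|u|^\theta$ for a purely dimensional $\theta\in(0,1)$ (e.g.\ $\theta\geq\frac{n-2}{n-1}$; cf.\ \cite{SW2}, \cite[Theorem~3.35]{GiMu91}), so that ${\mathcal{N}}_{\kappa}w=({\mathcal{N}}_{\kappa}u)^\theta\in L^{1/\theta,\infty}(\partial\Omega,\sigma)$ and $w\big|^{{}^{\kappa-{\rm n.t.}}}_{\partial\Omega}=\big|u\big|^{{}^{\kappa-{\rm n.t.}}}_{\partial\Omega}\big|^\theta=:f\in L^{1/\theta}(\partial\Omega,\sigma)$. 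The crucial substitution is this: on the non-compact boundary one may no longer pass from the weak space $L^{1/\theta,\infty}$ to a genuine $L^p$ globally, so instead of solving an $L^p$-Dirichlet problem I would solve the Dirichlet problem for the Laplacian with datum $f$ in the \emph{weak} Lebesgue class—appealing to the solvability result for $\delta$-{\rm AR} domains (the same circle of results from \cite{GHA.V}, \cite{M5} underpinning the Green-function construction in Proposition~\ref{yrFCC.UND}, namely \cite[Theorem~8.4.1]{GHA.V} and the companion statements in \cite{M5})—obtaining a harmonic $v$ with ${\mathcal{N}}_{\kappa}v\in L^{1/\theta,\infty}(\partial\Omega,\sigma)$, with $v\big|^{{}^{\kappa-{\rm n.t.}}}_{\partial\Omega}=f$, with $v$ controlled at infinity compatibly with \eqref{ih6g6gfr5.UND}, and with the norm estimate $\|{\mathcal{N}}_{\kappa}v\|_{L^{1/\theta,\infty}}\leq C\|f\|_{L^{1/\theta,\infty}}$. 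Then Proposition~\ref{yrFCC.UND} (applied with the exponent $p:=1/\theta$, which exceeds $1$ as required, and with $v$ playing the role of the harmonic function there) yields $0\leq w\leq v$ in $\Omega$, hence $0\leq{\mathcal{N}}_{\kappa}w\leq{\mathcal{N}}_{\kappa}v$ pointwise on $\partial\Omega$; raising to the power $1/\theta$ and integrating gives, via the weak-type bound on $v$ and real-interpolation-type control relating $\|\,\cdot\,\|_{L^1}$ of a $\theta$-power to the $L^{1/\theta,\infty}$-quasinorm when combined with the matching datum estimate,
\begin{equation*}
\int_{\partial\Omega}{\mathcal{N}}_{\kappa}u\,d\sigma=\int_{\partial\Omega}\big({\mathcal{N}}_{\kappa}w\big)^{1/\theta}\,d\sigma\leq\int_{\partial\Omega}\big({\mathcal{N}}_{\kappa}v\big)^{1/\theta}\,d\sigma\leq C\int_{\partial\Omega}|f|^{1/\theta}\,d\sigma=C\big\|u\big|^{{}^{\kappa-{\rm n.t.}}}_{\partial\Omega}\big\|_{L^1(\partial\Omega,\sigma)\otimes{\mathcal{C}}\!\ell_n}<\infty.
\end{equation*}
This closes the first equivalence quantitatively.

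With the first line of \eqref{ih6g6FFF.UNB} and the first norm-equivalence in \eqref{ih6g6GGG.UNB} in hand, the remaining equivalences follow exactly as in the bounded case: the implication passing to the $H^1$-membership of $\nu\odot\big(u\big|^{{}^{\kappa-{\rm n.t.}}}_{\partial\Omega}\big)$ comes from the general statement \eqref{u6ggBly5-HAR.HHH.LL}--\eqref{u6ggBly5-HAR.HHH.jj} (valid on any Ahlfors regular domain, in particular a $\delta$-{\rm AR} domain, which by \cite[Theorem~3.1.5]{GHA.V}, \cite[Theorem~2.5]{M5} is a {\rm UR} domain so the Cauchy--Clifford theory applies), the descent from $H^1$ to $L^1$ from the continuous embedding \eqref{u7yggGFDD.gffc.IIII.L1}, and the return from $\nu\odot\big(u\big|^{{}^{\kappa-{\rm n.t.}}}_{\partial\Omega}\big)\in L^1$ back to $u\big|^{{}^{\kappa-{\rm n.t.}}}_{\partial\Omega}\in L^1$ from the identity $\nu\odot\nu=-1$ (cf.\ \eqref{X-sqr}); chaining these gives the full cycle. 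The main obstacle I anticipate is the Dirichlet-solvability step in weak Lebesgue (equivalently Lorentz $L^{1/\theta,\infty}$) classes together with the matching quantitative norm control: one must verify that the results of \cite{GHA.V}, \cite{M5} genuinely cover data in this scale (or interpolate/approximate to get there) and that the comparison in Proposition~\ref{yrFCC.UND}—which is stated for $L^{p,\infty}$ nontangential control precisely to accommodate this—can be fed the constructed $v$; a secondary technical point is handling the behavior at infinity of $w$ and $v$ so that hypothesis \eqref{ih6g6gfr5.UND} of Proposition~\ref{yrFCC.UND} is met, which for an unbounded $\delta$-{\rm AR} domain requires care but should follow from the decay built into the Dirichlet solution and the finiteness of the relevant nontangential maximal functions.
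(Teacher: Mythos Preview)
Your overall architecture is correct and matches the paper's: follow the proof of Proposition~\ref{yrFCC-TT}, replacing Proposition~\ref{yrFCC} by Proposition~\ref{yrFCC.UND}, and thereby bypass the compactness-dependent embeddings \eqref{ih6g6Fii.b}--\eqref{ih6g6Fii.c} and \eqref{ih6g6Fii.d}. However, there is a genuine gap in the way you implement the key substitution.

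You propose to solve the Dirichlet problem for the datum $f=\big|u\big|^{{}^{\kappa-{\rm n.t.}}}_{\partial\Omega}\big|^\theta$ in the \emph{weak} Lebesgue class, obtaining $v$ with ${\mathcal{N}}_\kappa v\in L^{1/\theta,\infty}(\partial\Omega,\sigma)$ only. That is an unnecessary weakening, and it prevents you from closing the last displayed chain: from ${\mathcal{N}}_\kappa v\in L^{1/\theta,\infty}$ you cannot deduce $\int_{\partial\Omega}({\mathcal{N}}_\kappa v)^{1/\theta}\,d\sigma<\infty$, and no ``real-interpolation-type control'' will manufacture a strong $L^{1/\theta}$ bound from a weak one. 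The datum $f$ already lies in the \emph{strong} space $L^{1/\theta}(\partial\Omega,\sigma)$ (since the trace is in $L^1$), so the correct move---and the one the paper makes implicitly---is to solve the ordinary $L^{1/\theta}$-Dirichlet problem in the $\delta$-{\rm AR} domain (cf.\ \cite[Theorem~8.1.6]{GHA.V} and the companion results in \cite{M5}), obtaining ${\mathcal{N}}_\kappa v\in L^{1/\theta}(\partial\Omega,\sigma)$ with $\|{\mathcal{N}}_\kappa v\|_{L^{1/\theta}}\leq C\|f\|_{L^{1/\theta}}$. Since $L^{1/\theta}\subset L^{1/\theta,\infty}$, Proposition~\ref{yrFCC.UND} applies with $p=1/\theta$ (the subharmonic $w$ has ${\mathcal{N}}_\kappa w\in L^{1/\theta,\infty}$, the harmonic $v$ has ${\mathcal{N}}_\kappa v\in L^{1/\theta}\subset L^{1/\theta,\infty}$), giving $0\leq w\leq v$; then the chain $\int({\mathcal{N}}_\kappa w)^{1/\theta}\leq\int({\mathcal{N}}_\kappa v)^{1/\theta}\leq C\int|f|^{1/\theta}$ closes exactly as in \eqref{ih6g6FiKH.22}. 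The whole point of stating Proposition~\ref{yrFCC.UND} with $L^{p,\infty}$ control is to accommodate the \emph{subharmonic} $w$, not the harmonic $v$.

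Two minor remarks. First, your worry about ``behavior at infinity'' for $w$ and $v$ is misplaced: Proposition~\ref{yrFCC.UND} imposes no decay hypothesis (that was only needed in the exterior-domain branch of Proposition~\ref{yrFCC}); hypothesis \eqref{ih6g6gfr5.UND} is purely about matching nontangential traces. Second, your closing cycle through \eqref{u6ggBly5-HAR.HHH.LL}--\eqref{u6ggBly5-HAR.HHH.jj}, \eqref{u7yggGFDD.gffc.IIII.L1}, and $\nu\odot\nu=-1$ is correct and is exactly how the paper finishes.
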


\begin{proof}
We reason much as in the proof of Proposition~\ref{yrFCC-TT}, now making use of Proposition~\ref{yrFCC.UND} in place of 
Proposition~\ref{yrFCC}. This permits us to carry out the same argument without having to resort to 
\eqref{ih6g6Fii.b}-\eqref{ih6g6Fii.c} and \eqref{ih6g6Fii.d} (which were crucially reliant on the boundedness of $\partial\Omega$). 
\end{proof}

The stage is now set for extending the scope of Theorem~\ref{yrFCC-TT.aat.TRa} by now allowing $\delta$-{\rm AR} domains in $\mathbb{R}^{n}$ 
with $\delta\in(0,1)$ sufficiently small. 

\begin{theorem}\label{yrFCC-TT.aat.TRa.UNB}
Suppose $\Omega$ is a $\delta$-{\rm AR} domain in $\mathbb{R}^{n}$ {\rm (}cf. Definition~\ref{def:USKT}{\rm )} 
where $\delta\in(0,1)$ is sufficiently small {\rm (}relative to the dimension $n$ and the Ahlfors regularity constants of $\partial\Omega${\rm )}, 
and set $\sigma:={\mathcal{H}}^{n-1}\lfloor\partial\Omega$. Bring in the Riesz transforms 
$\{R_j\}_{1\leq j\leq n}$ associated with the {\rm UR} set $\partial\Omega$ as in \eqref{Cau-RRj}, and  
glue them together into a singular integral operator acting on functions in $L^1(\partial\Omega,\sigma)\otimes{\mathcal{C}}\!\ell_n$ 
as in \eqref{utggG-TRFF}. Then the following statements are equivalent {\rm (}in a natural quantitative fashion{\rm )}:

\begin{enumerate}
\item[(1)] $f\in L^{1}(\partial\Omega,\sigma)\otimes{\mathcal{C}}\!\ell_n$ and 
$R_jf\in L^{1}(\partial\Omega,\sigma)\otimes{\mathcal{C}}\!\ell_n$ for $1\leq j\leq n$;
\item[(2)] $f\in L^{1}(\partial\Omega,\sigma)\otimes{\mathcal{C}}\!\ell_n$ and 
$Rf\in L^{1}(\partial\Omega,\sigma)\otimes{\mathcal{C}}\!\ell_n$;
\item[(3)] $f\in H^{1}(\partial\Omega,\sigma)\otimes{\mathcal{C}}\!\ell_n$.
\end{enumerate}
\end{theorem}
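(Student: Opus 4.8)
The plan is to run the argument of Theorem~\ref{yrFCC-TT.aat.TRa} with each compact-boundary ingredient replaced by its $\delta$-{\rm AR} counterpart; the genuinely new analysis has already been isolated in Proposition~\ref{yrFCC.UND} (a comparison principle between subharmonic and harmonic functions on $\delta$-{\rm AR} domains, phrased with \emph{weak} Lebesgue control of nontangential maximal functions and with no decay-at-infinity hypothesis) and in its Clifford-analytic consequence Proposition~\ref{yrFCC-TT.UNB}. First I would record the $\delta$-{\rm AR} analogue of Theorem~\ref{yrFCC-TT.aat.2}: if $\Omega$ is a $\delta$-{\rm AR} domain with $\delta\in(0,1)$ small enough and $f\in L^1(\partial\Omega,\sigma)\otimes{\mathcal{C}}\!\ell_n$, then $\mathfrak{C}f\in L^1(\partial\Omega,\sigma)\otimes{\mathcal{C}}\!\ell_n$ if and only if $\nu\odot f\in H^1(\partial\Omega,\sigma)\otimes{\mathcal{C}}\!\ell_n$, quantitatively. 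Its proof is a transcription of the proof of Theorem~\ref{yrFCC-TT.aat.2}: with $\Omega_{\pm}$ as in \eqref{eq:py5r42rf} and $u_{\pm}$ as in \eqref{Cau-C1.iii.UUU}, the properties \eqref{dble-layer-PDE-cc}, \eqref{eq:u7766Rdqarf}, \eqref{7hg7gv-VVV.h} and the jump relation \eqref{R=R-cc} all hold on {\rm UR} domains (and $\delta$-{\rm AR} domains are {\rm UR} by \cite[Theorem~3.1.5]{GHA.V}, \cite[Theorem~2.5]{M5}), while ${\mathbb{R}}^n\setminus\overline{\Omega}$ is again a $\delta$-{\rm AR} domain: being a two-sided {\rm NTA} domain it shares the boundary $\partial\Omega$ and the measure $\sigma$ with $\Omega$, and its geometric measure theoretic outward unit normal is $-\nu$, of the same {\rm BMO} seminorm. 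Hence Proposition~\ref{yrFCC-TT.UNB} may be applied to each of $\Omega_{+}=\Omega$ and $\Omega_{-}$ exactly where the proof of Theorem~\ref{yrFCC-TT.aat.2} invokes Proposition~\ref{yrFCC-TT}. Since $\partial\Omega$ is now unbounded, the steps \eqref{ih6g6Fii.b}--\eqref{ih6g6Fii.c} and \eqref{ih6g6Fii.d}, which relied on finiteness of $\sigma$, are not needed here, exactly as in Proposition~\ref{yrFCC-TT.UNB}.

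Granted this, the three equivalences follow as in Theorem~\ref{yrFCC-TT.aat.TRa}. The implication $(1)\Rightarrow(2)$ is immediate from \eqref{utggG-TRFF}. For $(2)\Rightarrow(3)$, given $f$ as in $(2)$ I would set $g:=\nu\odot f\in L^1(\partial\Omega,\sigma)\otimes{\mathcal{C}}\!\ell_n$; then $\nu\odot g=-f$ since $\nu\odot\nu=-1$ (cf. \eqref{X-sqr}), and the algebraic identity $\mathfrak{C}g=\tfrac12 R(\nu\odot g)$ --- a consequence of \eqref{Cau-RRj}, \eqref{utggG-TRFF}, \eqref{Cau-C1.iii.UUU} and the embedding \eqref{embed}, valid $\sigma$-a.e. because the relevant principal values exist $\sigma$-a.e. on {\rm UR} sets for $L^1$ densities --- gives $\mathfrak{C}g=-\tfrac12 Rf\in L^1(\partial\Omega,\sigma)\otimes{\mathcal{C}}\!\ell_n$. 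The $\delta$-{\rm AR} form of Theorem~\ref{yrFCC-TT.aat.2} established above then yields $\nu\odot g\in H^1(\partial\Omega,\sigma)\otimes{\mathcal{C}}\!\ell_n$, i.e. $f=-\nu\odot g\in H^1(\partial\Omega,\sigma)\otimes{\mathcal{C}}\!\ell_n$, which is $(3)$. Finally $(3)\Rightarrow(1)$ follows from the continuous embedding $H^1(\partial\Omega,\sigma)\hookrightarrow L^1(\partial\Omega,\sigma)$ (valid for any space of homogeneous type; cf. \eqref{u7yggGFDD.gffc.IIII.L1}) together with the boundedness of each $R_j$ from $H^1(\partial\Omega,\sigma)$ into $L^1(\partial\Omega,\sigma)$ on {\rm UR} sets (\cite[Theorem~2.3.2, item~(6)]{GHA.III}). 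Tracking the operator norms through each step gives the quantitative version of the equivalence.

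The hard part will not be in this final deduction, which is formal once the tools are in hand, but in Propositions~\ref{yrFCC.UND} and~\ref{yrFCC-TT.UNB}: on a non-compact boundary one cannot embed $L^{1,\infty}(\partial\Omega,\sigma)$ into a global $L^p$ space, so the comparison principle has to be run under weak-type hypotheses; the contribution at spatial infinity of the cut-off $\Psi_R$ must be controlled by the collar estimate of Lemma~\ref{NEW.COL.LEMMA} rather than by an a priori decay assumption; and one must construct a Green function with the appropriate H\"older regularity up to $\partial\Omega$ and $L^{p',1}$-control of its gradient, which in turn rests on the solvability of the regularity problem for the Laplacian in $\delta$-{\rm AR} domains with $\delta$ suitably small. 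Once those are available, the present theorem is obtained, as above, by transcribing the compact-boundary argument of Theorem~\ref{yrFCC-TT.aat.TRa}.
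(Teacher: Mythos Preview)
Your proposal is correct and follows essentially the same route as the paper: first obtain the $\delta$-{\rm AR} analogue of Theorem~\ref{yrFCC-TT.aat.2} by invoking Proposition~\ref{yrFCC-TT.UNB} on both $\Omega_{+}$ and $\Omega_{-}$ (the latter being again a $\delta$-{\rm AR} domain with outward normal $-\nu$), then transcribe the proof of Theorem~\ref{yrFCC-TT.aat.TRa} verbatim to get the three equivalences. Your identification of where the genuine work lies (Propositions~\ref{yrFCC.UND} and~\ref{yrFCC-TT.UNB}, in particular the need for weak-$L^p$ hypotheses and Lemma~\ref{NEW.COL.LEMMA} to handle the non-compact boundary) matches the paper's own emphasis.
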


\begin{proof}
Once Proposition~\ref{yrFCC-TT.UNB} has been established, we can produce natural counterparts of 
Theorem~\ref{yrFCC-TT.aat} and Theorem~\ref{yrFCC-TT.aat.2} in the class of $\delta$-{\rm AR} domains with $\delta\in(0,1)$ sufficiently small
by reasoning in exactly the same fashion as before. In turn, the aforementioned versions of these theorems may be now employed to obtain all 
conclusions we presently seek by repeating the proof of Theorem~\ref{yrFCC-TT.aat.TRa} verbatim. 
\end{proof}

We are now prepared to present the following version of Theorem~\ref{u6g5rfffc} in the category of $\delta$-{\rm AR} domains in ${\mathbb{R}}^n$ 
with $\delta\in(0,1)$ sufficiently small, which contains as a particular case (corresponding to the scenario in which the domain in question 
is simply the upper-half space ${\mathbb{R}}^n_{+}$) the classical Fefferman-Stein representation theorem (cf. \cite[Theorem~3, p.\,145]{FeSt72}).

\begin{theorem}\label{u6g5rfffc.UNB}
Let $\Omega$ be a $\delta$-{\rm AR} domain in $\mathbb{R}^{n}$ {\rm (}cf. Definition~\ref{def:USKT}{\rm )} 
where $\delta\in(0,1)$ is sufficiently small {\rm (}relative to the dimension $n$ and the Ahlfors regularity constants of $\partial\Omega${\rm )}, 
and abbreviate $\sigma:={\mathcal{H}}^{n-1}\lfloor\partial\Omega$. Recall the modified Riesz transforms 
$\{R^{{}^{\rm mod}}_j\}_{1\leq j\leq n}$ associated with the {\rm UR} set $\partial\Omega$ as in \eqref{eq:MOD.RZ.1}-\eqref{eq:MOD.RZ.2}.
Then each $f\in{\rm BMO}(\partial\Omega,\sigma)$ may be expressed as 
\begin{equation}\label{utggG-TRFF.rt.A.UNB}
f=f_0+\sum_{j=1}^n R^{{}^{\rm mod}}_jf_j\,\,\text{ on }\,\,\partial\Omega, 
\end{equation}
for some $f_0,f_1,\dots,f_n\in L^\infty(\partial\Omega,\sigma)$ with natural quantitative control. 
\end{theorem}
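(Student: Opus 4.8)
The plan is to follow the blueprint of the proof of Theorem~\ref{u6g5rfffc}, replacing its two structural inputs—the scalar Riesz-transform characterization of $H^1$ and the $(H^1)^\ast={\rm BMO}$ duality—by their counterparts in the present, more general, geometric setting, and to account for the one genuinely new feature: over an unbounded boundary the operators $R_j$ from \eqref{Cau-RRj} do not act on bounded functions, so the transposition step that identifies the ${\rm BMO}$ representative must be carried out against the \emph{modified} operators $R^{{}^{\rm mod}}_j$ from \eqref{eq:MOD.RZ.1}--\eqref{eq:MOD.RZ.2}. Throughout, $\partial\Omega$ equipped with $\sigma$ and the Euclidean distance is a space of homogeneous type, so the Coifman--Weiss duality $\big(H^{1}(\partial\Omega,\sigma)\big)^\ast={\rm BMO}(\partial\Omega,\sigma)$ is available (cf.\ \cite[Theorem~4.6.1, pp.\,182--184]{GHA.II}), and the scalar specialization of Theorem~\ref{yrFCC-TT.aat.TRa.UNB} yields \eqref{7ggVV.UNB}, i.e.\ for $g_0\in L^{1}(\partial\Omega,\sigma)$ one has $g_0\in H^{1}(\partial\Omega,\sigma)$ if and only if $R_jg_0\in L^{1}(\partial\Omega,\sigma)$ for all $j\in\{1,\dots,n\}$.

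First I would set up, exactly as in the proof of Theorem~\ref{u6g5rfffc}, the Banach space ${\mathscr{V}}$ consisting of $n+1$ copies of $L^{1}(\partial\Omega,\sigma)$ with the sum norm, together with its closed linear subspace ${\mathscr{W}}:=\{(g_0,g_1,\dots,g_n)\in{\mathscr{V}}:\,g_j=R_jg_0\ \text{for}\ 1\le j\le n\}$; the characterization \eqref{7ggVV.UNB} says precisely that $H^{1}(\partial\Omega,\sigma)\ni g_0\mapsto(g_0,R_1g_0,\dots,R_ng_0)\in{\mathscr{W}}$ is a Banach-space isomorphism. Given $f\in{\rm BMO}(\partial\Omega,\sigma)$, regard it via duality as $\Lambda\in\big(H^{1}(\partial\Omega,\sigma)\big)^\ast$, transport $\Lambda$ through this isomorphism to a bounded functional on ${\mathscr{W}}$, and invoke the Hahn--Banach theorem to extend it to $\widetilde\Lambda\in{\mathscr{V}}^\ast$, i.e.\ to a tuple $(f_0,f_1,\dots,f_n)$ of functions in $L^{\infty}(\partial\Omega,\sigma)$ with
\[
\|f_0\|_{L^{\infty}(\partial\Omega,\sigma)}+\sum_{j=1}^{n}\|f_j\|_{L^{\infty}(\partial\Omega,\sigma)}\le C\|f\|_{{\rm BMO}(\partial\Omega,\sigma)}
\]
and
\[
\Lambda(h)=\int_{\partial\Omega}hf_0\,d\sigma-\sum_{j=1}^{n}\int_{\partial\Omega}(R_jh)f_j\,d\sigma\qquad\text{for every }h\in H^{1}(\partial\Omega,\sigma).
\]

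The crux is then the transposition identity
\[
\int_{\partial\Omega}(R_jh)\,g\,d\sigma=-\int_{\partial\Omega}h\,\big(R^{{}^{\rm mod}}_jg\big)\,d\sigma,\qquad h\in H^{1}(\partial\Omega,\sigma),\ \ g\in L^{\infty}(\partial\Omega,\sigma),
\]
which pairs $R_j\colon H^{1}\to L^{1}$ with $R^{{}^{\rm mod}}_j\colon L^{\infty}\to{\rm BMO}$ and plays the role that the anti-Hermitian property of the Riesz transforms played in the compact case. It suffices to verify it when $h$ is a finite linear combination of $H^{1}$-atoms—a dense class in $H^{1}(\partial\Omega,\sigma)$—since both sides depend continuously on $h$ (the left by boundedness of $R_j$ from $H^{1}$ to $L^{1}$, cf.\ \cite[Theorem~2.3.2, item~(6)]{GHA.III}; the right because $R^{{}^{\rm mod}}_jg\in{\rm BMO}(\partial\Omega,\sigma)=\big(H^{1}(\partial\Omega,\sigma)\big)^\ast$, the analogue over ${\rm AR}$ boundaries of the mapping property underlying \eqref{u6gGVV.2}). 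For such an $h$, which has compact support and $\int_{\partial\Omega}h\,d\sigma=0$, exploiting the cancellation of the atoms makes all the relevant double integrals converge, Fubini applies, the renormalizing term $k_{j,1}(-y)$ in the definition of $R^{{}^{\rm mod}}_j$ contributes $\big(\int_{\partial\Omega}h\,d\sigma\big)\big(\tfrac{2}{\omega_{n-1}}\int_{\partial\Omega}\tfrac{-y_j}{|y|^{n}}{\mathbf 1}_{|y|>1}\,g(y)\,d\sigma(y)\big)=0$, and the oddness of $z\mapsto z_j/|z|^{n}$ supplies the sign flip. Feeding this into the formula for $\Lambda$ gives, for $h$ in the dense class and hence for all $h\in H^{1}(\partial\Omega,\sigma)$,
\[
\Lambda(h)=\int_{\partial\Omega}h\,\Big(f_0+\sum_{j=1}^{n}R^{{}^{\rm mod}}_jf_j\Big)\,d\sigma,
\]
so $f=f_0+\sum_{j=1}^{n}R^{{}^{\rm mod}}_jf_j$ as elements of ${\rm BMO}(\partial\Omega,\sigma)=\big(H^{1}(\partial\Omega,\sigma)\big)^\ast$, with the quantitative control inherited from the Hahn--Banach step, which is the assertion \eqref{utggG-TRFF.rt.A.UNB}; the right-hand side is a bona fide ${\rm BMO}$ function precisely because $R^{{}^{\rm mod}}_j$ maps $L^{\infty}(\partial\Omega,\sigma)$ boundedly into ${\rm BMO}(\partial\Omega,\sigma)$ in this geometric setting.

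I expect the main obstacle to be the rigorous justification of the transposition identity together with the boundedness of $R^{{}^{\rm mod}}_j\colon L^{\infty}(\partial\Omega,\sigma)\to{\rm BMO}(\partial\Omega,\sigma)$ over a general unbounded ${\rm AR}$ (indeed $\delta$-${\rm AR}$) boundary, where the global integrability used so freely in the compact case is unavailable; the renormalization by $k_{j,1}(-y)$ is exactly what repairs this, but its interplay with the atomic decomposition of $H^{1}$ and with the $({\rm VMO},H^{1},{\rm BMO})$ triple has to be handled carefully, presumably by importing the relevant ${\rm UR}$-setting results from \cite{GHA.III}--\cite{GHA.IV}.
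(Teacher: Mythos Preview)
Your proposal is correct and follows essentially the same approach as the paper's proof: both run the Hahn--Banach argument of Theorem~\ref{u6g5rfffc} verbatim, replacing the $H^1$-characterization by Theorem~\ref{yrFCC-TT.aat.TRa.UNB} and the anti-Hermitian identity by the transposition between $R_j$ and $R^{{}^{\rm mod}}_j$. The only difference is cosmetic: the paper cites this transposition identity directly from \cite[(2.3.40), p.\,353]{GHA.III} (stated there for $f\in{\rm BMO}$ and $g\in H^1$), whereas you sketch its proof via the atomic decomposition and the mean-zero cancellation that kills the renormalizing term---which is exactly how such identities are established in the references you anticipate invoking.
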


\begin{proof}
We follow the proof of Theorem~\ref{u6g5rfffc} mutatis mutandis, now employing Theorem~\ref{yrFCC-TT.aat.TRa.UNB} in lieu of 
Theorem~\ref{yrFCC-TT.aat.TRa}, and relying on the duality result 
\begin{equation}\label{eq:Dual.333}
{}_{{\rm BMO}(\partial\Omega,\sigma)}\big\langle R^{{}^{\rm mod}}_jf,g\big\rangle_{H^1(\partial\Omega,\sigma)}
=\int_{\partial\Omega}fR_jg\,d\sigma
\end{equation}
valid for each $j\in\{1,\dots,n\}$, each $f\in{\rm BMO}(\partial\Omega,\sigma)$, and each $g\in H^1(\partial\Omega,\sigma)$
(see \cite[(2.3.40), p.\,353]{GHA.III}).
\end{proof}

Finally, here is the proof of Theorem~\ref{u6g5rfffc-CCC.1111.UNB}:

\vskip 0.08in
\begin{proof}[Proof of Theorem~\ref{u6g5rfffc-CCC.1111.UNB}]
First, \eqref{7ggVV.UNB} is seen from the equivalence (1) $\Leftrightarrow$ (2) in Theorem~\ref{yrFCC-TT.aat.TRa.UNB} 
(written for scalar-valued functions). Next, the left-to-right inclusion in \eqref{utggG-TRFF.rt.A-CCC.UNB}
is implied by Theorem~\ref{u6g5rfffc.UNB}, while the right-to-left inclusion in \eqref{utggG-TRFF.rt.A-CCC.UNB} 
is deduced from \cite[(2.3.38), p.\,353]{GHA.III}.
\end{proof}

\end{document}